\documentclass[onefignum,onetabnum]{siamart250211}

\headers{Local error estimates for (non-)linear stabilization FEM}{E. Burman, F. Heimann}

\title{Local error estimates for a finite element method combining linear and nonlinear stabilization for the linear hyperbolic transport equation\thanks{Submitted to the editors DATE.
}}

\author{Erik Burman\thanks{Department of Mathematics, University College London, UK, \texttt{e.burman@ucl.ac.uk}} \and
Fabian Heimann\thanks{Department of Mathematics, University College London, UK, \texttt{f.heimann@ucl.ac.uk}}}

\usepackage{lipsum}
\usepackage{amsfonts}
\usepackage{amsmath}
\usepackage{amssymb}
\usepackage{nicefrac}
\usepackage{graphicx}
\usepackage[caption=false]{subfig}
\usepackage{epstopdf}
\usepackage{algorithmic}
\ifpdf
  \DeclareGraphicsExtensions{.eps,.pdf,.png,.jpg}
\else
  \DeclareGraphicsExtensions{.eps}
\fi

\usepackage[normalem]{ulem}

\newsiamremark{remark}{Remark}
\newsiamremark{assumption}{Assumption}
\crefname{assumption}{Assumption}{Assumptions}
\newsiamremark{hypothesis}{Hypothesis}
\crefname{hypothesis}{Hypothesis}{Hypotheses}
\newsiamthm{claim}{Claim}
\newsiamthm{conjecture}{Conjecture}
\crefname{claim}{Claim}{Claims}
\crefname{conjecture}{Conjecture}{Conjectures}
\newsiamthm{attempt}{Attempt}

\usepackage{amsopn}

\usepackage[]{hyperref}
\usepackage[capitalize]{cleveref}

\providecommand{\vertiii}[1]{{\left\vert\kern-0.25ex\left\vert\kern-0.25ex\left\vert #1
\right\vert\kern-0.25ex\right\vert\kern-0.25ex\right\vert}}

\usepackage{tikz}
\usepackage{tikz-3dplot}
\usetikzlibrary{fpu,positioning,shapes,arrows,calc, arrows.meta, patterns, intersections, decorations.pathreplacing,calligraphy}
\usetikzlibrary{shapes,arrows,positioning,fit,calc,spy, colorbrewer}

\usepackage{pgfplots}
\usepgfplotslibrary{colorbrewer,groupplots}

\pgfplotsset{
cycle list/Set1-5,
cycle multiindex* list={
mark list*\nextlist
Set1-5\nextlist
},
}

\usepackage{shellesc}
\usetikzlibrary{external}

\usepackage{booktabs}

\usepackage{graphics}

\usepackage{rotating}
\usepackage{mathtools}

\providecommand{\bi}[1]{\hyperlink{def:bi}{b_{i}}}

\providecommand{\ThnG}[1]{\hyperref[eq:new_st_isoparam_regions]{\mathcal{T}_{h,#1}^{\Gamma}}}

\usepackage{pifont}

\ifpdf
\hypersetup{
  pdftitle={Local error estimates for (non-)linear stabilization FEM},
  pdfauthor={E. Burman, F. Heimann}
}
\fi

\newcommand{\jump}[1]{[ #1 ]}
\newcommand{\tripplenorm}[1]{||| #1 |||}

\begin{document}

\maketitle

\begin{abstract}
In this paper, we investigate the combination of a linear continuous interior penalty type and a non-linear artificial diffusion stabilisation applied to the transport problem, based on continuous Galerkin finite elements in space. This method was recently introduced and analysed for globally smooth solutions in [Burman 2023, SIAM J. Sci. Comput., 45, 1, A96-A122]. We provide a rigorous proof of a localisation principle in terms of weighted stability and a priori error bound results, which follow the widely known $\mathcal{O}(h^{k+1/2})$ scaling in the $L^2(\Omega; t=T)$ norm, where $k$ denotes the polynomial order of the finite element space and $h$ the mesh size. The analysis is semi-discrete in space and assumes sufficient local regularity of the continuous solution on the smooth part of the domain, where the continuous interior penalty stabilisation is active, whilst artificial diffusion operates on the remaining rough parts of the domain. Thereby, the analysis demonstrates that typical numerical errors in the rough part stay localised relative to the convection velocity and do not negatively affect the smooth parts of the solution, if the stabilisation combination is set up accordingly.
\end{abstract}

\begin{keywords}
  Stabilised Finite Element Method; Transport Problem; Continuous Interior Penalty Stabilisation; Weighted Error estimate
\end{keywords}

\begin{AMS}
65M60, 65M85, 65D30
\end{AMS}

\section{Introduction}
Finite element methods (FEM) are a widely-used tool for the simulation of physical phenomena of interest, including flow simulations. \cite{brennerscott, hughes2012finite, ern2021finite} A variety of specific partial differential equations is salient for different applications. In this paper, we focus on the transport or convection equation, which models the transport of a scalar species by a predefined convection velocity, with no diffusion. The transport equation is mathematically interesting both in its own right, but also as a limit case of convection-diffusion systems with dominating convection.

For such applications, it is instructive to distinguish continuous Galerkin (CG) and discontinuous Galerkin (DG) finite element methods. \cite{ern2021finite, di2011mathematical} Both share the same choice of polynomials of order $k$ locally on each element $T$ to construct a discrete space, in which the solution to the discrete problem will eventually be constructed. Whilst these elements are kept independent in the DG method, degrees of freedom on shared boundaries of elements will be joint in CG, which leads to continuous discrete functions.

When methods of both types are applied to convection-diffusion problems with vanishing diffusion (or, equivalently, dominating convection), the choices of the DG fluxes as upwind fluxes or penalties on the solution jump have been established to yield optimal stability and accuracy properties, whilst non-dissipative versions of DG show a stability constant diverging with the inverse of the diffusion constant. Similarly, CG methods--resulting in fewer degrees of freedom, and coming without choices of discrete fluxes--benefit from similar stabilisations of e.g. jumps in the gradient to preserve stability in the case of vanishing diffusion. \cite[Chapter 61]{ern2021finite3} There are several options for such additional stabilisation, which include early work on Streamline-Upwind Petrov-Galerkin (SUPG) stabilisation \cite{BROOKS1982199}, and, more recently, Continuous Interior Penalty (CIP) (alternatively called Gradient Jump Penalty, GJP) stabilisations. \cite{10.1007/BFb0120591, BURMAN20041437, M2AN_2007__41_1_55_0, burman2007continuous, doi:10.1137/120867482}

In the recent paper \cite{burman2023someobservations}, a combination of a linear CIP with a non-linear artificial diffusion type stabilisation was introduced, with the motivation of applying the CIP framework to application cases with e.g. shocks in some areas of the flow field. In such application cases, it proves beneficial to detect elements with (or close to) a shock by a discrete residual estimator, and switch the CIP stabilisation off on these elements in order to activate artificial diffusion stabilisation there instead.

Apart from a rigorous mathematical analysis focusing on globally smooth solution functions, \cite{burman2023someobservations} observed numerically that in the presence of a shock, areas of the discrete solution some physical distance away remain largely unaffected by the unavoidable challenges in resolving the shock accurately. The main contribution of the present paper is a rigorous mathematical statement and proof of this property.

For this purpose, we provide a local error estimate for the variant combining linear and non-linear stabilisation, extending the mathematical analysis in \cite{burman2023someobservations}, where global estimates for the stabilisation combination were established.
 
The technique of using weighted norms will be the main technical tool to achieve these estimates. This was developed and applied to the related case of global CIP stabilisation in \cite{burman2022weighted}. Hence, the present paper can be also regarded as a generalisation of \cite{burman2022weighted} to the case of mixed stabilisation.

To obtain these rigorous local stability and a priori error results for the stabilisation combination, the paper is structured as follows. First, in the remainder of this introduction, we specify in detail our model problem of consideration. Moreover, we show a numerical example of the mathematical structure to be established for increased readability. In the following \Cref{sect_intro_method}, we introduce the spatially discrete method of interest. Afterwards, in \Cref{sect_math_analysis} the rigorous stability and a priori error results are established.

\subsection{Continuous Problem} To be specific, we focus on the following transport problem: Let the time interval of interest be given as $[0, T]$, as well as a polygonal spatial domain $\Omega \subseteq \mathbb{R}^{d}, d=2,3$. Moreover, assume that the initial concentration of a species is known as $u(x,0) = u_0(x), x \in \Omega$, as well as a divergence-free convection field $\beta\colon \Omega \times [0,T] \to \mathbb{R}^d$, $\nabla \cdot \beta = 0$. We distinguish two cases regarding boundary conditions. First, for the case of inflow boundary conditions, we are interested in the function $u = u(x,t)$ which satisfies
\begin{align}
 \mathcal{L}(u) := \partial_t u + \beta \cdot \nabla u &= f \quad \textnormal{ in } \Omega, \\
 u &= g \quad \textnormal{ on } \partial \Omega_{\text{in}}(t), \nonumber
\end{align}
where $f\colon \Omega \times [0,T]$ describes physical sources or sinks of concentration, and $g\colon \partial \Omega_{\text{in}} \times [0,T]$ inflow Dirichlet data on the inflow part of the boundary defined as
\begin{equation}
 \partial \Omega_{\text{in}} (t) := \{ x \in \partial \Omega  \, | \, n_{\partial \Omega} \cdot \beta(x,t) < 0\}.
\end{equation}

Performing partial integration of this strong formulation yields the following continuous weak formulation: Find $u(t)$ such that for all $v(t)$
\begin{equation}
 (\mathcal{L}(u) ,v)_\Omega + (|\beta n_{\partial \Omega}|u,v )_{ \partial \Omega_{\text{in}}(t) } = (f,v)_\Omega + (|\beta n_{\partial \Omega}|g,v )_{ \partial \Omega_{\text{in}}(t) } \label{eq_cont_problem_inflow}
\end{equation}
This can be also interpreted as a weak imposition of the Dirchlet inflow boundary data. As usual, we denote the $L^2$ inner product for domains $S \subseteq \Omega$ as $(u, v)_S = \int_S \mathrm{d}x u v$.

Second, in the case of periodic boundary conditions, we assume that the boundaries of the domain $\Omega$ are connected in a periodic way, so that the strong form of the problem merely reads $\mathcal{L}(u) = f$ in $\Omega$ and the corresponding weak form becomes:  Find $u(t)$ such that for all $v(t)$
\begin{equation}
 (\mathcal{L}(u) ,v)_\Omega = (f,v)_\Omega. \label{eq_cont_problem}
\end{equation}
\subsection{Motivating example: Plain CIP and stabilisation combination applied to a shock case} Let us give a visualisation of an intended application case of the stabilisation combination, and of the practical relevance of localised error estimates for the reader's convenience. Let $\Omega = [0,1]^2$ be the unit square in 2D, $\beta = (1,0)^T$, and an initial concentration given as $u_0(x,y) = 1$ for $x<\frac{1}{3}$ and $u_0 (x,y) = 0$ for $x\geq \frac{1}{3}$. As boundary condition, let $g=1$ on the left hand side edge. Hence, we will observe a shock at the line $x=\frac{1}{3}$, which traverses to the right. We discretise this problem with both plain CIP stabilised CG elements of second order in space and the stabilisation combination and show typical results after discrete interpolation $t=0$ and some relevant simulation time, $t = 0.375$, in \Cref{figure_motivating_example}.
\begin{figure}
 \begin{center}
  \hspace{0.8cm} $t=0$ \hspace{2.3cm} $t = 0.375$ plain $s_0$ \hspace{1.7cm} $t = 0.375$ $s_0 \& s_1$ \\
  \includegraphics[width=\textwidth]{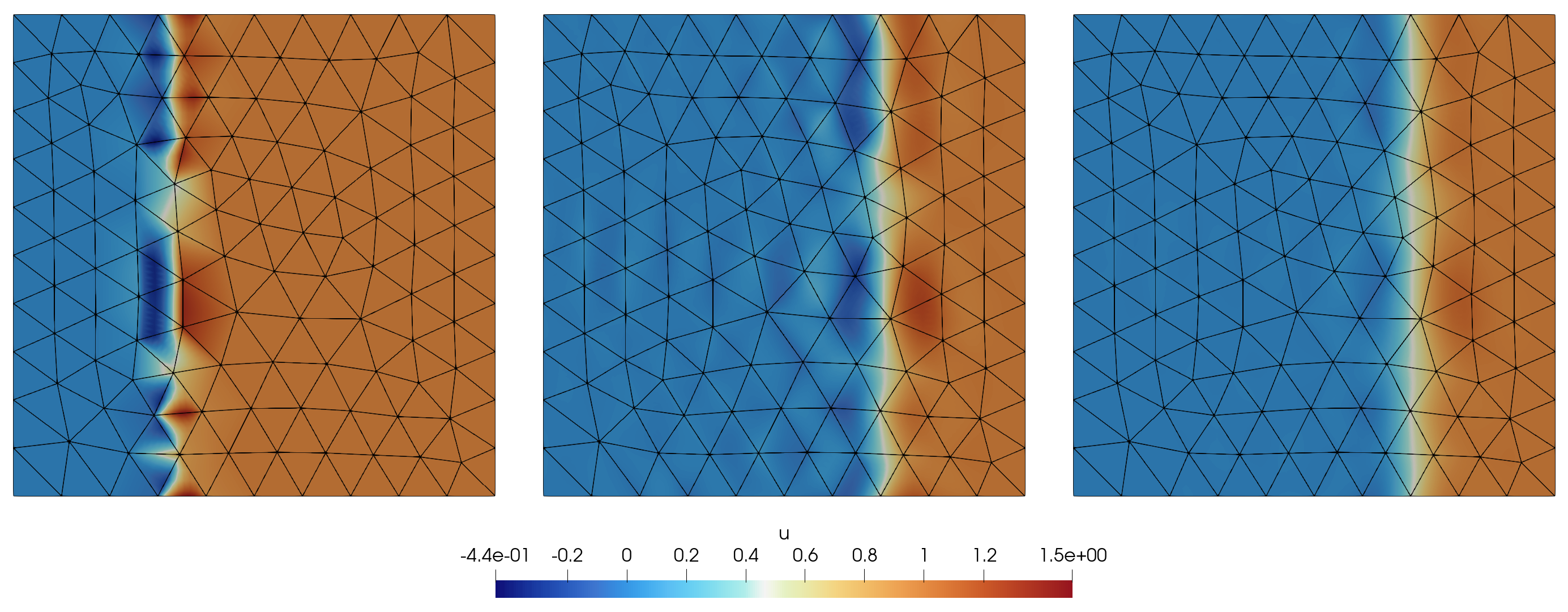}
 \end{center}
 \caption{Numerical example of discretisations of a shock propagation with plain CIP ($s_0$) stabilisation and the combination between linear and non-linear stabilisations ($s_0\&s_1$).}
 \label{figure_motivating_example}
\end{figure}
Unsurprisingly, the shock poses a challenge to the discrete interpolation ($t=0$, left hand side), which shows in the over- and undershoots in its vicinity. When progressing in time, both stabilised discrete solutions will transport this area in a stable manner. We note first that the artificial diffusion, which is enabled in the right hand side case on elements which are found to show a discrete residual, facilitates an accurate approximation of the shock with mild over- and undershoots. This illustrates the motivation for introducing the dual stabilisation discretisation in application cases with shock. Moreover, also note that in the left hand side area of the physical domain at $t = 0.375$, in both stabilised cases, the quality of the discrete solution is not significantly impacted by the inaccuracies in the shock region. The purpose of the weighted or localised stability and a priori error estimate statements is the rigorous mathematical description of this property.
\section{Spatially Discrete problem with stabilisation combination} \label{sect_intro_method}
\subsection{Defining the discrete problem}
We fix some notation in order to introduce the discrete method: Let $\mathcal{T}_h$ denote a shape-regular triangulation of the polygonal domain $\Omega$ with maximal mesh size $h$, or maximal diameter $h_T$ for each $T \in \mathcal{T}_h$. Furthermore, we denote the facets of this mesh as $\mathcal{F}_h$, which is assumed to be decomposed into interior and exterior facets, $\mathcal{F}_h = \mathcal{F}_i \cup \mathcal{F}_o$, where $\mathcal{F}_o := \{ F \in \mathcal{F}_h \, | \, F \subseteq \partial \Omega \}$ and $\mathcal{F}_i = \{ F \in \mathcal{F}_h \, | \, \exists T_1, T_2 \in \mathcal{T}_h, T_1 \neq T_2, F = T_1 \cap T_2 \}$. For expressions $A$ and $B$, we write $A \lesssim B$ if there exists a constant $C$ independent of the mesh size, polynomial order, and shape of the domain such that $A \leq C \cdot B$.

To introduce further straightforward notation, the norm corresponding to the $L^2$ inner product is written as $\| u \|_S := \left( (u,u)_S \right)^{1/2}$. The related $L^\infty$ norm is denoted by $\| \cdot \|_{\infty, S}$. For sets of codimension 1, such as $F \in \mathcal{F}_h$, we assume that the corresponding inner products and norms are defined with regard to the according integral measure.

Let $V_h$ be the usual continuous discrete space of order $k$, i.e.
\begin{equation}
 V_h := \{ v \in C^1(\Omega) \, | \, v|_T \in \mathcal{P}^k(T) \ \forall T \in \mathcal{T}_h \}.
\end{equation}

We continue with specifying the stabilisation trilinear forms, which will appear in the discrete problem. We will denote by $s_0(w_h; u_h, v_h)$ the linear CIP stabilisation and by $s_1(w_h; u_h, v_h)$ the non-linear stabilisation of artificial diffusion type.\footnote{We mention in passing that the non-linear stabilisation will be non-linear insofar as the weight in the spatially semi-discrete (but temporally continuous) formulation will be chosen as $u_h$, so that $u_h \mapsto s_1(u_h; u_h, v_h)$ would be non-linear. Arguably, the same property holds with regards to $s_0$. In that way, the names linear and non-linear stabilisation should be rather understood as colloquial names and not in a mathematically substantial manner, following the convention of \cite{burman2023someobservations}.}

At a fixed point in time and for each element $T \in \mathcal{T}_h$, we want to introduce a scalar switch parameter $\varpi(w_h)|_T \in [0,1]$, which can be calculated in dependence of an argument discrete function $w_h$. Value 1 should indicate full non-linear stabilisation, whilst value 0 represents full CIP stabilisation, depending on the amount of numerical residual is detected, which would correspond to e.g. a shock. As the function should be constant on each element, it will be discontinuous along element boundaries. Introducing options for calculating bulk or facet residuals, $\rho_i \in \{ 0,1\}, i=1,2$, we define, denoting by $\jump{\cdot}|_F$ the jump of a function over an interior facet $F \in \mathcal{F}_i$,
\begin{align}
R_T(w_h) :&= \rho_1 \| \jump{\nabla w_h n_F} \|_{\infty, \partial T \backslash \partial \Omega} + \rho_2 \| \partial_t w_h + \beta \cdot \nabla w_h - f_h \|_{\infty, T} \\
 \varpi(w_h)|_T :&= \min(1, h_T \frac{R_T(w_h)}{U})^\alpha,  \quad T \in \mathcal{T}_h,
\end{align}
where $\alpha, U > 0$ are scalar parameters with the following motivation: Absolute values in the residual (scaled by $h$) are normalised against $U > 0 $. A small enough $U$ will impose the viscous regularisation everywhere where $R_T \neq 0$. In the case of small $U$ or large $R_T$, the $\min(1,\cdot)$ ensures boundedness by 1 for $\varpi(w_h)$. So, we could regard $U$ as the target value of $R_T(w_h) h$, for which the non-linear stabilisation should be fully activated. The parameter $\alpha \geq 0$ controls the power scaling of the decay of the boundary layer from $\varpi =1$ around the shock towards $\varpi = 0$ outside; the higher $\alpha$, the more narrowly localised will the artificial diffusion be. Typical parameter choices in practice include $\alpha = 1,\dots,4$. By $f_h \in V_h$, we denote some discrete interpolation of $f$ with the interpolation property $\| f - f_h \|_\Omega \leq C h^k \| f\|_{H^k(\Omega)}$.

We illustrate the definitions of $R_T(w_h)$ and $\varpi(w_h)$, in dependence of two example values for $\alpha$, in \Cref{fig_res_and_varpi} by the physical example from \Cref{figure_motivating_example}, for the case $\rho_1 = 0, \rho_2 =1$.
\begin{figure}
 \begin{center}
 \begin{tikzpicture}
  \node at (-4,4.4) {$\mathcal{L}(w_h) -f_h$}; \node at (0,4.4) {$\varpi(w_h), \alpha = 1$}; \node at (4,4.4) {$\varpi(w_h), \alpha = 4$};
  \node at (0,0) {\includegraphics[width=0.96\textwidth]{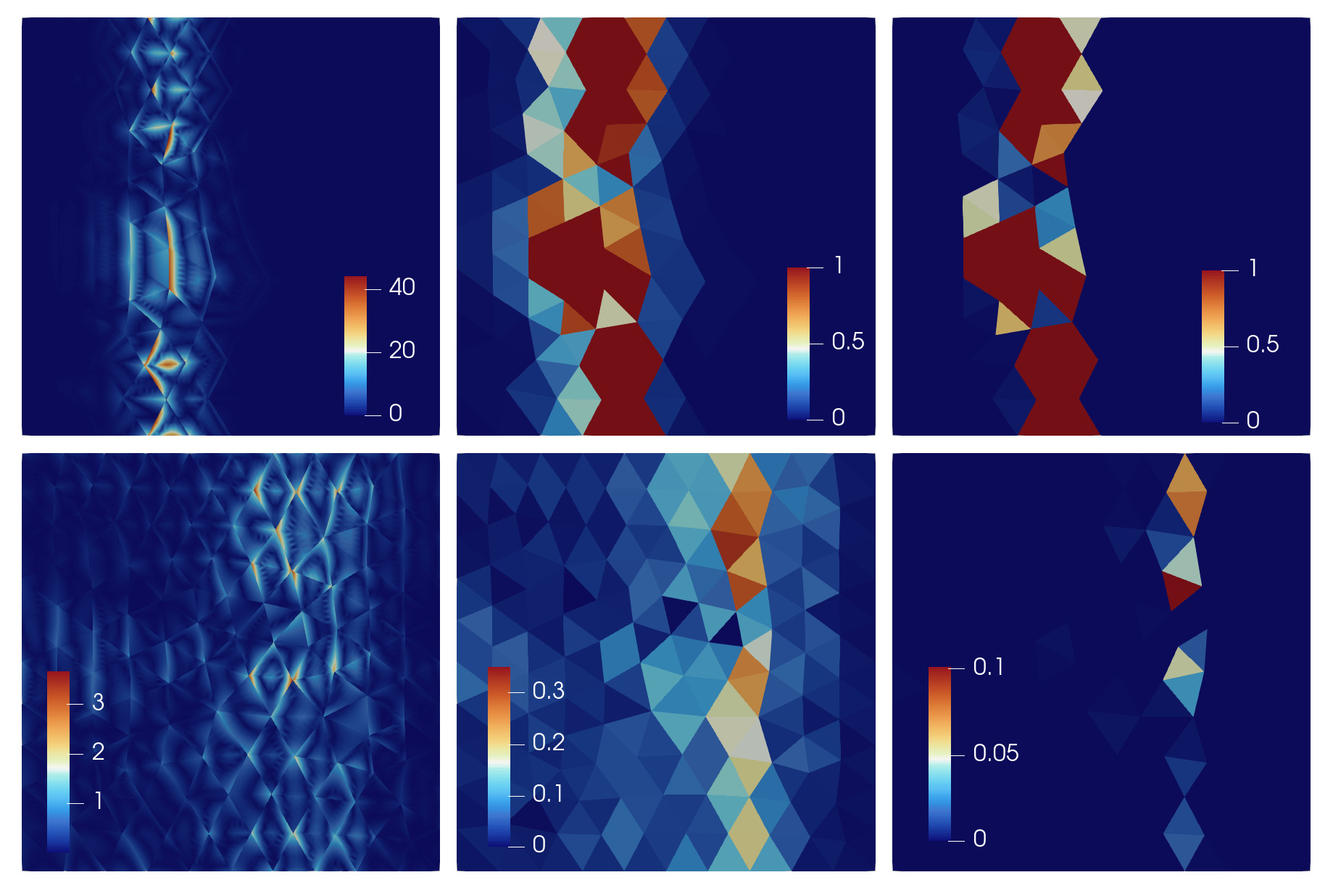}};
  \node[rotate=90] at (-6.4,2.2) {$t=0.05$}; \node[rotate=90] at (-6.4,-2.2) {$t=0.375$};
 \end{tikzpicture}
 \end{center}
 \caption{Illustration of $R_T(w_h)$ and $\varpi(w_h)$. We display two time instances of the example from \Cref{figure_motivating_example}, $t = \Delta t  = 0.05$ in the upper row, and $t=0.375$ in the lower row. In the left column, the discrete residual in the bulk used to calculate $R_T$ is shown (for $\alpha = 1$, which is however only relevant insofar as the discrete solution depends on it after time steps, a difference which is not essential to any feature we want to illustrate). In the middle column, we show the resulting elementwise constant $\varpi(w_h)$ with $\alpha =1$, which will result in a relatively wide region with artificial diffusion stabilisation activated, in particular at $t=0.375$. Note that each individual plot has a distinct color scale. On the right hand side column, the corresponding $\varpi(w_h)$ calculated with $\alpha = 4$ is shown, which is more narrowly located around the areas of high residual.}
 \label{fig_res_and_varpi}
\end{figure}

With the weighting function $\varpi$ defined, we can now proceed with specifying the CIP stabilisation term in areas of small $\varpi$ ($\varpi < 1$) as
\begin{equation}
 s_0(w_h; u_h, v_h) = \sum_{T \in \mathcal{T}_h} h_T^2 \cdot (1 - \varpi(w_h)) \cdot  \left( |\beta| \cdot \jump{\nabla u_h n_F}, \jump{\nabla v_h n_F} \right)_{\partial T \backslash \partial \Omega}.
\end{equation}
In this equation, $\jump{\cdot}$ again denotes the jump across the facet $F$ of the boundary $\partial T$, and $n_F$ the outer facet normal to $\partial T$. Note that we have defined $s_0$ in terms of element boundaries and not interior facets in line with the element-wise constant $\varpi(w_h)$. Considering $\partial T \backslash \partial \Omega$ ensures the well-posedness of the jump operator.

The artificial diffusion stabilisation $s_1$ complements $s_0$ in that it scales with $\varpi(w_h)$, in particular
\begin{equation}
 s_1 (w_h; u_h, v_h) = \sum_{T \in \mathcal{T}_h} h_T \cdot \varpi(w_h) \cdot (|\beta| \cdot \nabla u_h, \nabla v_h)_T.
\end{equation}

Finally, we define the physical bilinear form options, following \Cref{eq_cont_problem} distinguishing periodic or inflow boundary conditions:
\begin{equation}
 a(u_h,v_h) := (\mathcal{L} u_h, v_h)_\Omega, \quad a^{\text{in}}(u_h,v_h) := a(u_h, v_h) + (|\beta n_{\partial \Omega}| u_h,v_h )_{\partial \Omega_{\textnormal{in}}(t)}.
\end{equation}
Taking these definitions together, and introducing stabilisation constants $\sigma_0, \sigma_1 > 0$, we arrive at the following weak form problems, which are semi-discrete in space:
\begin{definition}[Discrete problems]
\begin{enumerate}
 \item The discrete problem with inflow boundary conditions is defined as: Find $u(t) \in V_h$ such that $\forall v(t) \in V_h$
\begin{equation}
 a^{\text{in}}(u_h, v_h) + \sigma_0 s_0(u_h; u_h, v_h) + \sigma_1 s_1(u_h; u_h, v_h) = (f,v_h)_\Omega + (g, v)_{\partial \Omega_{\textnormal{in}}(t)}. \label{eq_discrete_problem_inflow}
\end{equation}
\item The discrete problem with periodic boundary conditions is defined as: Find $u(t) \in V_h$ such that $\forall v(t) \in V_h$\footnote{We implicitly assume that in the discrete space, the outer degrees of freedom are coupled according to the periodic boundary.}
\begin{equation}
 a(u_h, v_h) + \sigma_0 s_0(u_h; u_h, v_h) + \sigma_1 s_1(u_h; u_h, v_h) = (f,v_h)_\Omega. \label{eq_discrete_problem}
\end{equation}
\end{enumerate}
\end{definition}
The upcoming rigorous numerical analysis will concern the second case with periodic boundary conditions, but we presented the handling of inflow boundary conditions as well for computational relevance.
\subsection{Discrete norms and relevant results from the literature}
\subsubsection{Global norms / results from \cite{burman2023someobservations}}
For the analysis, it is instructive to introduce some discrete norms.\footnote{The notation by and large follows \cite{burman2022weighted} and \cite{burman2023someobservations}.} First, we define norms for spatial functions $v_h \in V_h$ or $v_h = v_h (t)$, for time-dependent functions. In particular, we start with the following semi-norm relating to the CIP jumps:
\begin{equation}
 | v_h |_s^2 := s_0(0; v_h, v_h) = 2 \sum_{F \in \mathcal{F}^i} h^2 \cdot \|  | \beta | \cdot \jump{\nabla v_h \cdot n_F} \|_F^2
\end{equation}
For obtaining an overall residual norm, we also add volumetric bulk material derivate contributions:
\begin{equation}
 \| v_h \|_R^2 := | v_h |_s^2 + \| h^{1/2}(\partial_t v_h + \beta \cdot \nabla v_h) \|^2_\Omega.
\end{equation}
As an illustration, this norm will contain a numerical indication as to whether a physical shocks exists in the discrete solution.

Next, a variant is defined, which contains furthermore the stabilisation $s_1$ in relation to some weight function $w_h$, or equivalently, localised diffusion contributions as implied by the stabilisation weight:
\begin{equation}
\| v_h \|_{w_h, S}^2 := \| v_h \|_R^2 + s_1(w_h; v_h, v_h) = \| v_h \|_R^2 + \| h^{1/2} \cdot | \beta|^{1/2} \cdot \varpi(w_h)^{1/2} \nabla v_h \|_\Omega^2
\end{equation}
We note that these norms were introduced in \cite{burman2023someobservations} and used to derive global stability and a priori error estimates for the stabilisation combination of consideration of this paper, for the problem with inflow boundary conditions.\footnote{We mention in passing that to handle specifically the terms stemming from the inflow boundary, the following further norm is considered in \cite{burman2023someobservations} as well $\tripplenorm{v_h}^2_{w_h} := \|v(\cdot, T) \|_\Omega^2 + \int_0^T ( \| | \beta n|^{1/2} v \|^2_{\partial \Omega} + \| v_h \|_{w_h,S}^2) \mathrm{d}t$. We conjecture a similar weighted norm variant (see below) being relevant if a weighted norm analysis of \cref{eq_discrete_problem_inflow} as opposed to \cref{eq_discrete_problem} was put forward.}

Some of the techniques in the proof of the inf-sup stability in \cite{burman2023someobservations} will provide a blueprint for the arguments employed in this paper. For the reader's convenience, we want to briefly reference these conerstone ideas here, as we believe their structure is more accessible in the less technical global norm case, before we lift them to the weighted norms later:
\begin{enumerate}
 \item Fundamentally, stability is derived in the inf-sup framework, that means for each $v_h \in V_h$, we present a candidate function $w_h$ so that
 \begin{equation}
  (a + \sigma_0 s_0 + \sigma_1 s_1)(v_h, w_h) \gtrsim \tripplenorm{v_h}^2, \textnormal{ for some norm } \tripplenorm{\cdot}.
 \end{equation}
 Insight: It proves instructive to consider a linear combination of the kind $w_h = \alpha v_h + \beta \mathcal{L}(v_h)$ for this purpose.
 \item As a consequence of that, inner product between $v_h$ and $\mathcal{L}(v_h)$ need to be controlled in the stabilisation terms $s_0$, $s_1$. To this purpose an application of Cauchy-Schwarz yields the following helpful upper bound:\cite[Lemma 6]{burman2023someobservations}
 \begin{lemma} \label{lemma6_burman23}
 It holds for all $w_h,v_h \in V_h$, $z \in V_h + H^{3/2 + \epsilon}(\Omega)$
 \begin{equation}
  s_n (w_h; z, h i_{av} \mathcal{L} v_h) \leq C s_n(w_h; z,z)^{1/2} \| h^{1/2} \mathcal{L}(v_h) \|_\Omega \label{eq_s_n_of_vh_mat_deriv}
 \end{equation}
\end{lemma}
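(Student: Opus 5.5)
The plan is to apply the Cauchy--Schwarz inequality to the symmetric positive semidefinite form $s_n(w_h;\cdot,\cdot)$ and then bound the stabilisation seminorm of the test function $h\, i_{av}\mathcal{L}v_h$ by $\|h^{1/2}\mathcal{L}(v_h)\|_\Omega$. Here $i_{av}$ is the usual averaging (Oswald) quasi-interpolant onto $V_h$, and $n\in\{0,1\}$. For fixed $w_h$, both $s_0(w_h;\cdot,\cdot)$ and $s_1(w_h;\cdot,\cdot)$ are sums of weighted $L^2$ inner products with nonnegative weights $h_T^2(1-\varpi)|\beta|$ and $h_T\varpi|\beta|$. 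Hence
\begin{equation*}
s_n(w_h; z, h\, i_{av}\mathcal{L}v_h) \le s_n(w_h;z,z)^{1/2}\, s_n(w_h; h\, i_{av}\mathcal{L}v_h, h\, i_{av}\mathcal{L}v_h)^{1/2}.
\end{equation*}
The regularity $z\in V_h + H^{3/2+\epsilon}(\Omega)$ is used only so that the traces of $\nabla z$ on facets are well defined, which makes $s_0(w_h;z,z)$ finite. It therefore remains to show
\begin{equation*}
s_n(w_h; h\, i_{av}\mathcal{L}v_h, h\, i_{av}\mathcal{L}v_h) \lesssim \|h^{1/2}\mathcal{L}(v_h)\|_\Omega^2 .
\end{equation*}

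For $n=1$, set $y_h := h\, i_{av}\mathcal{L}v_h$. Since $\varpi\le 1$ and $|\beta|$ is bounded, we have $s_1(w_h;y_h,y_h)\le \|\beta\|_\infty \sum_T h_T \|\nabla y_h\|_T^2$. On each element we apply the inverse inequality $\|\nabla y_h\|_T\lesssim h_T^{-1}\|y_h\|_T$. We then use the local $L^2$-stability of $i_{av}$ on the patch $\omega_T$, namely $\|i_{av} q\|_T\lesssim \|q\|_{\omega_T}$, valid for piecewise polynomials $q$. Together these give
\begin{equation*}
\sum_T h_T\|\nabla y_h\|_T^2 \lesssim \sum_T h_T^{-1} h_T^2 \|\mathcal{L}v_h\|_{\omega_T}^2 \lesssim \|h^{1/2}\mathcal{L}v_h\|_\Omega^2 .
\end{equation*}
The last step uses local quasi-uniformity ($h_T\sim h_{T'}$ on patches) and the finite overlap of the patches. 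Note that $\mathcal{L}v_h = \partial_t v_h + \beta\cdot\nabla v_h$ is only piecewise polynomial when $\beta$ is piecewise polynomial. For general smooth $\beta$, $i_{av}$ should be read as acting on a local polynomial projection of $\beta$, or $L^2$-stability of $i_{av}$ should be assumed. In either case the stability bound above is what is needed.

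For $n=0$, we use $1-\varpi\le1$ and bound $s_0$ by $\|\beta\|_\infty\sum_T h_T^2\|\jump{\nabla y_h n_F}\|^2_{\partial T\setminus\partial\Omega}$. The facet jump is controlled by the traces from the two neighbouring elements. By a discrete trace inequality followed by an inverse inequality,
\begin{equation*}
\|\nabla y_h\|_{\partial T}\lesssim h_T^{-1/2}\|\nabla y_h\|_{T}\lesssim h_T^{-3/2}\|y_h\|_T ,
\end{equation*}
so each term is bounded by $h_T^2 h_T^{-3}\|y_h\|^2_{T\cup T'}$. Inserting $y_h = h\, i_{av}\mathcal{L}v_h$ and using the $L^2$-stability of $i_{av}$ again gives the bound $h_T^{-1}h_T^2\|\mathcal{L}v_h\|^2_{\omega}$, and summation yields $\|h^{1/2}\mathcal{L}v_h\|^2_\Omega$ up to a constant.

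The main obstacle is purely technical: the discrete arguments (inverse, trace and averaging-stability estimates) have to be applied consistently with the element-wise mesh function $h$ and the varying coefficient $\beta$. The role of the averaging $i_{av}$ is precisely to make $h\,\mathcal{L}v_h$ an admissible continuous test function in $V_h$, so that the inverse estimates apply. The constant $C$ then depends only on shape regularity, $k$ and $\|\beta\|_{\infty}$.
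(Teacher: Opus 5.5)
Your proposal is correct and follows essentially the paper's route. The paper imports this lemma as Lemma 6 of Burman (2023) and describes it as an application of the Cauchy--Schwarz inequality for the positive semidefinite form $s_n(w_h;\cdot,\cdot)$; the factor involving $h\, i_{av}\mathcal{L}v_h$ is then bounded by trace and inverse inequalities and the $L^2$-stability of $i_{av}$, which is exactly the structure of the weighted analogues proved later in the paper. As minor remarks: the elementwise inverse and trace estimates do not need the averaging, which only makes the test function admissible in $V_h$; and since $\beta$ is assumed constant, $\mathcal{L}v_h \in V_{DG}$, so your caveat about non-polynomial $\beta$ is unnecessary.
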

\item Another relevant insight relates to the stabilisation switch. It roughly says that there is never ``too less'' stabilisation, or more specifically: The stabilisation energy which would be employed in the plain CIP case, $s_0(0; v_h, v_h)$, can be bounded from above by the sum of the switched stabilisations, for any weight or switch function: \cite[Lemma 3]{burman2023someobservations}
\begin{lemma} \label{lemma3_burman23}
For all $w_h,v_h \in V_h$ there holds
\begin{equation}
 | v_h |_s^2 \leq C( s_0(w_h; v_h, v_h) + s_1(w_h; v_h, v_h) ). \label{s_0_est_both_s}
\end{equation}
\end{lemma}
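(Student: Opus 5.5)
The plan is to prove \eqref{s_0_est_both_s} facet by facet. We split $|v_h|_s^2$ into contributions of individual interior facets and bound each one by the element-local pieces of $s_0$ and $s_1$ from the two elements sharing that facet. Fix an interior facet $F = T_1 \cap T_2$. By definition, $s_0(w_h; v_h, v_h)$ contains the term
\[
 h_{T_j}^2 (1-\varpi_j)\, \| |\beta|^{1/2} \jump{\nabla v_h n_F} \|_F^2, \qquad j=1,2,
\]
with $\varpi_j := \varpi(w_h)|_{T_j}$, because $F \subseteq \partial T_j \setminus \partial\Omega$ for both elements. By shape regularity $h_{T_j}\sim h_F$, and for quasi-uniform meshes also $h_{T_j}\sim h$. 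So the facet term $h^2 \| |\beta|^{1/2}\jump{\nabla v_h n_F}\|_F^2$ in $|v_h|_s^2$ splits as $(1-\varpi_j)$ times the facet term, which is bounded by the $s_0$ contribution, plus $\varpi_j$ times the facet term. This works with either $j$; we take $j=1$. Only the $\varpi_1$-weighted part remains to be controlled by $s_1$.

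For the remaining part we use the triangle inequality on the jump,
\[
 \|\jump{\nabla v_h n_F}\|_F \le \|\nabla v_h|_{T_1}\cdot n_F\|_F + \|\nabla v_h|_{T_2}\cdot n_F\|_F,
\]
and then the discrete trace inequality $\|\nabla v_h|_{T}\|_F \lesssim h_T^{-1/2} \|\nabla v_h\|_T$ for polynomials. This gives
\[
 \varpi_1 h^2 \| |\beta|^{1/2}\jump{\nabla v_h n_F}\|_F^2 \lesssim \varpi_1 h \big( \| |\beta|^{1/2}\nabla v_h\|_{T_1}^2 + \| |\beta|^{1/2}\nabla v_h\|_{T_2}^2 \big).
\]
Here $|\beta|$ is treated as locally essentially constant: we bound $\sup_F|\beta| \lesssim$ its value on the elements, assuming $\beta$ is Lipschitz or piecewise constant with $h$ small. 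The term on $T_1$ is directly part of $s_1$. The term on $T_2$ carries the weight $\varpi_1$ rather than $\varpi_2$.

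\textbf{Main obstacle.} We must avoid needing $\varpi_1 \lesssim \varpi_2$, since $\varpi(w_h)$ is discontinuous. To do so, we choose for each facet the element with the \emph{smaller} weight. Set $\varpi_{\min} = \min(\varpi_1,\varpi_2)$ and let $j$ be the index attaining it. Write the facet term as $(1-\varpi_{\min})(\cdots) + \varpi_{\min}(\cdots)$. The first part is bounded by the $s_0$ term of $T_j$. The second part, after the triangle and trace inequalities, is bounded by
\[
 \varpi_{\min} h\big(\| |\beta|^{1/2}\nabla v_h\|_{T_1}^2 + \| |\beta|^{1/2}\nabla v_h\|_{T_2}^2\big) \le h\big(\varpi_1 \| |\beta|^{1/2}\nabla v_h\|_{T_1}^2 + \varpi_2 \| |\beta|^{1/2}\nabla v_h\|_{T_2}^2\big),
\]
which is controlled by $s_1$ restricted to $T_1\cup T_2$.

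Summing over all interior facets, each element is counted at most $d+1$ times, a number bounded by a constant. This yields $|v_h|_s^2 \le C(s_0(w_h;v_h,v_h)+s_1(w_h;v_h,v_h))$, with $C$ depending only on shape regularity, the trace constant (which depends on $k$), and the local variation of $|\beta|$. The argument holds for arbitrary $w_h$, since we used only $\varpi\in[0,1]$.
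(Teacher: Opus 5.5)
Your proof is correct, but it uses a different decomposition from the paper's route. The paper takes this lemma from \cite{burman2023someobservations}. Its weighted analogue \eqref{eq_s_0_zero_weight_vs_s_sum_y_h} is proved ``following the structure'' of that original argument:
\begin{itemize}
\item The elements are classified by a threshold into $\mathcal{T}_-=\{\varpi\le 1/2\}$ and $\mathcal{T}_+=\{\varpi>1/2\}$.
\item Facet terms seen from $\mathcal{T}_-$ are absorbed into $s_0$ via $1\le 2(1-\varpi)$ on the $\mathcal{T}_-$ side. This includes facets of $\mathcal{T}_+$ elements that border $\mathcal{T}_-$.
\item Facets interior to $\mathcal{T}_+$ are treated with the trace inequality. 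Since $\varpi\in[1/2,1]$ on both neighbours, each neighbour's trace can be charged to its own $s_1$ contribution.
\end{itemize}

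You instead split every facet term convexly as $(1-\varpi_{\min})+\varpi_{\min}$, with $\varpi_{\min}=\min(\varpi_1,\varpi_2)$. The first part goes to the $s_0$ contribution of the minimising neighbour. The second goes to both neighbours' $s_1$ terms via $\varpi_{\min}\le\varpi_i$.

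The ingredients are the same in both arguments: nonnegativity of the element--facet pieces of $s_0$, the triangle inequality on the jump, the discrete trace inequality, and finite overlap. What your per-facet splitting buys is economy. It needs no threshold and no case distinction between interface facets and facets interior to $\mathcal{T}_+$. It also needs no argument that $\varpi$ varies by at most a factor of two across a facet, and it avoids the factor-two losses of the threshold.

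The threshold argument's advantage is mainly interpretive: it mirrors the splitting of the mesh into CIP-dominated and diffusion-dominated regions. Your splitting would transfer just as easily to the $\phi$-weighted setting, because $\phi$ is continuous across facets and a weighted trace inequality is available.

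Your initial choice of a fixed $j=1$ would indeed fail for the reason you give. It is the choice of the minimising neighbour in your final argument that makes the proof work.
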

\end{enumerate}
\subsubsection{Local norms / results from \cite{burman2022weighted}}
In this paper, we aim at developing a localised version of the results of \cite{burman2023someobservations}. To this end, we introduce now weighted counterparts of the norms defined above. In general, let $\phi$ be a given weighting function, then the weighted norms with an additional $\phi$ lower index are defined as
\begin{align}
 \| v\|_\phi &:= \| \phi v \|_\Omega, \quad \| v_h \|_{R,\phi}^2 := | \phi v_h|_s^2 + \| h^{1/2}(\partial_t v_h + \beta \cdot \nabla v_h) \|^2_\phi \\
 \| v_h \|_{w_h, S, \phi} &:= \| v_h \|_{R,\phi}^2 + \| h^{1/2} \cdot | \beta|^{1/2} \cdot \varpi(w_h)^{1/2} \nabla v_h \|_\phi^2
\end{align}
The construction of the weighting function $\phi$ follows \cite{burman2022weighted}: Let $\varphi \in C^{k+1}(\Omega)$ be a smooth positive function defined using polar/ spherical coordinates, depending only on $r(x) = | \mathbf{x}_0 - \mathbf{x}|$ with $\varphi'(r) \leq 0$, $\varphi(r) = 1$ for all $r \leq r_0$ and $\varphi(r) \sim \exp(-(r-r_0)/\sigma)$ for all $r> r_0$ with $\sigma = K \sqrt{h}, K > 1$, and for some $C > 0$,
\begin{equation}
 | \partial^l_r \varphi(r)| \leq C \sigma^{-l} \varphi(r), \quad l \geq 1.  \label{eq:about_varphi}
\end{equation}
Define $\phi(\mathbf{x},t) := \varphi(r( \mathbf{x} - \beta t))$. Then, it follows
\begin{corollary}
 $\phi$ satisfies
 \begin{align}
  \mathcal{L} \phi &= (\partial_t + \beta \cdot \nabla) \phi = 0 \quad \textnormal{ and} \label{eq_phi_transport_L} \\
  | D^l \phi | &\leq C \sigma^{-l} \phi \quad l \geq 1, \quad \textnormal{ in particular }  |\nabla \phi | \leq C K^{-1} h^{1/2} | \phi | \label{eq_bnd_nabla_phi}.
 \end{align}
\end{corollary}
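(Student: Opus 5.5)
The plan is to verify both properties directly from the definition $\phi(\mathbf{x},t) = \varphi(r(\mathbf{x}-\beta t))$ by the chain rule, treating $\beta$ as a constant vector field; this is the setting in which the shifted argument $\mathbf{x}-\beta t$ describes exactly the characteristics. For a general divergence-free $\beta$ one would instead define $\phi$ by transporting $\varphi(r(\cdot))$ along the flow map $X(t;\mathbf{x})$. The transport identity then holds by construction, and the derivative bounds require in addition bounds on $D^l X$, which are uniform on $[0,T]$ for smooth $\beta$. I will write the argument for constant $\beta$ and remark on this extension at the end.

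\textbf{Transport identity.} Set $\mathbf{y} = \mathbf{x}-\beta t$ and $\psi(\mathbf{y}) := \varphi(|\mathbf{x}_0-\mathbf{y}|)$, so that $\phi(\mathbf{x},t)=\psi(\mathbf{x}-\beta t)$. The chain rule gives
\begin{equation*}
\partial_t \phi = -\beta\cdot\nabla\psi(\mathbf{x}-\beta t), \qquad \nabla\phi = \nabla\psi(\mathbf{x}-\beta t).
\end{equation*}
Hence $\partial_t\phi + \beta\cdot\nabla\phi = 0$, which is \eqref{eq_phi_transport_L}.

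\textbf{Derivative bounds.} Spatial derivatives of $\phi$ are spatial derivatives of $\psi$ evaluated at a shifted point, so it suffices to bound $D^l\psi$. Away from $\mathbf{y}=\mathbf{x}_0$, Faà di Bruno gives $D^l\psi$ as a finite sum of terms $\partial_r^j\varphi(r)\prod_i D^{m_i} r$ with $1\le j\le l$ and $\sum_i m_i = l$. Each factor $D^{m}r$ is bounded by $C r^{1-m}$.
\begin{itemize}
\item For $r \le r_0$ we have $\varphi \equiv 1$, so every $\partial_r^j\varphi$ with $j\ge1$ vanishes and $D^l\psi=0$ there. In particular, $\psi$ is smooth at the origin of the polar coordinates and the singularity of $r$ causes no trouble.
\item For $r>r_0$ the factors $D^m r$ are bounded by $C r_0^{1-m}$. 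Inserting \eqref{eq:about_varphi} gives terms of size $C\sigma^{-j} r_0^{\,j-l}\varphi$.
\item Since $\sigma = K\sqrt{h}\le r_0$ for $h$ small (with $r_0$ fixed), we have $\sigma^{-j}r_0^{\,j-l}\le \sigma^{-l}$. The worst term is $j=l$, giving $|D^l\phi|\le C\sigma^{-l}\phi$.
\end{itemize}
For $l=1$ this reads $|\nabla\phi| \le C\sigma^{-1}\phi = C K^{-1}h^{-1/2}\phi$, using $\phi>0$. Note that this yields the power $h^{-1/2}$; the displayed $h^{1/2}$ in \eqref{eq_bnd_nabla_phi} appears to be a typo for $h^{-1/2}$, and this is the scaling I will prove.

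\textbf{Main obstacle.} There is no real difficulty; the only delicate point is the combinatorics of the lower-order factors $D^m r$ in the chain rule. These are handled by the region split above: the derivatives vanish identically for $r\le r_0$, and the factors are bounded for $r>r_0$. Together with $\sigma\lesssim r_0$, the highest power $\sigma^{-l}$ dominates all other terms. If $\beta$ is not constant, the extra step is to bound the derivatives of the flow map uniformly on $[0,T]$. The resulting constants then enter $C$ but do not change the power of $\sigma$.
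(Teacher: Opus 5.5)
Your proof is correct and is the direct verification the paper intends: the paper gives no proof, presenting the corollary as an immediate consequence of $\phi(\mathbf{x},t)=\varphi(r(\mathbf{x}-\beta t))$ with constant $\beta$ and of \eqref{eq:about_varphi}, which is exactly your chain-rule argument. You are right that the displayed $h^{1/2}$ is a typo for $h^{-1/2}$, since $\sigma^{-1}=K^{-1}h^{-1/2}$ (matching the scaling $h\sigma^{-1}=h^{1/2}K^{-1}$ in \eqref{eq_bnd_phih_res_volume}), and the condition $\sigma\lesssim r_0$ that you make explicit for $l\ge 2$ is genuinely needed but left tacit in the paper.
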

For an example function $\varphi$, we refer the reader to \cite[Fig. 1]{burman2022weighted}.

An implication of the smoothness of $\phi$ is the following observation.
\begin{corollary}
 For $v_h \in V_h$, it holds
 \begin{equation}
  |\phi v_h|_s^2 = s_0 (0; \phi v_h, \phi v_h) = s_0(0; v_h, \phi^2 v_h). \label{eq_switch_arguments_within_s0}
 \end{equation}
\end{corollary}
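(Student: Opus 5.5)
The identity rests on one fact: $\phi$ is smooth across interior facets, so it commutes with the normal-gradient jump. The key observation is the product rule on each element side of an interior facet $F$: $\nabla(\phi v_h) = v_h\nabla\phi + \phi\nabla v_h$.

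Since $\phi \in C^{k+1}(\Omega)$, both $\phi$ and $\nabla\phi$ are single-valued on $F$. The discrete function $v_h$ is continuous across $F$, so $v_h$ itself does not jump either. Hence
\begin{equation*}
 \jump{\nabla(\phi v_h)\cdot n_F} = \jump{v_h}\,\nabla\phi\cdot n_F + \phi\,\jump{\nabla v_h\cdot n_F} = \phi\,\jump{\nabla v_h\cdot n_F}.
\end{equation*}
The same computation with $\phi^2$ in place of $\phi$ gives $\jump{\nabla(\phi^2 v_h)\cdot n_F} = \phi^2\jump{\nabla v_h\cdot n_F}$. Here we use that $\phi^2$ is also smooth, so $\nabla(\phi^2) = 2\phi\nabla\phi$ is continuous.

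Next I insert these jump identities into the definition of $s_0(0;\cdot,\cdot)$. With first argument $0$ we have $R_T(0)=0$ (taking $f_h$ as part of the residual only through its definition; with $w_h=0$ the bulk residual term is $\|f_h\|_{\infty,T}$, but the notation $|\cdot|_s^2 := s_0(0;\cdot,\cdot)$ is used in the paper as the plain CIP form with $\varpi\equiv 0$, and I adopt that convention). So $s_0(0;u,v) = \sum_T h_T^2(|\beta|\jump{\nabla u\, n_F},\jump{\nabla v\, n_F})_{\partial T\setminus\partial\Omega}$. Then:
\begin{itemize}
\item $s_0(0;\phi v_h,\phi v_h)$ becomes $\sum_T h_T^2 (|\beta|\phi^2\jump{\nabla v_h n_F},\jump{\nabla v_h n_F})_{\partial T\setminus\partial\Omega}$, because the two factors of $\phi$ combine pointwise on the facet.
\item $s_0(0;v_h,\phi^2 v_h)$ becomes $\sum_T h_T^2(|\beta|\jump{\nabla v_h n_F},\phi^2\jump{\nabla v_h n_F})_{\partial T\setminus\partial\Omega}$.
\end{itemize}
These two expressions are the same integral. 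The first equality, $|\phi v_h|_s^2 = s_0(0;\phi v_h,\phi v_h)$, is simply the definition of $|\cdot|_s$.

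The only subtlety is that $\phi v_h$ and $\phi^2 v_h$ are not in $V_h$, so $s_0$ must be understood as extended to piecewise smooth continuous functions. The jump is taken elementwise from the two traces, and that is well defined for such functions. The argument is then purely pointwise on facets.
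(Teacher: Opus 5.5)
Your argument is correct and matches the paper's reasoning: the paper justifies the identity only as ``an implication of the smoothness of $\phi$''. In other words, $\phi$ and $\nabla\phi$ are single-valued on interior facets and $v_h$ is continuous, so $\jump{\nabla(\phi v_h)\cdot n_F} = \phi\,\jump{\nabla v_h\cdot n_F}$, and likewise with $\phi^2$. Your caveat about $\varpi(0)$ is not needed for the second equality, because the elementwise constant factor $h_T^2(1-\varpi(w_h)|_T)$ appears identically on both sides; hence $s_0(w_h;\phi v_h,\phi v_h) = s_0(w_h;v_h,\phi^2 v_h)$ holds for any first argument, which is the form in which the paper later uses it with $y_h$.
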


In relation to the convection velocity, we follow \cite{burman2022weighted} and \cite{burman2023someobservations} in assuming that $\beta$ is constant. This is not intended as a sharp assumption on the results presented, but helps to simplify the presentation.
\begin{assumption}
 We assume that $\beta \in \mathbb{R}^d$ is temporally and spatially constant.
\end{assumption}
We note that a weighted norm analysis of the plain CIP stabilisation, which corresponds to the case of $\rho_1 = \rho_2 = 0$ (and potentially $\sigma_1=0$, although the term is never activated anyhow) in our setting, has been put forward in \cite{burman2023someobservations}. The following result of inf-sup stability from this paper is highly important as a building block for our localised stabilisation combination analysis:\footnote{We mention in passing that it refers to the case of periodic boundary conditions. The interesting question of a modification of the result towards inflow boundary conditions is left for future research.}
\begin{lemma}[Weighted stability for plain CIP stabilisation]
Let $\sigma_0 > 0$, $K>1$. Assume that $h^{1/2}/ K$ is sufficiently small. For all $v_h \in C^1(0,T; V_h)$ there holds
\begin{align}
 \| v_h (\cdot, T)\|^2_\phi + \sigma_0 \int_0^T | \phi v_h |^2_s \mathrm{d}t \leq &~ \frac{C}{K^2} \int_0^T \| v_h\|_\phi^2 \mathrm{d}t +  \| v_h( \cdot, 0)\|^2_\phi \nonumber \\ & + 2 \int_0^T \left( (\mathcal{L} v_h, w_h)_\Omega + \sigma_0 s_0(0;v_h, w_h)\right) \mathrm{d}t \label{eq_local_stab_CIP_only}
\end{align}
where $w_h = \pi_h \phi^2 v_h$ and the constant $C \sim \sigma_0 + \sigma_0^{-1}$.
\end{lemma}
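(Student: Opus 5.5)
We test the discrete operator against $w_h=\pi_h\phi^2 v_h$ and write $w_h=\phi^2 v_h+(\pi_h\phi^2 v_h-\phi^2 v_h)$. The main term produces the weighted energy and stabilisation, and the interpolation defect is absorbed using the smallness of $\nabla\phi$ relative to $h^{-1/2}$.

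\emph{Step 1: time derivative and convection.} For the $\phi^2 v_h$ part we use $\beta$ constant and divergence free together with \cref{eq_phi_transport_L}. Pointwise this gives $\mathcal{L}(v_h)\phi^2 v_h=\tfrac12(\partial_t+\beta\cdot\nabla)(\phi v_h)^2$. Integrating over the periodic domain removes the convective part, so
\[
(\mathcal{L}v_h,\phi^2 v_h)_\Omega=\tfrac12\tfrac{d}{dt}\|v_h\|_\phi^2 .
\]
Integrating in time yields $\tfrac12\|v_h(T)\|_\phi^2-\tfrac12\|v_h(0)\|_\phi^2$; the factor $2$ in \cref{eq_local_stab_CIP_only} comes from multiplying through by $2$.

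\emph{Step 2: stabilisation with the exact weight.} We write $\phi^2\nabla v_h=\phi\nabla(\phi v_h)-\phi v_h\nabla\phi$. Since $\phi$ is smooth, its normal derivative does not jump, so on each facet
\[
\jump{\nabla(\phi^2 v_h)\cdot n_F}=\phi\jump{\nabla(\phi v_h)\cdot n_F}
\quad\text{and}\quad
\jump{\nabla(\phi v_h)\cdot n_F}=\phi\jump{\nabla v_h\cdot n_F}.
\]
This is \cref{eq_switch_arguments_within_s0}, which gives $s_0(0;v_h,\phi^2 v_h)=|\phi v_h|_s^2$ with no remainder. If some lower-order remainder does appear, we bound it by $h^2\|\nabla\phi\|_\infty$ times trace and inverse inequalities, which gives at most $\epsilon|\phi v_h|_s^2+C\epsilon^{-1}K^{-2}\|v_h\|_\phi^2$.

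\emph{Step 3: the interpolation defect, which is the main obstacle.} We need superapproximation: for $\pi_h$ the Lagrange or Scott--Zhang interpolant,
\[
\|h^{-1/2}(\pi_h\phi^2v_h-\phi^2v_h)\|_T+h^{1/2}\|\nabla(\pi_h\phi^2v_h-\phi^2v_h)\|_T\lesssim h^{1/2}\sum_{l\ge1}\|D^l\phi^2\|_\infty h^{l-1}\|v_h\|_{H^{l-1}(T)} .
\]
Using \cref{eq_bnd_nabla_phi}, inverse estimates, and the local near-constancy of $\phi$ on an element (because $h\ll\sigma$), the right-hand side becomes $\lesssim K^{-1}\|v_h\|_{\phi,T}$ elementwise. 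The convective term is handled as follows.
\begin{itemize}
\item We do \emph{not} bound $\mathcal{L}v_h$ directly by $\|h^{1/2}\mathcal{L}v_h\|_\phi$, since that quantity is not on the left-hand side.
\item Instead we use the standard orthogonality argument: subtract the elementwise projection of $\beta\cdot\nabla v_h$ (or of $\mathcal{L}v_h$ evaluated at a constant-$\phi$ value) from the test side.
\item The discrete-space error of $\mathcal{L}v_h$ against its best continuous approximation is controlled by the gradient jumps, in the manner of the oscillation estimate behind \cref{lemma6_burman23}: $\inf_{y_h}\|h^{1/2}(\beta\cdot\nabla v_h-y_h)\|_\phi\lesssim|\phi v_h|_s$ plus weight-derivative terms.
\end{itemize}
This produces $|(\mathcal{L}v_h,\pi_h\phi^2v_h-\phi^2v_h)|\lesssim K^{-1}|\phi v_h|_s\|v_h\|_\phi+K^{-2}\|v_h\|_\phi^2$. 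The time-derivative part uses that $\pi_h$ commutes with $\partial_t$ once the identity is written as a total derivative. The stabilisation defect is bounded via the Cauchy--Schwarz inequality in the $s_0$ inner product, giving $|\phi v_h|_s\cdot K^{-1}\|v_h\|_\phi$.

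\emph{Step 4: absorption.} Young's inequality with weight $\sigma_0$ turns each cross term into $\tfrac{\sigma_0}{2}|\phi v_h|_s^2+C\sigma_0^{-1}K^{-2}\|v_h\|_\phi^2$, and the remaining terms scale like $\sigma_0K^{-2}$. Absorbing into the left-hand side gives \cref{eq_local_stab_CIP_only} with $C\sim\sigma_0+\sigma_0^{-1}$. The smallness of $h^{1/2}/K$ is needed for the near-constancy of $\phi$ on each element in Step 3.
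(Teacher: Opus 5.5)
You have a genuine gap, in Step~3. The paper does not reprove this lemma. It imports it from the earlier plain-CIP weighted analysis, where, as the paper notes, $\pi_h$ is the $L^2$-orthogonal projection onto $V_h$. Your Steps~1, 2 and~4 match that argument.

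The problem is that Step~3 uses two properties that no single choice of $\pi_h$ in your proposal has. You prove superapproximation for the Lagrange or Scott--Zhang interpolant. You then invoke orthogonality, $(y_h,\pi_h z-z)_\Omega=0$ for all $y_h\in V_h$, which holds only for the $L^2$ projection. It also only helps if the subtracted function lies in the continuous space $V_h$. An elementwise projection of $\beta\cdot\nabla v_h$ is $\beta\cdot\nabla v_h$ itself, so subtracting it proves nothing.

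Without orthogonality, the convective defect can only be bounded by $\|\beta\cdot\nabla v_h\|_\phi\|\pi_h\phi^2v_h-\phi^2v_h\|_{\phi^{-1}}\lesssim|\beta|_\infty h^{-1/2}K^{-1}\|v_h\|_\phi^2$. This exceeds the admissible $K^{-2}\|v_h\|_\phi^2$ by the factor $K/h^{1/2}\gg1$.

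The time-derivative defect $(\partial_tv_h,\pi_h\phi^2v_h-\phi^2v_h)_\Omega$ is not handled at all. Commuting $\pi_h$ with $\partial_t$ and forming a total derivative only produces $(v_h,(\pi_h-I)(\phi^2\partial_tv_h))_\Omega$. This term, like the original one, is bounded only by $h^{1/2}K^{-1}\|\partial_tv_h\|_\phi\|v_h\|_\phi$. Neither $\|\partial_tv_h\|_\phi$ nor $\|h^{1/2}\mathcal{L}v_h\|_\phi$ is controlled in this lemma, as you note yourself.

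The missing idea is to take $\pi_h$ to be the $L^2$ projection and apply orthogonality to all of $\mathcal{L}v_h$ at once. Since $\partial_tv_h\in V_h$ and $i_{av}$ is the identity on $V_h$, you have $i_{av}\mathcal{L}v_h=\partial_tv_h+i_{av}(\beta\cdot\nabla v_h)\in V_h$. Hence
\[
(\mathcal{L}v_h,\pi_h\phi^2v_h-\phi^2v_h)_\Omega=\bigl(\beta\cdot\nabla v_h-i_{av}(\beta\cdot\nabla v_h),\,\pi_h\phi^2v_h-\phi^2v_h\bigr)_\Omega ,
\]
and the time derivative disappears identically. As $v_h$ is continuous, $\jump{\beta\cdot\nabla v_h}=(\beta\cdot n_F)\jump{\nabla v_h\cdot n_F}$. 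So \cref{eq_Oswald_properties_local} and \cref{eq_bnd_phih_res_volume} bound the right-hand side by $C|\beta|_\infty^{1/2}K^{-1}|\phi v_h|_s\|v_h\|_\phi$, and your Step~4 then closes the argument.

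With this choice, the superapproximation must be shown for the $L^2$ projection rather than for your interpolant, call it $I_h$. It follows from your estimate via $\phi^2v_h-\pi_h\phi^2v_h=(I-\pi_h)(\phi^2v_h-I_h\phi^2v_h)$, together with stability of $\pi_h$ in the $\phi^{-1}$-weighted norm. That stability holds because $\phi^{\pm1}$ vary on the scale $\sigma=K\sqrt h\gg h$.
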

\section{Stability and A priori error estimate} \label{sect_math_analysis}
In this section, we obtain stability and a priori error estimates for the mixed stabilisation method introduced above, based on the localised norms, thereby eventually also only requiring local smoothness of the exact solution. Starting with stability, we first summarise results about discrete projection operators from the literature and obtain estimates about the stabilisation bilinear forms, which will be of relevance for the stability argument.
\subsection{Results on projection operators}
In the stability argument, we want to include derivative contributions from a discrete function $v_h \in W_h$, developing on the insight of the usefulness of such contributions for controlling summands such as $\| \mathcal{L}(v_h)\|^2_\Omega$, or $\| \mathcal{L}(v_h)\|^2_\phi$. This will lead to the well-known technical task of mapping a function $\nabla V_h$ to the discrete space. To this end, we use the Oswald projection operator, denoted by $i_{av}\colon V_{DG} \to V_h$, where $V_{DG} := \{ v \in L^2(\Omega) \, | \, v|_T \in \mathcal{P}^k(T) \, \forall T \in \mathcal{T}_h\}$. It is well-known that this operator is stable and satisfies an error bound: \cite{burman2007continuous}
\begin{lemma}
 For the Oswald interpolation operator $i_{av}$ and all $v \in V_{DG}$, it holds
 \begin{equation}
  \| i_{av} (v) \|_\Omega \lesssim \| v \|_\Omega, \quad \| v - i_{av} (v) \|_\Omega^2 \lesssim \sum_{F \in \mathcal{F}^i} h_F \| \jump{u} \|_F^2
 \end{equation}
\end{lemma}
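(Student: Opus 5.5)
We treat the Oswald operator locally. For $v \in V_{DG}$, $i_{av}(v)$ is defined nodewise: at each Lagrange node $x_j$ of $V_h$, its value is the arithmetic mean of the values $v|_T(x_j)$ over the elements $T$ containing $x_j$. The number of such elements is bounded by shape regularity. We work element by element and use a reference-element scaling argument throughout. On a fixed $T$, the restriction $i_{av}(v)|_T$ is determined by its nodal values, and each nodal value is a mean of nodal values of $v$ on the patch $\omega_T$ of elements sharing a node with $T$.

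\textbf{Stability.} By equivalence of norms on $\mathcal{P}^k(\hat T)$ and affine scaling, $\|p\|_T^2 \sim h_T^d \sum_{x_j \in T} |p(x_j)|^2$ for $p \in \mathcal{P}^k(T)$. Bounding each averaged nodal value by the maximum of the $|v|_{T'}(x_j)|$, and using the same equivalence on each neighbouring $T'$, gives $\|i_{av} v\|_T \lesssim \|v\|_{\omega_T}$. Here we also use that local mesh sizes within a patch are comparable. Summing over $T$, with bounded patch overlap, yields $\|i_{av}(v)\|_\Omega \lesssim \|v\|_\Omega$.

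\textbf{Error bound.} On $T$ we apply the same nodal norm equivalence to $v|_T - i_{av}(v)|_T$, so that $\|v - i_{av} v\|_T^2 \lesssim h_T^d \sum_{x_j\in T} |v|_T(x_j) - i_{av}(v)(x_j)|^2$. Each nodal difference is a convex combination of differences $v|_T(x_j) - v|_{T'}(x_j)$ with $T'$ in the patch around $x_j$. The main step is to connect $T$ and $T'$ by a chain of elements around $x_j$, each consecutive pair sharing a facet $F$ that contains $x_j$. The chain length is bounded by shape regularity. This reduces each difference to a sum of jump values $\jump{v}|_F(x_j)$. For each such facet, the inverse/norm-equivalence estimate on the $(d-1)$-dimensional facet polynomial space gives $|\jump{v}(x_j)|^2 \lesssim h_F^{-(d-1)} \|\jump{v}\|_F^2$. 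Combining, $h_T^d \cdot h_F^{-(d-1)} \sim h_F$, and we obtain $\|v - i_{av} v\|_T^2 \lesssim \sum_{F \in \mathcal{F}^i,\, F\cap \overline T \neq \emptyset} h_F \|\jump{v}\|_F^2$. Summing over $T$ with bounded overlap gives the claim.

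The only delicate point is the chain argument. It is here that we use the fact that, on a shape-regular conforming simplicial mesh, the elements around a vertex, edge or facet node are facet-connected through facets containing that node. With periodic coupling, boundary nodes are treated identically, so no boundary facets enter. Otherwise, boundary nodes simply have fewer contributions.
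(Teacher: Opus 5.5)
Your argument is correct. It is the standard self-contained proof: nodal averaging, norm equivalence on the reference element, and a chain of facets through each shared node. The paper gives no proof here at all; it simply cites \cite{burman2007continuous}, an $hp$ reference in which the dependence on the polynomial degree is tracked.

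What your route buys is the \emph{local} form $\|i_{av}v\|_T \lesssim \|v\|_{\omega_T}$ and $\|v-i_{av}v\|_T^2 \lesssim \sum_{F\in\mathcal{F}^i,\,F\cap\overline{T}\neq\emptyset} h_F\|\jump{v}\|_F^2$. The first is exactly the bound the paper invokes right afterwards in the proof of the weighted version \eqref{eq_Oswald_properties_local}, where $\phi$ is frozen on the patch $\Delta T$. This is legitimate because $\phi$ varies by at most a factor $1+\mathcal{O}(h^{1/2}/K)$ over a patch. The same freezing applied to your local error bound gives the weighted error estimate that the paper takes from the literature instead.

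What your route does not buy is robustness in $k$. The norm equivalence on $\mathcal{P}^k(\hat T)$ and the facet inverse estimate $|\jump{v}(x_j)|^2\lesssim h_F^{-(d-1)}\|\jump{v}\|_F^2$ both carry $k$-dependent constants. The paper's stated convention for $\lesssim$, by contrast, nominally excludes dependence on the polynomial order. However, the paper itself hides $k$-dependent inverse and trace constants under the same symbol elsewhere (e.g.\ in \eqref{eq_weighted_s_bound}). So fixed $k$ is the operative setting, and your proof suffices for it.

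One small correction in the chain step: facet-connectivity of the elements around a node does not follow from shape regularity alone. Take two triangles meeting only at a vertex and $v$ equal to $1$ on one and $0$ on the other. There are no interior facets, so the right-hand side vanishes, yet $i_{av}v\neq v$. The connectivity holds because the domain is a Lipschitz polygon, or the periodic torus, which is the setting the paper has in mind.
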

These results can be stregthened to weighted norm versions.
\begin{lemma}
 For the Oswald interpolation operator $i_{av}$ and all $v \in V_{DG}$, it holds, assuming $h^{1/2}/K$ being sufficiently small,
 \begin{equation}
  \| i_{av} (v) \|_\phi \lesssim \| v \|_\phi, \quad \| v - i_{av} (v) \|_\phi^2 \lesssim \sum_{F \in \mathcal{F}^i} h_F \| \phi \jump{u} \|_F^2 \label{eq_Oswald_properties_local}
 \end{equation}
\end{lemma}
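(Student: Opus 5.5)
The plan is to localise the global Oswald estimates of the preceding lemma elementwise and transfer the weight $\phi$ through local patches, using that $\phi$ varies only on the scale $\sigma = K\sqrt{h}\gg h$. The key observation follows from \eqref{eq_bnd_nabla_phi}. Let $\omega_T$ be the patch of elements sharing a node with $T$, so that $\operatorname{diam}(\omega_T)\lesssim h$. Then for $x,y\in\omega_T$ we have $|\log\phi(x)-\log\phi(y)| \le \sup|\nabla\phi/\phi|\,|x-y| \lesssim h/\sigma = h^{1/2}/K$. Hence, if $h^{1/2}/K$ is sufficiently small, $\phi$ is equivalent on $\omega_T$ to a constant $\phi_T:=\phi(x_T)$ (say at the barycenter), with constants arbitrarily close to $1$: $\tfrac12\phi_T\le\phi\le 2\phi_T$ on $\omega_T$. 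This is precisely where the smallness assumption is used.

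For the stability bound, I will use the standard local version of Oswald stability, which the global statement is built from. Since $i_{av}v$ at a Lagrange node is an average of the values of $v$ from the elements sharing that node, finite-dimensional norm equivalence and scaling give $\|i_{av}v\|_T\lesssim\|v\|_{\omega_T}$. Multiplying by the weight and using the patch equivalence gives
\[
\|\phi\, i_{av}v\|_T \le 2\phi_T\|i_{av}v\|_T \lesssim \phi_T\|v\|_{\omega_T}\le 2\|\phi v\|_{\omega_T}.
\]
Squaring and summing over $T$ then yields $\|i_{av}v\|_\phi\lesssim\|v\|_\phi$, since each element belongs to a uniformly bounded number of patches by shape regularity.

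For the error bound I will argue the same way with the local estimate
\[
\|v-i_{av}v\|_T^2\lesssim\sum_{F\in\mathcal F^i,\,F\cap \bar T\neq\emptyset}h_F\|\jump{v}\|_F^2,
\]
which again follows by scaling from the nodal definition: the nodal error is bounded by differences of element values at that node, and these are controlled by jumps on facets sharing the node. Multiplying by $\phi_T^2$ and using the equivalence $\phi_T\sim\phi$ on each such facet $F\subset\omega_T$ moves the weight inside the facet norm as $\|\phi\jump{v}\|_F$. Summing over $T$ with finite overlap then gives the second inequality in \eqref{eq_Oswald_properties_local}. The statement writes $\jump{u}$, which I read as $\jump{v}$.

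The main obstacle is making the patch-equivalence step rigorous, i.e.\ showing $\sup_{\omega_T}\phi/\inf_{\omega_T}\phi\le e^{Ch^{1/2}/K}$. This follows by integrating $|\nabla\log\phi|\le C\sigma^{-1}$ along segments. On a non-convex patch the segment may leave $\omega_T$, so I would either use that $\phi$ is globally defined, making the segment bound valid anyway, or chain through neighbouring elements. The gradient bound is for $\phi$ itself, and $\phi>0$ everywhere, so $\log\phi$ is well defined. The periodic setting requires no change because the argument is local.
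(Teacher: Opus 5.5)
Your proposal is correct and follows the paper's route. The paper's stability proof also combines the local Oswald bound on the patch $\Delta T$ with the fact that $\phi$ varies by at most a factor $e^{Ch^{1/2}/K}$ across a patch, through the quantity $\bar\phi^{\Delta T}$. Your version correctly ends with the patch norm $\|\phi v\|_{\omega_T}$, whereas the paper's displayed chain ends with $\|v\|_{T,\phi}$ and a garbled factor. For the jump estimate the paper simply cites Burman--Guzm\'an--Leykekhman, and your elementwise localisation with the same patch-equivalence of $\phi$ is the standard argument behind that citation.
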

\begin{proof}
    The error difference estimate is proven in \cite{10.1093/imanum/drn001}. For stability, let $\Delta T$ be the set of elements sharing a face/edge or vertex with $T \in \mathcal{T}_h$, then
\[
\| i_{av} (v) \|_{T,\phi} \leq C \| v \|_{\Delta T} \bar \phi^{\Delta T} \leq C(1-h^{\frac12}/K) \|v \|_{T,\phi}, \textnormal{ where } \bar \phi^{\Delta T} = \max_{x \in \Delta T} \phi.
\]
\end{proof}

In addition, we want to derive discrete counterparts of weighted versions of discrete functions, very often of $\phi^2 v_h$ for some $v_h \in V_h$, which again poses the challenge of obtaining an appropriate discrete function. To this end, we consider the Clément (quasi) interpolation operator $\pi_h$ (see e.g. \cite[Chapter 22.3]{ern2021finite}), which slightly modulates the weighted analysis of the plain CIP stabilised method, where the $L^2$ interpolator was used. We assume the Clément operator to have the following stability and approximation properties in conjunction with weighting functions such as $\phi$, $\phi^{-1}$, $\phi^2$, which parallel those obtained in \cite{burman2022weighted} in regards to the $L^2$ interpolator and can be established by similar techniques.
\begin{assumption}[Stability of $\pi_h$]
 Denoting by $\pi_h$ the $L^2$ projection and by $v \in H^1(\Omega)$ some function, it holds
 \begin{align}
  \| \pi_h v \|_\phi \lesssim \| v\|_\phi, \quad \|\nabla \pi_h v\|_\phi \lesssim \| \nabla v\|_\phi, \quad \| \nabla \pi_h v\|_\phi \lesssim \frac{1}{h} \| v \|_\phi. \label{eq_stab_l2_proj}
 \end{align}
\end{assumption}
\begin{assumption}[Super approximation for $\pi_h \phi^2 v_h$]
 Let $v_h \in V_h$. Assume that $h^{1/2}/ K$ is sufficiently small. Then there holds
 \begin{equation}
  \| \phi^2 v_h - \pi_h(\phi^2 v_h) \|_{\phi^{-1}} + h \| \nabla (\phi^2 v_h - \pi_h(\phi^2 v_h)) \|_{\phi^{-1}} \leq C h^{1/2} K^{-1} \| v_h\|_\phi \label{eq_bnd_phih_res_volume},
 \end{equation}
 and
 \begin{equation}
  \left( \sum_{T \in \mathcal{T}_h} \| \phi^{-1} \nabla (\phi^2 v_h - \pi_h(\phi^2 v_h)) \|_{\partial T} \right)^{1/2} \leq C h^{-1} K^{-1} \| v_h\|_\phi \label{eq_bnd_phih_res_bnd}.
 \end{equation}
\end{assumption}
\subsection{Stabilisation estimates}
We begin to set up the stability analysis by the following observation on the price to pay for moving between $\phi^2 v_h$, which is not discrete any more due to the weight, to the discrete $L^2$ projection thereof, in the second argument of stabilisation bilinear terms.
\begin{lemma}
 It holds for all $v_h, y_h \in V_h$,
 \begin{align}
  s_0 (y_h; v_h, \phi^2 v_h - \pi_h \phi^2 v_h) &\leq C \left( s_0(y_h; \phi v_h, \phi v_h)^{1/2} \cdot \frac{|\beta|^{1/2}_\infty}{K} \| v_h \|_\phi \right) \label{eq_s0_second_arg_pih_res_bound} \\
  s_1 (y_h; v_h, \phi^2 v_h - \pi_h \phi^2 v_h) &\leq C \left( \| h^{1/2} |\beta|^{1/2} \varpi(y_h) \nabla v_h \|_\phi  \cdot \frac{|\beta|^{1/2}_\infty}{K} \| v_h \|_\phi \right) \label{eq_s1_second_arg_pih_res_bound}
 \end{align}
\end{lemma}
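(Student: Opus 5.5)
The plan is to treat both estimates by the same mechanism. Write $\eta := \phi^2 v_h - \pi_h \phi^2 v_h$. Apply Cauchy--Schwarz element by element, inserting the factors $\phi$ and $\phi^{-1}$ so that one factor becomes a weighted stabilisation quantity of $v_h$ and the other a weighted norm of $\nabla \eta$. Then close with the super approximation Assumption, \eqref{eq_bnd_phih_res_volume} for $s_1$ and \eqref{eq_bnd_phih_res_bnd} for $s_0$. The extra powers of $h$ carried by the stabilisations should exactly compensate the negative powers of $h$ in those bounds, leaving $K^{-1}\|v_h\|_\phi$.

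For $s_1$ we have, with $\varpi=\varpi(y_h)$ and since $\varpi\le 1$ and $\phi>0$,
\[
s_1(y_h; v_h, \eta) = \sum_T h_T \varpi |\beta| (\nabla v_h, \nabla \eta)_T \le \| h^{1/2}|\beta|^{1/2}\varpi \nabla v_h\|_\phi \, \| h^{1/2} |\beta|^{1/2} \phi^{-1}\nabla\eta\|_\Omega .
\]
Here $\varpi$ is absorbed entirely into the first factor, using $\varpi\le 1$ in the second. The second factor is bounded by $|\beta|_\infty^{1/2} h^{-1/2}\cdot h\|\nabla\eta\|_{\phi^{-1}} \le C|\beta|_\infty^{1/2} h^{-1/2} h^{1/2}K^{-1}\|v_h\|_\phi$ by \eqref{eq_bnd_phih_res_volume}, which is \eqref{eq_s1_second_arg_pih_res_bound}. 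Local quasi-uniformity ($h_T\sim h$) is used to pass between $h_T$ and $h$.

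For $s_0$, on each $\partial T\setminus\partial\Omega$ the jump $\jump{\nabla \eta\, n_F}$ is bounded by traces from both neighbouring elements. Cauchy--Schwarz gives
\[
s_0(y_h; v_h,\eta) \le \Big(\sum_T h_T^2(1-\varpi)\|\,|\beta|^{1/2}\phi\jump{\nabla v_h n_F}\|^2_{\partial T\setminus\partial\Omega}\Big)^{1/2}\Big(\sum_T h_T^2|\beta|_\infty \|\phi^{-1}\nabla\eta\|^2_{\partial T}\Big)^{1/2},
\]
using $1-\varpi\le1$ in the second factor. The second factor is at most $C|\beta|_\infty^{1/2} h\cdot h^{-1}K^{-1}\|v_h\|_\phi$ by \eqref{eq_bnd_phih_res_bnd}. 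I read \eqref{eq_bnd_phih_res_bnd} as a bound on the sum of squared boundary norms; otherwise the face contributions are reduced to volume terms by a trace inequality combined with \eqref{eq_bnd_phih_res_volume}.

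The first factor is not yet $s_0(y_h;\phi v_h,\phi v_h)^{1/2}$. That quantity involves $\jump{\nabla(\phi v_h) n_F} = \phi\jump{\nabla v_h n_F}$, because $\phi$ is smooth and $v_h$ is continuous, so the tangential derivative parts and $\nabla\phi\, v_h$ have no jump. This is exactly the mechanism behind \eqref{eq_switch_arguments_within_s0}. Hence the first factor equals $s_0(y_h;\phi v_h,\phi v_h)^{1/2}$, giving \eqref{eq_s0_second_arg_pih_res_bound}.

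The main obstacle is the bookkeeping in the $s_0$ estimate. One needs that the element-wise weight $1-\varpi(y_h)$ can be kept on the $v_h$ side while the two-sided traces of $\nabla\eta$ are each controlled on their own element's boundary, and that the scaling of \eqref{eq_bnd_phih_res_bnd} matches $h_T^2$. If the assumption is only available in a less convenient form, I would first fall back on the discrete trace inequality $\|\nabla\eta\|_{\partial T}^2\lesssim h^{-1}\|\nabla\eta\|_T^2 + h\|D^2\eta\|_T^2$. I would control the Hessian of the $\phi^2 v_h$ part via \eqref{eq_bnd_nabla_phi} and inverse inequalities, and the $\pi_h$ part via inverse inequalities. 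Since $\phi$ varies by at most a factor $1+O(h^{1/2}/K)$ over an element patch, these local weights can be moved in and out freely.
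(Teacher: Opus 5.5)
Your proposal is correct and is essentially the paper's proof: the paper writes the first step as $s_0(y_h;v_h,\eta)=s_0(y_h;\phi v_h,\phi^{-1}\eta)\le s_0(y_h;\phi v_h,\phi v_h)^{1/2}\,|\phi^{-1}\eta|_s$, pulling $\phi^{-1}$ out of the jump by continuity of $\eta$ and smoothness of $\phi$, then applies \eqref{eq_bnd_phih_res_bnd} read (as you read it) as a bound on the sum of squared boundary norms, and its $s_1$ estimate via \eqref{eq_bnd_phih_res_volume} coincides with yours. The only caution concerns your fallback: bounding $D^2(\phi^2 v_h)$ and $D^2\pi_h(\phi^2 v_h)$ separately by inverse inequalities gives merely $\sum_T h_T^3\|\phi^{-1}D^2\eta\|_T^2\lesssim h^{-1}\|v_h\|_\phi^2$, which loses the $K^{-2}$ gain; instead, on each $T$ subtract the frozen-weight polynomial $\phi(x_T)^2 v_h$ and apply the inverse inequality only to the discrete remainder $\phi(x_T)^2 v_h-\pi_h(\phi^2 v_h)$.
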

\begin{proof}
 \Cref{eq_s0_second_arg_pih_res_bound}: We begin with a Cauchy-Schwarz inequality with recalibrated weighting:
 \begin{align*}
  s_0 (y_h; v_h, \phi^2 v_h - \pi_h \phi^2 v_h) &= s_0 (y_h; \phi v_h, \frac{1}{\phi} ( \phi^2 v_h - \pi_h \phi^2 v_h) ) \\
  &\leq s_0(y_h; \phi v_h, \phi v_h)^{1/2} \cdot | \frac{1}{\phi}( \phi^2 v_h - \pi_h \phi^2 v_h) |_s
 \end{align*}
 In relation to the second factor, note that by definition of $| \cdot |_s$
 \begin{equation*}
  | \frac{1}{\phi}( \phi^2 v_h - \pi_h \phi^2 v_h) |_s \leq h |\beta|^{1/2}_\infty \left( \sum_{T \in \mathcal{T}_h} \| \jump { \nabla (\frac{1}{\phi}( \phi^2 v_h - \pi_h \phi^2 v_h))} \|_{\partial T}^2 \right)^{1/2}
 \end{equation*}
 Applying \Cref{eq_bnd_phih_res_bnd}, noting that both $v_h$ and $\nabla \phi^{-1}$ are continuous along element boundaries, then yields the result.
 
 \Cref{eq_s1_second_arg_pih_res_bound}: We begin similarly with Cauchy-Schwarz:
 \begin{equation*}
  s_1 (y_h; v_h, \phi^2 v_h - \pi_h \phi^2 v_h) \leq \| h^{1/2} |\beta|^{1/2} \varpi(y_h) \nabla v_h \|_\phi \left\| \frac{h^{1/2} |\beta|^{1/2}}{\phi} \nabla (\phi^2 v_h - \pi_h \phi^2 v_h)\right\|_\Omega
 \end{equation*}
 In relation to the second term, we finish the proof by observing
 \begin{align*}
  \left\| \frac{h^{1/2} |\beta|^{1/2}}{\phi} \nabla (\phi^2 v_h - \pi_h \phi^2 v_h)\right\|_\Omega &\leq h^{1/2} |\beta|_\infty^{1/2} \| \nabla (\phi^2 v_h - \pi_h \phi^2 v_h)\|_{\phi^{-1}} \\
  &\lesssim \frac{|\beta|_\infty^{1/2}}{K} \| v \|_\phi \quad \textnormal{ by } \cref{eq_bnd_phih_res_volume}.
 \end{align*}
\end{proof}
Next, we continue with the localised counterparts of \Cref{lemma6_burman23}, i.e. the control based on $\mathcal{L}(v_h)$-ish second arguments in the stabilisation bilinear forms.
\begin{lemma}
 It holds for all $v_h \in V_h$
 \begin{align}
  |s_0 (0; v_h, \theta h \pi_h \phi^2 i_{av} \mathcal{L} (v_h))| \lesssim ~ \theta^{1/2} s_0 (0; v_h, \phi^2 v_h) + \theta^{3/2} |\beta|_\infty \| h^{1/2} \mathcal{L}(v_h)\|^2_\phi \label{eq_est_resid_in_second_arg_s0}
 \end{align}
\end{lemma}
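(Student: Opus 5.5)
The plan is to split the second argument as
\[
\theta h \pi_h \phi^2 i_{av}\mathcal{L}(v_h) = \theta h\, \phi^2 i_{av}\mathcal{L}(v_h) + \theta h\,(\pi_h - I)(\phi^2 i_{av}\mathcal{L}(v_h)).
\]
This separates a term that can be handled exactly like \Cref{lemma6_burman23}, now with weights, from a super-approximation remainder.

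\textbf{First term.} I would move the weight across to the first argument using smoothness of $\phi$. Since $\phi$ is smooth, the jump of $\nabla(\phi^2 z)$ equals $\phi^2\jump{\nabla z}$ for a continuous piecewise polynomial $z$, exactly as in \cref{eq_switch_arguments_within_s0}. Hence
\[
s_0(0; v_h, h\phi^2 i_{av}\mathcal{L} v_h) = s_0(0; \phi v_h, h\phi\, i_{av}\mathcal{L} v_h).
\]
Here I also use that $\phi$ is continuous across facets, so it commutes with jumps of gradients of continuous functions. Cauchy--Schwarz then gives the bound $|\phi v_h|_s\, |h\phi\, i_{av}\mathcal{L} v_h|_s$.

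For the second factor, a trace and inverse inequality on each element give
\[
h^2\|\jump{\nabla (h\phi w)}\|_F^2 \lesssim h\|\phi w\|_{\Delta T}^2 ,
\]
up to the harmless $\nabla\phi$ contributions, which by \cref{eq_bnd_nabla_phi} are of relative size $h^{1/2}/K$. Applying this with $w=i_{av}\mathcal{L}v_h$ and then the weighted Oswald stability \cref{eq_Oswald_properties_local} yields
\[
|h\phi\, i_{av}\mathcal{L} v_h|_s \lesssim |\beta|_\infty^{1/2}\|h^{1/2}\mathcal{L}v_h\|_\phi .
\]
Here I use that $\phi$ varies only by a factor $1+O(h^{1/2}/K)$ over element patches. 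The first term is therefore bounded by $\theta\,|\phi v_h|_s\,|\beta|_\infty^{1/2}\|h^{1/2}\mathcal{L}v_h\|_\phi$.

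\textbf{Second term.} I would treat the remainder exactly as in the proof of \cref{eq_s0_second_arg_pih_res_bound}: rescale by $\phi^{\pm1}$, apply Cauchy--Schwarz, and then use the boundary super-approximation \cref{eq_bnd_phih_res_bnd} with $v_h$ replaced by $i_{av}\mathcal{L}v_h \in V_h$. Its contribution is bounded by
\[
\theta h\,|\phi v_h|_s\, h^{-1}K^{-1}|\beta|_\infty^{1/2}\,\|i_{av}\mathcal{L}v_h\|_\phi .
\]
This is not yet of the right form, because it carries no $h^{1/2}$ in front of $\mathcal{L}v_h$. I expect this step to be the main obstacle. To fix it I would not use the crude bound \cref{eq_bnd_phih_res_bnd}. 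Instead I would use the local Clément estimate on each patch: $\nabla(\phi^2 w - \pi_h\phi^2 w)$ on $\partial T$ is controlled by $h^{k}|D^{k+1}(\phi^2 w)|$. Because $w$ is a polynomial of degree $k$, $D^{k+1}(\phi^2 w)$ involves only derivatives of $\phi$, each carrying a factor $\sigma^{-1}\phi$. Each of these gains a factor $h^{1/2}/K$, so this term is even smaller than the first. It is again bounded by $\theta\,|\phi v_h|_s\,|\beta|_\infty^{1/2}\|h^{1/2}\mathcal{L}v_h\|_\phi$, up to constants, provided $h^{1/2}/K$ is small.

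\textbf{Conclusion.} Both terms are bounded by
\[
C\,\theta\, |\phi v_h|_s\, |\beta|_\infty^{1/2}\|h^{1/2}\mathcal{L}v_h\|_\phi .
\]
Young's inequality in the form $ab \le \tfrac12\theta^{-1/2}a^2+\tfrac12\theta^{1/2}b^2$, applied with $a=\theta|\phi v_h|_s$ and $b=|\beta|_\infty^{1/2}\|h^{1/2}\mathcal{L}v_h\|_\phi$, gives $\theta^{1/2}|\phi v_h|_s^2+\theta^{3/2}|\beta|_\infty\|h^{1/2}\mathcal{L}v_h\|_\phi^2$. Identifying $|\phi v_h|_s^2 = s_0(0;v_h,\phi^2 v_h)$ by \cref{eq_switch_arguments_within_s0} yields \cref{eq_est_resid_in_second_arg_s0}.
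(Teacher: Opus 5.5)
Your decomposition leads to a valid proof, but by a different route from the paper. The paper never splits off $(\pi_h-I)$. It writes the Cauchy--Schwarz bound as $|\theta^{1/4}\phi v_h|_s\,|\theta^{3/4}\phi^{-1}h\,\pi_h\phi^2 i_{av}\mathcal{L}(v_h)|_s$ and pulls $\phi^{-1}$ out of the jump. Because $\pi_h\phi^2 i_{av}\mathcal{L}(v_h)$ is itself discrete, it applies trace and inverse inequalities to it directly. The inverse-type stability $\|\nabla\pi_h z\|_{\phi^{-1}}\lesssim h^{-1}\|z\|_{\phi^{-1}}$ (the $\phi^{-1}$-weighted form of \cref{eq_stab_l2_proj}) and the Oswald stability \cref{eq_Oswald_properties_local} then finish in three lines, with no super-approximation. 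Your split treats the principal part without using any property of $\pi_h$. The projection goes into a remainder controlled by \cref{eq_bnd_phih_res_bnd}, exactly as in \cref{eq_s0_second_arg_pih_res_bound}. This is longer and uses the super-approximation assumption instead of the stability one, but it shows that the projection contributes only at relative order $h^{1/2}/K$.

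Two computations need fixing.

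\textbf{The remainder term.} The obstacle you anticipate there does not exist: you dropped the factor $h$ that comes from the $h^2$ weight in $|\cdot|_s$. With $w=i_{av}\mathcal{L}(v_h)$, \cref{eq_bnd_phih_res_bnd} correctly gives
\[
|\theta h\,\phi^{-1}(\phi^2 w-\pi_h\phi^2 w)|_s \lesssim \theta h\cdot h|\beta|_\infty^{1/2}\cdot h^{-1}K^{-1}\|w\|_\phi = \theta\,\frac{h^{1/2}}{K}\,|\beta|_\infty^{1/2}\|h^{1/2}w\|_\phi ,
\]
which is already of the required form. Your local Clément argument does not improve on this. The factor $h^{1/2}/K$ gained from differentiating $\phi$ is exactly what produces the $h^{-1}K^{-1}$ in \cref{eq_bnd_phih_res_bnd}. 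Moreover, that argument would have to be run elementwise, since $\phi^2 w\notin H^{k+1}$ across facets. Simply cite \cref{eq_bnd_phih_res_bnd}.

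\textbf{Young's inequality.} Your step has the powers of $\theta$ reversed. With $a=\theta|\phi v_h|_s$, the weights $\tfrac12\theta^{-1/2}$ and $\tfrac12\theta^{1/2}$ give $\theta^{3/2}|\phi v_h|_s^2+\theta^{1/2}|\beta|_\infty\|h^{1/2}\mathcal{L}(v_h)\|_\phi^2$. You need $ab\le\tfrac12\theta^{-3/2}a^2+\tfrac12\theta^{3/2}b^2$, which is equivalent to the paper's split $\theta=\theta^{1/4}\theta^{3/4}$. This matters in \Cref{prop:weight_stab}. There the $\|h^{1/2}\mathcal{L}(v_h)\|_\phi^2$ contribution must carry a power of $\theta$ larger than one, so that it can be absorbed by the $\tfrac{\theta}{2}\|h^{1/2}\mathcal{L}(v_h)\|_\phi^2$ coming from $\Delta_{R2}$ when $\theta$ is small.
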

\begin{proof}
 We start with a Cauchy-Schwarz inequality
 \begin{equation*}
  |s_0 (0; v_h, \theta h \pi_h \phi^2 i_{av} \mathcal{L} (v_h))| \leq |\theta^{1/4} \phi v_h|_s \cdot |\theta^{3/4} \phi^{-1} h \pi_h \phi^2 i_{av} \mathcal{L} (v_h)|_s
 \end{equation*}
 We apply Young's inequality now and focus on the second expression
 \begin{align*}
  |\theta^{3/4} \phi^{-1} h \pi_h \phi^2 i_{av} \mathcal{L} (v_h)|_s^2 &\leq h^4 \theta^{3/2} |\beta|_\infty \sum_{T \in \mathcal{T}_h} \| \jump{\nabla \phi^{-1} \pi_h \phi^2 i_{av} \mathcal{L} (v_h)} \|_{\partial T \backslash \partial \Omega}^2 \\
  &\leq h^4 \theta^{3/2} |\beta|_\infty \sum_{T \in \mathcal{T}_h} \| \jump{\phi^{-1} \nabla \pi_h \phi^2 i_{av} \mathcal{L} (v_h)} \|_{\partial T \backslash \partial \Omega}^2,
 \end{align*}
 as $\phi$ is continuous. Then, by standard trace and discrete inverse inequality
 \begin{align*}
  & |\theta^{3/4} \phi^{-1} h \pi_h \phi^2 i_{av} \mathcal{L} (v_h)|_s^2 \lesssim h^3 \theta^{3/2} |\beta|_\infty \| \nabla \pi_h \phi^2 i_{av} \mathcal{L}(v_h)\|_{\phi^{-1}}^2 \\
  \lesssim & ~ h \theta^{3/2} |\beta|_\infty \| i_{av} \mathcal{L}(v_h)\|_\phi^2 \lesssim \theta^{3/2} |\beta|_\infty \| h^{1/2} \mathcal{L}(v_h)\|^2_\phi \textnormal{ by } \cref{eq_stab_l2_proj}, \cref{eq_Oswald_properties_local}
 \end{align*}

\end{proof}

\begin{lemma} It holds for all $v_h, y_h \in V_h$
 \begin{align}
  & |s_1 (y_h; v_h, \theta h \pi_h \phi^2 i_{av} \mathcal{L} (v_h))| \nonumber \\
  \lesssim & ~ \theta^{1/2} \| h^{1/2} | \beta |^{1/2} \varpi(y_h)^{1/2} \nabla v_h \|^2_\phi + \theta^{3/2} |\beta|_\infty (1+ h^3/K^2) \| h^{1/2} \mathcal{L}(v_h)\|^2_\phi \label{eq_est_resid_in_second_arg_s1}
 \end{align}
\end{lemma}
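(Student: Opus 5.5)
We argue as in the proof of \cref{eq_est_resid_in_second_arg_s0}, with the facet jumps replaced by volumetric gradients. Write $z_h := \pi_h \phi^2 i_{av} \mathcal{L}(v_h)$.

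First, apply a weighted Cauchy--Schwarz inequality elementwise to $s_1$, splitting the weight $\varpi(y_h) = \varpi(y_h)^{1/2}\varpi(y_h)^{1/2}$ and $1 = \phi\,\phi^{-1}$, with factors $\theta^{1/4}$ and $\theta^{3/4}$:
\begin{equation*}
|s_1(y_h; v_h, \theta h z_h)| \leq \theta^{1/4}\| h^{1/2}|\beta|^{1/2}\varpi(y_h)^{1/2}\nabla v_h\|_\phi \cdot \theta^{3/4}\| h^{1/2}|\beta|^{1/2}\varpi(y_h)^{1/2}\nabla (h z_h)\|_{\phi^{-1}}.
\end{equation*}
Young's inequality then gives the first term of the claim, $\theta^{1/2}\| h^{1/2}|\beta|^{1/2}\varpi(y_h)^{1/2}\nabla v_h\|_\phi^2$, plus $\theta^{3/2}|\beta|_\infty h^3 \|\nabla z_h\|_{\phi^{-1}}^2$, where we used $\varpi \le 1$.

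For the second factor, we do not apply the inverse inequality to $\pi_h(\phi^2 i_{av}\mathcal{L}v_h)$ directly. Instead we split $z_h = \phi^2 i_{av}\mathcal{L}(v_h) - (\phi^2 q_h - \pi_h \phi^2 q_h)$ with $q_h := i_{av}\mathcal{L}(v_h) \in V_h$.

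\textbf{First piece.} By the product rule,
\begin{equation*}
\phi^{-1}\nabla(\phi^2 q_h) = 2(\nabla\phi)\, q_h + \phi \nabla q_h.
\end{equation*}
By \cref{eq_bnd_nabla_phi}, the first summand is bounded by $C K^{-1}h^{-1/2}\|q_h\|_\phi$. The inverse inequality (localised, with $\phi$ nearly constant on patches since $h^{1/2}/K$ is small) bounds the second by $C h^{-1}\|q_h\|_\phi$. Finally, \cref{eq_Oswald_properties_local} gives $\|q_h\|_\phi \lesssim \|\mathcal{L}v_h\|_\phi$. Multiplying by $h^3$, this contributes $\theta^{3/2}|\beta|_\infty(h + h^2/K^2)\|\mathcal{L}v_h\|_\phi^2 \lesssim \theta^{3/2}|\beta|_\infty\|h^{1/2}\mathcal{L}v_h\|_\phi^2$.

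\textbf{Second piece.} This is controlled by super approximation, \cref{eq_bnd_phih_res_volume} applied with $q_h$ in place of $v_h$:
\begin{equation*}
h\|\nabla(\phi^2 q_h - \pi_h\phi^2 q_h)\|_{\phi^{-1}} \le C h^{1/2}K^{-1}\|q_h\|_\phi.
\end{equation*}
Hence $h^3\|\cdot\|^2 \lesssim h^2\cdot h K^{-2}\|\mathcal{L}v_h\|_\phi^2$, i.e. a term $\theta^{3/2}|\beta|_\infty (h^3/K^2)\,\|h^{1/2}\mathcal{L}v_h\|_\phi^2$ up to the bookkeeping of $h$-powers. This accounts for the factor $(1+h^3/K^2)$ in the claim.

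Alternatively, one could simply use the third inequality of \cref{eq_stab_l2_proj} as in the $s_0$ lemma, giving $h^3\cdot h^{-2}\|\phi^2 q_h\|_{\phi^{-1}}^2 = h\|q_h\|_\phi^2$. That route yields the bound without the extra term, so either route suffices.

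\textbf{Main obstacle.} The delicate point is the transfer between the weights $\phi^{-1}$ and $\phi$ in the stability estimate of $\pi_h$. The given assumption is stated in the $\phi$-norm. We need it for $\phi^{-1}$ applied to $\phi^2 q_h$, i.e. $\|\nabla\pi_h(\phi^2 q_h)\|_{\phi^{-1}} \lesssim h^{-1}\|q_h\|_\phi$. This is where the super-approximation splitting above is the safe route, since it only uses the stated assumptions together with \cref{eq_bnd_nabla_phi}.

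Collecting the two pieces yields the claim.
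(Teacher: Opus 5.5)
Your proof is correct. Its core is the paper's argument: the $\theta^{1/4}/\theta^{3/4}$ Cauchy--Schwarz/Young split with $\varpi(y_h)\le 1$, the product rule $\phi^{-1}\nabla(\phi^2 q_h)=2(\nabla\phi)q_h+\phi\nabla q_h$, a weighted inverse inequality, the bound on $\nabla\phi$, and weighted Oswald stability.

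The genuine difference is how you handle the projection. Right after Cauchy--Schwarz, the paper writes the second factor with $\nabla\phi^2 i_{av}\mathcal{L}(v_h)$ and so silently drops $\pi_h$. You instead split off $\phi^2 q_h-\pi_h(\phi^2 q_h)$ and control it by the super-approximation bound \eqref{eq_bnd_phih_res_volume} applied to $q_h\in V_h$. This costs one line but makes the argument complete from the assumptions as stated. In particular, it avoids the $\phi^{-1}$-weighted version of \eqref{eq_stab_l2_proj} that your ``alternative'' route, and the paper's $s_0$ lemma, tacitly use.

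One bookkeeping correction. The super-approximation piece contributes
$h^3(Ch^{-1/2}K^{-1})^2\|q_h\|_\phi^2\lesssim (h/K^2)\,\|h^{1/2}\mathcal{L}(v_h)\|_\phi^2$,
not an $h^3/K^2$ term. So it does not explain the factor $(1+h^3/K^2)$. In the paper, that factor comes from the product-rule term, where the misprinted bound $|\nabla\phi|\le CK^{-1}h^{1/2}\phi$ is used. The correct bound $C\sigma^{-1}\phi=CK^{-1}h^{-1/2}\phi$, which you used, gives $h^2/K^2$ there instead of $h^4/K^2$. Since $h/K^2=(h^{1/2}/K)^2$ is small under the standing assumption, all these contributions are $\lesssim\theta^{3/2}|\beta|_\infty\|h^{1/2}\mathcal{L}(v_h)\|_\phi^2$. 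The stated bound then follows because $1+h^3/K^2\ge 1$.
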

\begin{proof}
 We start by writing out the given expression and apply Cauchy-Schwarz
 \begin{align*}
  & |s_1 (y_h; v_h, \theta h \pi_h \phi^2 i_{av} \mathcal{L} (v_h))| \\
  = &| (\theta^{1/4} h^{1/2} | \beta|^{1/2} \varpi(y_h)^{1/2} \phi \nabla v_h, \theta^{3/4} h^{3/2} | \beta|^{1/2} \varpi(y_h)^{1/2} \phi^{-1} \nabla \phi^2 i_{av} \mathcal{L}(v_h))_\Omega |
 \end{align*}
 We apply Young's inequality on this term. Noting that $\varpi(y_h) \leq 1$, we observe in relation to the second term
 \begin{align*}
  & \|\theta^{3/4} h^{3/2} | \beta|^{1/2} \varpi(y_h)^{1/2} \nabla \phi^2 i_{av} \mathcal{L}(v_h)\|_{\phi^{-1}}^2  \leq  h^3 \theta^{3/2} | \beta|_\infty \|\nabla \phi^2 i_{av} \mathcal{L}(v_h)\|_{\phi^{-1}}^2 \\
  \leq & ~ 2 h^3 \theta^{3/2} |\beta|_\infty \| \nabla i_{av} \mathcal{L}(v_h) \|_\phi^2 + 4 h^3 \theta^{3/2} |\beta|_\infty \| (\nabla \phi) i_{av} \mathcal{L}(v_h)\|_{\Omega}^2
 \end{align*}
 In relation to the left hand side term, note that \Cref{eq_stab_l2_proj} implies a suiting local inequality as for $v_h \in V_h$, $\pi_h v_h = v_h$. Moreover, in relation to the right hand side term, we apply \Cref{eq_bnd_nabla_phi}, so that overall
 \begin{align*}
  &h^3 \theta^{3/2} | \beta|_\infty \|\nabla \phi^2 i_{av} \mathcal{L}(v_h)\|_{\phi^{-1}}^2 \lesssim h \theta^{3/2} | \beta|_\infty \| i_{av} \mathcal{L}(v_h) \|_\phi^2 + \frac{h^4 |\beta|_\infty}{K^2} \theta^{3/2} \| i_{av} \mathcal{L}(v_h) \|_\phi^2 \\
  \lesssim ~ & \theta^{3/2} |\beta|_\infty \left( h + \frac{h^4}{K^2} \right) \| \mathcal{L}(v_h) \|_{\phi}^2 \textnormal{ by } \cref{eq_Oswald_properties_local}.
 \end{align*}
\end{proof}
Next, we can pose the following weighted version of \Cref{lemma3_burman23}, which relates $s_0$ activated everywhere to the actually considered stabilisation combination.
\begin{lemma} It holds for all $v_h, y_h \in V_h$
\begin{equation}
 s_0 (0; \phi v_h, \phi v_h) \leq C \left( s_0(y_h; \phi v_h, \phi v_h) + \| h^{1/2} |\beta|^{1/2} \varpi(y_h)^{1/2} \nabla v_h \|_\phi^2 \right) \label{eq_s_0_zero_weight_vs_s_sum_y_h}
\end{equation}
\end{lemma}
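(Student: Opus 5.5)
The plan is to reduce the weighted statement to the unweighted \Cref{lemma3_burman23} and pay only for the derivatives of $\phi$. First split $s_0(0;\phi v_h,\phi v_h)$ element by element:
\[
s_0(0;\phi v_h,\phi v_h)=s_0(y_h;\phi v_h,\phi v_h)+\sum_{T\in\mathcal{T}_h} h_T^2\,\varpi(y_h)|_T\,\big(|\beta|\jump{\nabla(\phi v_h)\cdot n_F},\jump{\nabla(\phi v_h)\cdot n_F}\big)_{\partial T\setminus\partial\Omega}.
\]
The first term already appears on the right-hand side of \cref{eq_s_0_zero_weight_vs_s_sum_y_h}. It remains to bound the second, $\varpi$-weighted sum by $C\|h^{1/2}|\beta|^{1/2}\varpi(y_h)^{1/2}\nabla v_h\|_\phi^2$.

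Since $\phi$ is smooth, $\jump{\nabla(\phi v_h)\cdot n_F}=\phi\jump{\nabla v_h\cdot n_F}$: both $v_h$ and $\nabla\phi$ are continuous across facets, which is the same observation used in \cref{eq_switch_arguments_within_s0}. On each $T$ the facet term is therefore $h_T^2\varpi|_T\,|\beta|\,\|\phi\jump{\nabla v_h\cdot n_F}\|_{\partial T\setminus\partial\Omega}^2$. Bound the jump by the sum of the two one-sided traces. The trace from $T$ itself is handled by a discrete trace inequality for $\phi\nabla v_h$ on $T$. Here $\phi$ is not polynomial, so freeze it: with $\bar\phi^T=\max_T\phi$ and $\underline\phi^T=\min_T\phi$, \cref{eq_bnd_nabla_phi} with $\sigma=K\sqrt h$ gives $\bar\phi^T/\underline\phi^T\le \exp(Ch/\sigma)\le 1+Ch^{1/2}/K$. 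Hence, for $h^{1/2}/K$ small, $\|\phi\nabla v_h\|_{\partial T}^2\lesssim h_T^{-1}\|\phi\nabla v_h\|_T^2$. This yields the bound $h_T\varpi|_T|\beta|\,\|\phi\nabla v_h\|_T^2$, which is exactly the local contribution to $\|h^{1/2}|\beta|^{1/2}\varpi(y_h)^{1/2}\nabla v_h\|_\phi^2$.

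The main obstacle is the trace from the neighbouring element $T'$ across $F\subset\partial T$. It carries the weight $\varpi|_T$ rather than $\varpi|_{T'}$, and $\varpi$ is discontinuous, so it cannot be absorbed directly into the $s_1$-norm on $T'$. This is precisely the difficulty resolved in the unweighted \Cref{lemma3_burman23}. The plan is to reuse its argument elementwise: it controls the jump contributions of $T$ through $\varpi|_T h_T\|\nabla v_h\|^2$ on $T$ together with $(1-\varpi|_T)$ times the $s_0$ facet term, by splitting $\varpi|_T\jump{\cdot}=\varpi|_T(\nabla v_h|_T-\nabla v_h|_{T'})\cdot n_F$ and keeping only the one-sided contribution from $T$. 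In the same way, the neighbouring trace is controlled by $\min(1,\cdot)$-type comparison of $\varpi$ values, or it is absorbed into $s_0(y_h;\cdot,\cdot)$ on $T'$ if $\varpi|_{T'}<1$.

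If the cited argument is used as a black box, I would apply \Cref{lemma3_burman23} to the local patch of $T$ and its face neighbours with $\phi$ replaced by the constant $\bar\phi^{\Delta T}$. The comparison $\bar\phi^{\Delta T}\le(1+Ch^{1/2}/K)\phi$ on $\Delta T$ then makes the patch-wise unweighted estimates sum to the weighted one. Finally, finite overlap of the patches (shape regularity) fixes the constant $C$.
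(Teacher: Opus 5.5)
Your proposal has a genuine gap at the crux of the lemma, which is the control of the neighbour's trace. The reduction you state is false for general weights $0\le\varpi\le 1$, and nothing more is known about $\varpi(y_h)$ for arbitrary $y_h$. You claim it remains to bound $\sum_T h_T^2\varpi|_T|\beta|\,\|\phi\jump{\nabla v_h\cdot n_F}\|^2_{\partial T\setminus\partial\Omega}$ by the $s_1$-type norm alone. Counterexample: let $\varpi=0$ on every element except $T$, and let $v_h$ be constant on $T$ but have a nonzero normal-derivative jump across a facet $F=T\cap T'$. Then the $\varpi$-weighted jump sum is positive, while $\|h^{1/2}|\beta|^{1/2}\varpi^{1/2}\nabla v_h\|_\phi=0$.

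For the same reason, the mechanism you attribute to the unweighted lemma cannot work. It pays only with quantities on $T$ and keeps only the one-sided trace from $T$, but the neighbour's trace must be paid for by quantities living on $T'$.

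Your fallbacks do not close the gap either:
\begin{itemize}
\item Absorbing the term into $s_0(y_h;\cdot,\cdot)$ on $T'$ ``if $\varpi|_{T'}<1$'' costs a factor $(1-\varpi|_{T'})^{-1}$, which is not uniformly bounded.
\item There is no comparability of $\varpi$ on neighbouring elements: $R_T$ is an elementwise $L^\infty$ quantity of an arbitrary $y_h$, so only $0\le\varpi\le1$ is available.
\item The unweighted lemma is a global inequality. It cannot be applied as a black box on patches $\Delta T$; localising it means redoing its proof.
\end{itemize}

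The missing idea is a uniform threshold, combined with the fact that $s_0$ is a sum over element boundaries. Every interior facet is therefore counted once from each adjacent element, each time with that element's factor $1-\varpi$.

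The paper splits the elements into $\mathcal{T}_-=\{\varpi\le 1/2\}$ and $\mathcal{T}_+=\{\varpi>1/2\}$:
\begin{itemize}
\item Contributions from $\mathcal{T}_-$ are bounded by $2s_0(y_h;\phi v_h,\phi v_h)$, since $1\le 2(1-\varpi)$ there.
\item A facet of a $\mathcal{T}_+$ element that is adjacent to $\mathcal{T}_-$ is charged to the $\mathcal{T}_-$ side of the same facet.
\item On a facet between two $\mathcal{T}_+$ elements, the jump is split into one-sided traces. Each trace is multiplied by $1\le 2\varpi$ of its own element and handled by the trace inequality.
\end{itemize}

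Your own decomposition can be repaired the same way. For a pair $(T,F)$ with neighbour $T'$:
\begin{itemize}
\item If $\varpi|_{T'}\le 1/2$, use $\varpi|_T\le 1\le 2(1-\varpi|_{T'})$ to charge the whole facet term to the $T'$-side contribution of $F$ in $s_0(y_h;\phi v_h,\phi v_h)$.
\item If $\varpi|_{T'}>1/2$, split the jump and use $\varpi|_T\le 1< 2\varpi|_{T'}$ for the trace coming from $T'$.
\end{itemize}
Your identity $\jump{\nabla(\phi v_h)\cdot n_F}=\phi\jump{\nabla v_h\cdot n_F}$ and your frozen-$\phi$ trace inequality ($\bar\phi^T/\underline\phi^T\le 1+Ch^{1/2}/K$) are correct and then finish the argument.
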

\begin{proof} Let $y_h \in V_h$ be an arbitrary fixed weighting function. We follow the structure of \cite[Proof of Lemma 3]{burman2023someobservations} and start by introducing a decomposition by elements with dominating $s_0$ or $s_1$ stabilisation: $\mathcal{T}_- = \{ T \in \mathcal{T}_h \, | \, \varpi(y_h)|_T \leq \frac{1}{2} \}$,  $\mathcal{T}_+ = \{ T \in \mathcal{T}_h \, | \, \varpi(y_h)|_T > \frac{1}{2} \}$
 \begin{equation}
  |\phi v_h|_s^2 = \sum_{T \in \mathcal{T}_-} h^2 |\beta| \| \jump{\phi \nabla v_h} \|_{\partial T\backslash \partial \Omega}^2 + \sum_{T \in \mathcal{T}_+} h^2 |\beta| \| \phi \jump{\nabla v_h} \|_{\partial T\backslash \partial \Omega}^2 = S_1 + S_2,
 \end{equation}
 because of the continuity of $\phi$ along element boundaries. In relation to $S_1$, note that $1 \leq 2 (1 - \varpi(y_h)|_T)$ for all $T \in \mathcal{T}_-$, so that $S_1 \leq s_0(y_h; \phi v_h, \phi v_h)$. Moreover, we split the faces of elements in $\mathcal{T}_+$ into those interior to $\mathcal{T}_+$ and those adjacent to $\mathcal{T}_-$. For this purpose, let on each $T \in \mathcal{T}_+$ denote $\kappa_{\partial T}$ a function which is constant on each facet in $\partial T$. In particular, let $\kappa_{\partial T}$ be 0 on facets adjacent to $\mathcal{T}_-$, and 1 on facets interior to $\mathcal{T}_+$. Then, we can bound
 \begin{align*}
  S_1 + S_2 &\leq 4 s_0 (y_h; \phi v_h, \phi v_h) + 2 \sum_{T \in \mathcal{T}_+} h^2 \varpi (y_h) |\beta| \| \kappa_{\partial T} \phi \nabla v_h \|_{\partial T \backslash \partial \Omega}^2 \\
  &\lesssim s_0 (y_h; \phi v_h, \phi v_h) + \sum_{T \in \mathcal{T}_+} h^2 \varpi (y_h) |\beta| \| \phi \nabla v_h \|_{\partial T}^2,
 \end{align*}
 since over faces where $\kappa_{\partial T} =1$, the factor $\varpi_T(y_h)\in [1/2, 1]$ cannot vary by more than a factor of 2. We apply a weighted trace inequality on the discrete $\nabla v_h$ and observe
 \begin{align*}
  \sum_{T\in \mathcal{T}_+} h^2 \varpi (y_h) |\beta| \| \phi \nabla v_h \|_{\partial T}^2 \lesssim \sum_{T\in \mathcal{T}_+} h \varpi(y_h) |\beta| \| \phi \nabla v_h \|_{T}^2 \lesssim \| h^{1/2} |\beta|^{1/2} \varpi(y_h)^{1/2} \nabla v_h \|_\phi^2
 \end{align*}
 This completes the proof.
\end{proof}
We collect some terms in the following lemma for improved overview in the actual inf-sup stability proof.
\begin{lemma}
 Let $v_h, y_h \in V_h$ be arbitrary and define $w_h := \pi_h \phi^2 (v_h + \theta h i_{av} \mathcal{L}(v_h))$. Assume that $h^{1/2}/K < C$. Then, it holds
 \begin{align}
  &\sigma_0 s_0(y_h; v_h, w_h) + \sigma_1 s_1(y_h; v_h, w_h) \label{eq_est_stab_combination} \\
  \geq & ~ C (\sigma_1 - \theta^{1/2}) \| h^{1/2} |\beta|^{1/2} \varpi(y_h)^{1/2} \nabla v_h \|^2_\phi + C (\min(\sigma_0, \sigma_1) - \theta^{1/2} \sigma_0) s_0(0; v_h, \phi^2 v_h)  \nonumber \\
  & - \frac{C}{K^2} \max(\sigma_0, \sigma_1) |\beta|_\infty \| v_h \|_\phi^2 - C \theta^{3/2} \max(\sigma_0, \sigma_1) |\beta|_\infty \| h^{1/2} \mathcal{L}(v_h)\|_\phi^2 \nonumber
 \end{align}
\end{lemma}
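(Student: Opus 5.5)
We split $w_h$ by linearity of the stabilisation forms in their second argument:
\[
w_h = \phi^2 v_h - (\phi^2 v_h - \pi_h \phi^2 v_h) + \theta h\, \pi_h \phi^2 i_{av}\mathcal{L}(v_h),
\]
so that for $n=0,1$
\[
s_n(y_h; v_h, w_h) = s_n(y_h; v_h, \phi^2 v_h) - s_n(y_h; v_h, \phi^2 v_h - \pi_h\phi^2 v_h) + s_n(y_h; v_h, \theta h \pi_h \phi^2 i_{av}\mathcal{L}(v_h)).
\]
Here we use that $h$ is (taken as) a constant factor that commutes with $\pi_h$. The first term supplies the coercive part. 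The second term is a consistency error, controlled by \cref{eq_s0_second_arg_pih_res_bound} and \cref{eq_s1_second_arg_pih_res_bound}. The third term is handled by the residual estimates.

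\textbf{Coercive part.} For $s_1$ we note $\nabla v_h \cdot \nabla(\phi^2 v_h) = \phi^2|\nabla v_h|^2 + 2\phi v_h \nabla\phi\cdot\nabla v_h$. Hence
\[
s_1(y_h;v_h,\phi^2 v_h) \ge \|h^{1/2}|\beta|^{1/2}\varpi^{1/2}\nabla v_h\|_\phi^2 - 2\|h^{1/2}|\beta|^{1/2}\varpi^{1/2}\nabla v_h\|_\phi\, \|h^{1/2}|\beta|^{1/2} v_h\nabla\phi\|_\Omega ,
\]
and by \cref{eq_bnd_nabla_phi} the last factor is bounded by $C h K^{-1}|\beta|_\infty^{1/2}\|v_h\|_\phi$. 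A Young inequality turns this into an absorbable fraction of the $s_1$ energy plus $CK^{-2}|\beta|_\infty\|v_h\|_\phi^2$. For $s_0$, $\phi$ is continuous, so the analogue of \cref{eq_switch_arguments_within_s0} with the $(1-\varpi)$ weight gives $s_0(y_h;v_h,\phi^2 v_h)=s_0(y_h;\phi v_h,\phi v_h)$. (Strictly, $\nabla(\phi v_h)$ has no jump contribution from $\nabla\phi$.)

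We then combine $\sigma_0 s_0(y_h;\phi v_h,\phi v_h)+\sigma_1\|\cdot\|_\phi^2$ with \cref{eq_s_0_zero_weight_vs_s_sum_y_h}. Writing
\[
\sigma_0 s_0 + \sigma_1 s_1 \ge \tfrac{\min(\sigma_0,\sigma_1)}{2}(s_0+s_1) + \tfrac{\sigma_1}{2} s_1 ,
\]
we obtain both $C\min(\sigma_0,\sigma_1)\, s_0(0;v_h,\phi^2 v_h)$ and $C\sigma_1\|h^{1/2}|\beta|^{1/2}\varpi^{1/2}\nabla v_h\|_\phi^2$.

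\textbf{Consistency part.} Apply \cref{eq_s0_second_arg_pih_res_bound} and \cref{eq_s1_second_arg_pih_res_bound} followed by Young's inequality with a small parameter $\epsilon$. This produces $\epsilon\,(s_0(y_h;\phi v_h,\phi v_h)+s_1\text{-energy})$, which is absorbed into the coercive part, plus $C\epsilon^{-1}K^{-2}\max(\sigma_0,\sigma_1)|\beta|_\infty\|v_h\|_\phi^2$. In \cref{eq_s1_second_arg_pih_res_bound} the factor $\varpi$ (rather than $\varpi^{1/2}$) is harmless, since $\varpi\le\varpi^{1/2}$.

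\textbf{Residual part.} For $s_1$ we use \cref{eq_est_resid_in_second_arg_s1}, with $h^3/K^2\lesssim 1$ by the assumption $h^{1/2}/K<C$. This gives $-C\theta^{1/2}\sigma_1\|\cdot\|^2_\phi - C\theta^{3/2}\sigma_1|\beta|_\infty\|h^{1/2}\mathcal{L}v_h\|_\phi^2$.

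For $s_0$, \cref{eq_est_resid_in_second_arg_s0} is stated only for weight $0$, so $s_0(y_h;\cdot,\cdot)$ must be reduced to it. Since $0\le 1-\varpi\le1$, we repeat the Cauchy–Schwarz argument of that proof with the weight $(1-\varpi)$ inserted and drop the weight afterwards. This yields
\[
|s_0(y_h;v_h,\theta h\pi_h\phi^2 i_{av}\mathcal{L}v_h)| \lesssim \theta^{1/2}s_0(0;v_h,\phi^2v_h)+\theta^{3/2}|\beta|_\infty\|h^{1/2}\mathcal{L}v_h\|_\phi^2 ,
\]
and multiplying by $\sigma_0$ gives the $-\theta^{1/2}\sigma_0$ term in the statement.

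\textbf{Expected obstacle.} The delicate part is the bookkeeping in the coercive step. One must get $s_0(0;\cdot)$ with the factor $\min(\sigma_0,\sigma_1)$ while keeping the full factor $\sigma_1$ on the $s_1$ energy. At the same time the absorbed Young terms must not degrade either coefficient. This is what forces the splitting of the $s_1$ energy into two halves above, and the choice $\epsilon\sim\min(\sigma_0,\sigma_1)/\max(\sigma_0,\sigma_1)$ (folded into generic constants). A second subtlety is checking that the cross term $v_h\nabla\phi$ in $s_1(y_h;v_h,\phi^2v_h)$ carries enough powers of $h$ to land in the $K^{-2}\|v_h\|_\phi^2$ term.
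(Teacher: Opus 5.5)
Your proposal is correct and follows the paper's proof essentially step for step. It uses the same splitting of $w_h$, the product rule plus Young for $s_1(y_h;v_h,\phi^2v_h)$, \cref{eq_s0_second_arg_pih_res_bound} and \cref{eq_s1_second_arg_pih_res_bound} for the $\pi_h$ defects, \cref{eq_s_0_zero_weight_vs_s_sum_y_h} to recover $s_0(0;v_h,\phi^2 v_h)$, and \cref{eq_est_resid_in_second_arg_s0}, \cref{eq_est_resid_in_second_arg_s1} for the residual parts. Your rerun of the $s_0$ residual bound under the weight $(1-\varpi(y_h))\le 1$ fills a step the paper applies silently.

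Two minor remarks. First, no $\epsilon\sim\min(\sigma_0,\sigma_1)/\max(\sigma_0,\sigma_1)$ is needed: as the paper does, absorb each $\pi_h$ defect with a fixed $\epsilon$ into its own $\sigma$-matched coercive term ($\sigma_0 s_0(y_h;\phi v_h,\phi v_h)$, resp.\ $\sigma_1$ times the $s_1$ energy) before forming the $\min(\sigma_0,\sigma_1)$ combination. This keeps exactly $\max(\sigma_0,\sigma_1)$ in the $K^{-2}$ term. Second, since $\sigma=K\sqrt{h}$ one really has $|\nabla\phi|\lesssim K^{-1}h^{-1/2}\phi$, and the $h^{1/2}$ in \cref{eq_bnd_nabla_phi} is a typo. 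Hence the cross term is bounded by $CK^{-1}|\beta|_\infty^{1/2}\|v_h\|_\phi$ with no spare power of $h$, which still lands exactly in the $CK^{-2}|\beta|_\infty\|v_h\|_\phi^2$ term.
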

\begin{proof}
 We decompose the left hand side expression as follows
 \begin{align*}
  \sigma_0 s_0(y_h; v_h, w_h) + \sigma_1 s_1(y_h; v_h, w_h) = & \sigma_0 s_0(y_h; v_h, \pi_h \phi^2 v_h) + \sigma_1(y_h; v_h, \pi_h \phi^2 v_h)\\
  & + \sigma_0 s_0(y_h; v_h, \theta h \pi_h \phi^2 i_{av} \mathcal{L}(v_h)) \\
  & + \sigma_1 s_1(y_h; v_h, \theta h \pi_h \phi^2 i_{av} \mathcal{L}(v_h)) \\
  = & I + II + III + IV
 \end{align*}
 Starting with term $I$:
 \begin{align*}
  I = \sigma_0 s_0(y_h; v_h, \pi_h \phi^2 v_h) \geq \frac{\sigma_0}{2} s_0(y_h; v_h, \phi^2 v_h) - C \sigma_0 \frac{|\beta|_\infty}{K^2} \| v_h \|_\phi \textnormal{ by } \cref{eq_s0_second_arg_pih_res_bound}
 \end{align*}
 In relation to $II$, we observe
 \begin{align*}
  II &= \sigma_1 s_1(y_h; v_h, \pi_h \phi^2 v_h) = \sigma_1 s_1(y_h; v_h, \phi^2 v_h) - \sigma_1 s_1(y_h; v_h,\phi^2 v_h - \pi_h \phi^2 v_h)\\
  & \geq \sigma_1 s_1(y_h; v_h, \phi^2 v_h) - \sigma_1 \frac{\epsilon}{2} \| h^{1/2} |\beta|^{1/2} \varpi(y_h)^{1/2} \nabla v_h \|^2_\phi - C \sigma_1 \frac{|\beta|_\infty}{2 K^2 \epsilon} \| v_h \|_\phi^2 \textnormal{ by } \cref{eq_s1_second_arg_pih_res_bound}
 \end{align*}
 Following up in regards to $s_1(y_h; v_h, \phi^2 v_h)$, we observe that due to the product rule $\nabla (\phi^2 v_h) = \phi^2 \nabla v_h + 2 (\phi \nabla \phi) v_h$,
\begin{align*}
 s_1(y_h; v_h, \phi^2 v_h) &= (h |\beta| \varpi(y_h) \nabla v_h, \nabla (\phi^2 v_h))_\Omega \\
 &= \| h^{1/2} |\beta|^{1/2} \varpi(y_h)^{1/2} \nabla v_h\|^2_\phi + (h |\beta| \varpi(y_h) \nabla v_h, 2 \phi \nabla \phi v_h)_\Omega
\end{align*}
Recalling that $|\nabla \phi| \leq C K^{-1} h^{1/2} \phi$ (\Cref{eq_bnd_nabla_phi}), and $\varpi(y_h) \leq 1 \Rightarrow \varpi(y_h) \leq \varpi(y_h)^{1/2}$, by Cauchy-Schwarz and Young's inequality
\begin{align*}
 (h |\beta| \varpi(y_h) \nabla v_h, 2 \phi \nabla \phi v_h)_\Omega &= (h |\beta|^{1/2} \varpi(y_h) \phi \nabla v_h, 2 |\beta|^{1/2} \nabla \phi v_h)_\Omega\\
 &\leq \frac{1}{2} \| h^{1/2} |\beta|^{1/2} \varpi(y_h)^{ 1/2} \nabla v_h\|^2_\phi + \frac{C { h}}{K^2} { |\beta|_\infty} \| v_h \|_\phi^2
\end{align*}

This implies, together with the previous equation,
\begin{equation*}
 s_1(y_h; v_h, \phi^2 v_h) \geq \frac{1}{2} \| h^{1/2} |\beta|^{1/2} \varpi(y_h)^{1/2} \nabla v_h\|^2_\phi - \frac{C h}{K^2} | \beta|_\infty \| v_h \|_\phi^2
\end{equation*}
 Taking into consideration the start of the argument about $II$, we obtain
 \begin{equation*}
  II \geq \frac{\sigma_1}{4} \| h^{1/2} |\beta|^{1/2} \varpi(y_h)^{1/2} \nabla v_h\|^2_\phi - C \sigma_1 \frac{| \beta|_\infty}{K^2} \| v_h \|_\phi^2
 \end{equation*}
 Combining the estimates about $I$ and $II$, and applying \Cref{eq_s_0_zero_weight_vs_s_sum_y_h},
 \begin{align*}
  I + II \geq & ~ C \sigma_1 \| h^{1/2} |\beta|^{1/2} \varpi(y_h)^{1/2} \nabla v_h \|^2_\phi + C \min(\sigma_0, \sigma_1) s_0(0; v_h, \phi^2 v_h) \\
  & - \frac{C}{K^2} \max(\sigma_0, \sigma_1) |\beta|_\infty \| v_h \|_\phi^2
 \end{align*}
 In relation to $III$, we obtain by applying \Cref{eq_est_resid_in_second_arg_s0}
 \begin{align*}
  III = &~\sigma_0 s_0(y_h; v_h, \theta h \pi_h \phi^2 i_{av} \mathcal{L}(v_h)) \\
  \geq &~- C \sigma_0 \theta^{1/2} s_0 (0; v_h, \phi^2 v_h) - C \sigma_0 \theta^{3/2} |\beta|_\infty \| h^{1/2} \mathcal{L}(v_h)\|^2_\phi,
 \end{align*}
 and accordingly for $IV$, with \Cref{eq_est_resid_in_second_arg_s1},
 \begin{align*}
  IV = &~\sigma_1 \theta s_1(y_h; v_h, h \pi_h \phi^2 i_{av} \mathcal{L}(v_h)) \\
  \geq &~- C \sigma_1 \theta^{1/2} \|h^{1/2} |\beta|^{1/2} \varpi(y_h)^{1/2} \nabla v_h\|_\phi^2 - C \sigma_1 \theta^{3/2} |\beta|_\infty \| h^{1/2} \mathcal{L}(v_h)\|^2_\phi.
 \end{align*}
Combining these results yields the claim.
\end{proof}
\subsection{Stability}
The following property generalises the localised stability estimate towards the mixed stabilisation framework:
\begin{proposition}[Weighted stability] \label{prop:weight_stab}
Let $K>1$. Assume that $h^{\frac12}/K$ is sufficiently small. Then, there exists $\theta > 0$ sufficiently small such that for all $v_h \in C^1(0,T;V_h)$ there holds
\begin{alignat}{1}
\|v_h(\cdot,T)\|_\phi^2  & + C_\theta \int_0^T  |||v_h|||_{{y_h},S,\phi}^2
~\mbox{d}t   \leq C/K^2  \int_0^T \|v_h\|_\phi^2 ~\mbox{d}t +  \|v_h(\cdot,0)\|_\phi^2 \nonumber\\
&+ 2 \int_0^T (\mathcal{L} v_h, w_h)_\Omega + \sigma_0 s_0(y_h; v_h,w_h) + \sigma_1 s_1(y_h;v_h,w_h)~\mbox{d}t
\end{alignat}
where $w_h = \pi_h \phi^2 (v_h + \theta h\,i_{av}(\partial_t v_h + \beta \cdot \nabla v_h))$ and the constant $C \sim \max(\sigma_0, \sigma_1) + \sigma_0^{-1}$.
\end{proposition}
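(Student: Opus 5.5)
We split the test function as $w_h = w_h^0 + w_h^1$ with $w_h^0 := \pi_h \phi^2 v_h$ and $w_h^1 := \theta h\, \pi_h \phi^2 i_{av}\mathcal{L}(v_h)$. The right-hand side integrand then becomes $(\mathcal{L}v_h, w_h^0)_\Omega + (\mathcal{L}v_h, w_h^1)_\Omega + \sigma_0 s_0(y_h;v_h,w_h) + \sigma_1 s_1(y_h;v_h,w_h)$.

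\emph{Transport part, first piece.} For $(\mathcal{L}v_h,w_h^0)_\Omega$ we write $\pi_h\phi^2 v_h = \phi^2 v_h - (\phi^2 v_h - \pi_h\phi^2 v_h)$. Since $\beta$ is constant and divergence free, periodicity together with \cref{eq_phi_transport_L} gives
\[
(\mathcal{L}v_h,\phi^2 v_h)_\Omega = \tfrac12 \tfrac{d}{dt}\|v_h\|_\phi^2 .
\]
After integration in time and multiplication by 2, this yields the terms $\|v_h(\cdot,T)\|_\phi^2 - \|v_h(\cdot,0)\|_\phi^2$. The remainder $(\mathcal{L}v_h, \phi^2 v_h - \pi_h \phi^2 v_h)_\Omega$ is bounded with Cauchy--Schwarz in the weights $\phi$ and $\phi^{-1}$ and then \cref{eq_bnd_phih_res_volume}, which gives $\|h^{1/2}\mathcal{L}v_h\|_\phi\, K^{-1}\|v_h\|_\phi$. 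Young's inequality splits this into $\epsilon\|h^{1/2}\mathcal{L}v_h\|_\phi^2 + C\epsilon^{-1}K^{-2}\|v_h\|_\phi^2$.

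\emph{Transport part, second piece.} For $(\mathcal{L}v_h,w_h^1)_\Omega$ we follow the CIP proof behind \cref{eq_local_stab_CIP_only}. Replacing $\pi_h\phi^2$ by $\phi^2$ produces $\theta\|h^{1/2}\mathcal{L}v_h\|_\phi^2$, up to the super-approximation error, which is again handled by \cref{eq_bnd_phih_res_volume} and Young. The Oswald error $\theta(\mathcal{L}v_h, h\phi^2(i_{av}\mathcal{L}v_h - \mathcal{L}v_h))_\Omega$ is bounded via \cref{eq_Oswald_properties_local}. The jumps of $\mathcal{L}v_h$ are $\beta\cdot\jump{\nabla v_h}$, because $\partial_t v_h$ is continuous, so this error is controlled by $|\phi v_h|_s$. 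Thus
\[
\theta\bigl|(\mathcal{L}v_h, h\phi^2(i_{av}\mathcal{L}v_h - \mathcal{L}v_h))_\Omega\bigr| \lesssim \theta\,\|h^{1/2}\mathcal{L}v_h\|_\phi\, |\phi v_h|_s \le \tfrac{\theta}{2}\|h^{1/2}\mathcal{L}v_h\|_\phi^2 + C\theta\, s_0(0;v_h,\phi^2 v_h),
\]
using \cref{eq_switch_arguments_within_s0}. The net result is a lower bound $\tfrac{\theta}{2}\|h^{1/2}\mathcal{L}v_h\|_\phi^2 - C\theta\, s_0(0;v_h,\phi^2v_h) - CK^{-2}\|v_h\|_\phi^2$.

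\emph{Stabilisation part.} Here we invoke \cref{eq_est_stab_combination} directly. It gives the positive terms $C(\sigma_1-\theta^{1/2})\|h^{1/2}|\beta|^{1/2}\varpi(y_h)^{1/2}\nabla v_h\|_\phi^2$ and $C(\min(\sigma_0,\sigma_1)-\theta^{1/2}\sigma_0)\,s_0(0;v_h,\phi^2v_h)$. It also gives the negative terms $-CK^{-2}\max(\sigma_0,\sigma_1)\|v_h\|_\phi^2$ and $-C\theta^{3/2}\max(\sigma_0,\sigma_1)\|h^{1/2}\mathcal{L}v_h\|_\phi^2$.

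\emph{Choice of $\theta$.} This is the main obstacle: all the $\theta$-dependences have to be balanced at once. We need
\[
\theta^{3/2}\max(\sigma_0,\sigma_1) + \epsilon \ll \theta, \qquad \theta^{1/2}\sigma_0 + \theta \ll \min(\sigma_0,\sigma_1), \qquad \theta^{1/2} \ll \sigma_1 .
\]
This is achievable by taking $\epsilon = \theta/8$ and then $\theta$ small depending on $\sigma_0,\sigma_1$. The weakened Young constant $\epsilon^{-1}$ must only enter the $K^{-2}\|v_h\|_\phi^2$ term. Everything else is absorbed into the left-hand side.

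\emph{Assembling.} The surviving positive terms reconstruct the norm $|||v_h|||_{y_h,S,\phi}^2$. Specifically, $s_0(0;v_h,\phi^2v_h) = |\phi v_h|_s^2$ by \cref{eq_switch_arguments_within_s0}, and together with the $\mathcal{L}$-term and the $\varpi$-gradient term this is exactly $\|v_h\|_{y_h,S,\phi}^2$, with constant $C_\theta$. The $K^{-2}\|v_h\|_\phi^2$ terms are collected on the right-hand side with constant $C\sim \max(\sigma_0,\sigma_1) + \sigma_0^{-1}$. Throughout, we take $h^{1/2}/K$ small so that the factor $1+h^3/K^2$ and the constants in the weighted interpolation assumptions stay bounded.
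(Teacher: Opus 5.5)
Your argument is correct, but it is organised differently from the paper's.

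\emph{How the two routes differ.} The paper does not redo the transport part. It takes the CIP-only weighted estimate \cref{eq_local_stab_CIP_only} as a black box and treats everything else as corrections $\Delta_{R1}$--$\Delta_{R3}$. In particular, $\Delta_{R3}$ is the mixed stabilisation minus the full CIP term $\sigma_0 s_0(0;v_h,\pi_h\phi^2 v_h)$ that the black box has already used. You instead rebuild the transport part directly from three ingredients: the energy identity $(\mathcal{L}v_h,\phi^2 v_h)_\Omega=\tfrac12\tfrac{d}{dt}\|v_h\|_\phi^2$, super-approximation, and the Oswald bound. For the stabilisation part you use only \cref{eq_est_stab_combination}.

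\emph{What your route buys.} Because you never subtract $\sigma_0 s_0(0;\cdot,\cdot)$, the coefficient of $s_0(0;v_h,\phi^2v_h)$ in your bound is $c\min(\sigma_0,\sigma_1)-C\theta^{1/2}\sigma_0-C\theta$. This is positive for small $\theta$ with no relation between $\sigma_0$ and $\sigma_1$ required. The paper's bookkeeping ends instead with the coefficient $\min(\sigma_0,\sigma_1)-\theta^{1/2}\sigma_0-\sigma_0-\theta$, which, read literally, can never be nonnegative. You also account for the super-approximation error of $\pi_h\phi^2 i_{av}\mathcal{L}v_h$, which is of size $\theta (h^{1/2}/K)\|h^{1/2}\mathcal{L}v_h\|_\phi^2$. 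The paper's evaluation of $\Delta_{R2}$ passes over this error: since $\mathcal{L}v_h\notin V_h$, $L^2$-orthogonality alone does not remove $\pi_h$. Finally, your route never uses $L^2$-orthogonality of $\pi_h$, so it works whether $\pi_h$ is the Clément operator or the $L^2$ projection; the paper wavers between the two.

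\emph{What the paper's route buys.} It is modular: it reuses the plain CIP analysis verbatim instead of re-deriving the transport part.

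\emph{A caveat on your final paragraph.} You absorb $(\mathcal{L}v_h,\phi^2v_h-\pi_h\phi^2v_h)_\Omega$ using Young with $\epsilon=\theta/8$. This puts a factor $\theta^{-1}$ on the term $K^{-2}\int_0^T\|v_h\|_\phi^2\,\mathrm{d}t$. Your conditions also force $\theta\lesssim(\max(\sigma_0,\sigma_1)|\beta|_\infty)^{-2}$. So what you actually prove is $C\sim\theta^{-1}+\max(\sigma_0,\sigma_1)$, not the advertised $C\sim\max(\sigma_0,\sigma_1)+\sigma_0^{-1}$.

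If $\pi_h$ is the $L^2$ projection, you can get a better constant. Rewrite the remainder as $(\mathcal{L}v_h-i_{av}\mathcal{L}v_h,\phi^2v_h-\pi_h\phi^2v_h)_\Omega$, which is $\lesssim|\beta|_\infty^{1/2}|\phi v_h|_s K^{-1}\|v_h\|_\phi$, and apply Young against the $s_0$ term. This yields a factor $\min(\sigma_0,\sigma_1)^{-1}$ instead of $\theta^{-1}$.
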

\begin{proof}
  By and large, we want to apply \Cref{eq_local_stab_CIP_only} and discuss the additional terms. In order to relate respectively left-hand side and right-hand side of the statement here with \cref{eq_local_stab_CIP_only}, we symbolically denote this equation as $L_{\ref{eq_local_stab_CIP_only}} \lesssim R_{\ref{eq_local_stab_CIP_only}}$ and observe
  \begin{align*}
  \|v_h(\cdot,T)\|_\phi^2  & + C_\theta \int_0^T  |||v_h|||_{y_h,S,\phi}^2
  ~\mbox{d}t \lesssim \underbrace{ \| v_h (\cdot, T)\|^2_\phi + \sigma_0 \int_0^T | \phi v_h |^2_s \mathrm{d}t}_{= L_{\ref{eq_local_stab_CIP_only}}}\\
  &+ C_\theta \int_0^T \underbrace{\| h^{1/2} \mathcal{L}(v_h)\|_\phi^2}_{=: \Delta_{L1}} + \underbrace{\| h^{1/2} |\beta|^{1/2} \varpi(y_h)^{1/2} \nabla v_h \|_\phi^2}_{=: \Delta_{L2}} \mathrm{d}t
  \end{align*}
  Moreover, in relation to the right hand side of our statement, we note
  \begin{align*}
   & ~\frac{C}{K^2} \int_0^T \| v_h\|_\phi^2 \mathrm{d}t +  \| v_h( \cdot, 0)\|^2_\phi + 2 \int_0^T \left( (\mathcal{L} v_h, w_h)_\Omega + \sigma_0 s_0(0;v_h, w_h)\right) \mathrm{d}t \\
   = & ~R_{\ref{eq_local_stab_CIP_only}} + \underbrace{\frac{C - C_{\ref{eq_local_stab_CIP_only}}}{K^2} \int_0^T \| v_h\|_\phi^2 \mathrm{d}t}_{\Delta_{R1}} + 2 \int_0^T \underbrace{ (\mathcal{L} v_h, \theta h \pi_h \phi^2 i_{av} \mathcal{L}(v_h))_\Omega}_{\Delta_{R2}} \\
   & ~~~~~~ + \underbrace{\sigma_0 s_0(y_h; v_h,w_h) + \sigma_1 s_1(y_h;v_h,w_h) - \sigma_0 s_0(0; v_h, \pi_h \phi^2 v_h)}_{\Delta_{R3}} \mathrm{d}t
  \end{align*}
 Hence, in the light of \Cref{eq_local_stab_CIP_only}, it remains to show
 \begin{align*}
  C_\theta \int_0^T \Delta_{L1} + \Delta_{L2} \, \mathrm{d}t \leq \Delta_{R1} + 2 \int_{0}^T \Delta_{R2} + \Delta_{R3} \, \mathrm{d}t.
 \end{align*}
 It is instructive to evaluate $\Delta_{R2} = (\mathcal{L} v_h, \theta h \pi_h \phi^2 i_{av} \mathcal{L}(v_h))_\Omega $, noting that $\pi_h$ was defined as the $L^2$ interpolation,
 \begin{align*}
(\mathcal{L}(v_h), \theta h \pi_h \phi^2 i_{av} \mathcal{L}(v_h))_\Omega &= \|\theta^{1/2} h^{\frac12} \mathcal{L}(v_h)\|_\phi^2 + (\mathcal{L}(v_h), \theta \phi^2 h (i_{av} \beta \cdot \nabla v_h-  \beta \cdot \nabla v_h))_\Omega \\
&\ge \frac{\theta}{2} \|h^{\frac12} \mathcal{L}(v_h)\|_\phi^2  - \theta \|h^{1/2} (i_{av} \beta \cdot \nabla v_h-  \beta \cdot \nabla v_h)\|_\phi^{2\normalcolor}.
\end{align*}
Using now the weighted discrete interpolation bound, \Cref{eq_Oswald_properties_local}
\[
\theta \|h^{1/2} (i_{av} \beta \cdot \nabla v_h-  \beta \cdot \nabla v_h)\|_\phi^2 \leq \theta \sum_{F \in \mathcal{E}_i} h^2 |\beta| \cdot \|\phi \jump{\nabla v_h \cdot n_F} \|_F^2
\]
taking into consideration the definition of $s_0$, we see that
\[
\Delta_{R2} = (\mathcal{L} v_h, \theta h \pi_h \phi^2 i_{av} \mathcal{L} v_h)_\Omega \ge \frac{\theta}{2} \underbrace{\|h^{\frac12} \mathcal{L} v_h\|_\phi^2}_{\Delta_{L1}} - C \theta s_0(0; v_h, \phi^2 v_h) 
\]
Turning our attention to $\Delta_{R3}$, we apply \Cref{eq_est_stab_combination} and \Cref{eq_s0_second_arg_pih_res_bound} to obtain
\begin{align}
 \Delta_{R3} \geq & ~ C (\sigma_1 - \theta^{1/2}) \Delta_{L2} + C (\min(\sigma_0, \sigma_1) - \theta^{1/2} \sigma_0 - \sigma_0) s_0(0; v_h, \phi^2 v_h)  \nonumber \\
  & - \frac{C}{K^2} \max(\sigma_0, \sigma_1) |\beta|_\infty \| v_h \|_\phi^2 - C \theta^{3/2} \max(\sigma_0, \sigma_1) |\beta|_\infty \Delta_{L1} \nonumber
\end{align}
We conclude that
\begin{align*}
 & \Delta_{R1} + \int_0^T \Delta_{R2} + \Delta_{R3} \,\mathrm{d}x \\
 \geq & ~ C (\sigma_1 - \theta^{1/2}) \Delta_{L2} + C (\min(\sigma_0, \sigma_1) - \theta^{1/2} \sigma_0 - \sigma_0 - \theta) s_0(0; v_h, \phi^2 v_h)  \nonumber \\
  & + C (\theta - \theta^{3/2} \max(\sigma_0, \sigma_1) |\beta|_\infty )\Delta_{L1} \nonumber
\end{align*}
We observe that this implies stability for the choice of $\sigma_1 \gtrsim \sigma_0$, as we then we can find a $\theta$ sufficiently small so that $(\min(\sigma_0, \sigma_1) - \theta^{1/2} \sigma_0 - \sigma_0 - \theta) \geq 0$, $\sigma_1 - \theta^{1/2} \geq \sigma_1/2$, $(\theta - \theta^{3/2} \max(\sigma_0, \sigma_1) |\beta|_\infty ) \geq \frac{\theta}{2}$.
\end{proof}
\begin{lemma}[Galerkin Orthogonality]
 Denoting by $u$ the solution to the continuous problem \Cref{eq_cont_problem} and by $u_h$ the solution to the discrete problem \Cref{eq_discrete_problem}, it holds for all $v_h \in V_h$
 \begin{equation}
  0 = a(u - u_h, v_h) - \sigma_0 s_0 (u_h; u_h, v_h) - \sigma_1 s_1 (u_h; u_h, v_h) \label{eq_galerkin_orth}
 \end{equation}
\end{lemma}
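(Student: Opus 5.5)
The identity should follow from consistency of the continuous solution with respect to the Galerkin part of the method, combined with the discrete equation. I would subtract the discrete problem \cref{eq_discrete_problem} from the continuous weak formulation \cref{eq_cont_problem}, both tested with the same $v_h \in V_h$.

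First, since $V_h \subset H^1(\Omega)$ (continuous, periodically coupled functions), any $v_h \in V_h$ is an admissible test function in \cref{eq_cont_problem}. We therefore have $a(u, v_h) = (\mathcal{L}u, v_h)_\Omega = (f, v_h)_\Omega$ for almost every $t \in (0,T)$. This requires $u$ regular enough that $\mathcal{L}u \in L^2(\Omega)$; that is the standing assumption under which \cref{eq_cont_problem} is formulated. Note that no stabilisation terms acting on $u$ are needed. The method is only used in this "residual" form, with $s_0$ and $s_1$ evaluated on $u_h$ alone.

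Second, \cref{eq_discrete_problem} gives $a(u_h, v_h) + \sigma_0 s_0(u_h;u_h,v_h) + \sigma_1 s_1(u_h;u_h,v_h) = (f,v_h)_\Omega$. Subtracting this from the continuous identity and using bilinearity of $a$ (which holds since $\beta$ is constant and $\mathcal{L}$ is linear) yields
\begin{equation*}
a(u - u_h, v_h) - \sigma_0 s_0(u_h;u_h,v_h) - \sigma_1 s_1(u_h;u_h,v_h) = (f,v_h)_\Omega - (f,v_h)_\Omega = 0,
\end{equation*}
which is \cref{eq_galerkin_orth}.

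I expect no real obstacle here. The only points to be careful about are these:
\begin{itemize}
\item the right-hand side of \cref{eq_discrete_problem} must use the exact $f$, not $f_h$; $f_h$ enters only through $\varpi$ inside the stabilisation, which is kept on the $u_h$ side;
\item both identities must hold pointwise in time, so the argument should be stated for a.e.\ $t$, or for all $t$ if $u_h \in C^1(0,T;V_h)$ and $u$ is a classical solution.
\end{itemize}
If one instead wanted a symmetric consistency statement with $s_n(\cdot\,; u - u_h, v_h)$, one would have to check that $s_0(\cdot\,;u,v_h)=0$ for smooth $u$ (jumps of $\nabla u$ vanish). That would fail for $s_1$, which is why the lemma is formulated with the stabilisation acting on $u_h$ only.
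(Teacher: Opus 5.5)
Your proposal is correct and matches the paper's proof, which simply subtracts the discrete problem \cref{eq_discrete_problem} from the continuous weak form \cref{eq_cont_problem} tested with $v_h \in V_h$ and uses that both right-hand sides equal $(f,v_h)_\Omega$. Your added remarks are consistent with the paper's setup: $f_h$ enters only through $\varpi$, the identity holds for a.e.\ $t$, and the stabilisation is kept on $u_h$ alone, so no consistency of $s_1$ on $u$ is needed.
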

\begin{proof}
 We subtract \Cref{eq_cont_problem} and \Cref{eq_discrete_problem}, noting that the right-hand sides are identical.
\end{proof}
We now introduce additional temporally global norms of relevance for continuity: First, a variant of $\tripplenorm{v_h}_{w_h, S, \phi}$:
\begin{equation}
 \|v_h\|_{w_h, \phi}^2 := \|v(\cdot, T) \|_\phi^2 + \int_0^T  \| v_h \|_{w_h,S, \phi}^2 \mathrm{d}t.
\end{equation}
Furthermore, we define the stronger norm
\begin{equation}
 \| v_h \|_{\ast, \phi}^2 := \|v(\cdot, 0) \|_\phi^2 + \|v_h\|_{w_h, \phi}^2 +  \int_0^T  \| h^{-1/2} v_h \|_{\phi}^2 \mathrm{d}t.
\end{equation}
Then, it holds
\begin{lemma}[Continuity]
 Assume $v \in L^2(0,T; H^{3/2+ \epsilon}(\Omega)) \cap L^{\infty}(0,T; L^2(\Omega))$ with $\epsilon > 0$; then the following bound holds for all $v_h, w_h \in V_h$ and with $y_h(v_h) := \pi_h \phi^2 (v_h + \theta h i_{av} (\partial_t v_h + \beta \cdot \nabla v_h))$
 \begin{equation}
  \int_0^T a(\pi_h v, y_h) + \sigma_0 s_0(w_h; \pi_h v, y_h) + \sigma_1 s_1(w_h; \pi_h v, y_h) \lesssim \| \pi_h v \|_{\ast, \phi} \| y_h \|_{w_h, \phi}.
 \end{equation}
\end{lemma}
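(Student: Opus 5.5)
We bound the three contributions separately, writing $e := \pi_h v$ for brevity. The target is that every term pairs a quantity from $\|e\|_{\ast,\phi}$ with one from $\|y_h\|_{w_h,\phi}$. The guiding idea is the standard one for continuity of CIP-type methods in convection-dominated problems. The convective part is integrated by parts in time and space, so that the derivative falls on the test function $y_h$. The $L^2$-type part of $e$ is then measured with the negative weight $h^{-1/2}$, which is exactly why $\|h^{-1/2}e\|_\phi$ appears in the stronger norm $\|\cdot\|_{\ast,\phi}$.

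\textbf{The form $a$.} Since $\beta$ is constant and divergence free and the boundary conditions are periodic, we get
\begin{equation*}
\int_0^T(\partial_t e+\beta\cdot\nabla e,y_h)_\Omega\,\mathrm{d}t = (e(T),y_h(T))_\Omega-(e(0),y_h(0))_\Omega-\int_0^T(e,\mathcal{L}y_h)_\Omega\,\mathrm{d}t .
\end{equation*}
The endpoint terms are handled with the weights split as $\phi\cdot\phi^{-1}$. This gives $\|e(T)\|_\phi\|y_h(T)\|_{\phi^{-1}}$, and similarly at $t=0$. We then use $\|y_h\|_{\phi^{-1}}\lesssim\|\phi^{-1}\cdot\phi^2(\ldots)\|$, via the stability of $\pi_h$ and the super-approximation bound \cref{eq_bnd_phih_res_volume}, to return to a $\phi$-weighted quantity.

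Here a subtlety appears: the norms are written with $y_h$ as the argument, so we read $\|y_h\|_{w_h,\phi}$ literally as the norm of $y_h$. The plan is to bound the interior term directly by
\begin{equation*}
\int_0^T\|h^{-1/2}e\|_{\phi}\,\|h^{1/2}\mathcal{L}y_h\|_{\phi^{-1}}\,\mathrm{d}t .
\end{equation*}
Since $\mathcal{L}\phi=0$ by \cref{eq_phi_transport_L}, the weights commute with $\mathcal{L}$ up to commutators involving $\pi_h$. Those commutators are controlled by \cref{eq_bnd_phih_res_volume} together with an inverse inequality, at the price of factors $h^{1/2}/K$.

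\textbf{The stabilisation terms.} For $s_0(w_h;e,y_h)$ we apply Cauchy--Schwarz with the split weights $\phi$ and $\phi^{-1}$, exactly as in the proof of \cref{eq_s0_second_arg_pih_res_bound}, using $1-\varpi\le 1$. This yields $|\phi e|_s\,|\phi^{-1}y_h|_s$. The factor $|\phi e|_s$ is part of $\|e\|_{w_h,S,\phi}$. The factor $|\phi^{-1}y_h|_s$ is rewritten using \cref{eq_switch_arguments_within_s0} and \cref{eq_s_0_zero_weight_vs_s_sum_y_h}, and is then bounded by the $s$-part of the norm of $y_h$.

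Similarly, $s_1(w_h;e,y_h)$ is bounded by
\begin{equation*}
\|h^{1/2}|\beta|^{1/2}\varpi^{1/2}\nabla e\|_\phi\,\|h^{1/2}|\beta|^{1/2}\varpi^{1/2}\nabla y_h\|_{\phi^{-1}} .
\end{equation*}
The gradient of $\phi^{\pm1}$ that arises along the way is absorbed using \cref{eq_bnd_nabla_phi}.

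\textbf{Assembling and the main obstacle.} Finally we integrate in time, apply Cauchy--Schwarz in $t$, and sum the three contributions.

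The main obstacle is the mismatch of weights: the test function carries $\phi^{-1}$ where the norm $\|y_h\|_{w_h,\phi}$ carries $\phi$. My first attempt would be to exploit the structure $y_h=\pi_h\phi^2(\cdot)$, so that $\phi^{-1}y_h\approx\phi(\ldots)$. This reduces every $\phi^{-1}$-norm of $y_h$ to a $\phi$-norm of $v_h+\theta h\,i_{av}\mathcal{L}v_h$. These are in turn equivalent to the $\phi$-norms of $y_h$, modulo the super-approximation errors, which are $O(h^{1/2}/K)$ and hence absorbable for $h^{1/2}/K$ small.

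If this literal reading fails, the fallback is to interpret the weights as in the intended error analysis: place $\phi$ on $e$ and use the stability bounds \cref{eq_stab_l2_proj} and \cref{eq_Oswald_properties_local} to obtain $\|y_h\|\lesssim\|v_h\|$ in the corresponding weighted norms.
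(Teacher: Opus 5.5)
The gap is exactly where you locate the ``main obstacle''. Every Cauchy--Schwarz splitting you use puts $\phi^{-1}$ on the test function. What you actually produce are therefore $\|y_h(T)\|_{\phi^{-1}}$, $\|y_h(0)\|_{\phi^{-1}}$, $\|h^{1/2}\mathcal{L}y_h\|_{\phi^{-1}}$, $|\phi^{-1}y_h|_s$ and $\|h^{1/2}|\beta|^{1/2}\varpi^{1/2}\nabla y_h\|_{\phi^{-1}}$. Because $y_h\approx\phi^2(v_h+\theta h\,i_{av}\mathcal{L}v_h)$, these are $\phi$-weighted quantities of $v_h$, whereas every term of $\|y_h\|_{w_h,\phi}$ is a $\phi^3$-weighted one. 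Your assertion that the two are equivalent up to $O(h^{1/2}/K)$ super-approximation errors is false. The bound \eqref{eq_bnd_phih_res_volume} controls $\phi^2v_h-\pi_h\phi^2v_h$, but nothing recovers the missing factor $\phi^2$, which is exponentially small away from $\mathbf{x}_0$.

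In fact the bound with $\|y_h\|_{w_h,\phi}$ read literally cannot hold at all. Take $v$ smooth, vanishing for $t\le T-h/|\beta|$, with $v(T)\neq0$, and supported in a patch of $O(1)$ elements on which $\phi\approx\delta$ at the relevant times. This is possible because $\phi$ varies on the scale $\sigma=K\sqrt h\gg h$. Choose $v_h=\pi_hv$. The left-hand side is then precisely the integral in Proposition~\ref{prop:weight_stab}, with the roles of the letters $y_h$ and $w_h$ interchanged. Hence it is at least $\frac12\|v_h(T)\|_\phi^2-\frac{C}{2K^2}\int_0^T\|v_h\|_\phi^2\,\mathrm{d}t\gtrsim\delta^2$. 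On the other hand, $\|\pi_hv\|_{\ast,\phi}\lesssim\delta$ and $\|y_h\|_{w_h,\phi}\lesssim\delta^3$. Since $\delta$ can be as small as $e^{-c/(K\sqrt h)}$, no $h$-independent constant exists. Your first attempt is therefore not repairable, and the right-hand side has to be measured on $v_h$.

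Measuring on $v_h$ is your fallback, and it is what the paper effectively uses. Its proof only points to the Cauchy--Schwarz structure of \cite[Lemma 9]{burman2023someobservations}. Where the paper actually carries out these estimates (terms $I$--$III$ after \eqref{eq:pert}), everything is bounded by norms of $e_h$, the function from which the test function is built. You only name the fallback, however, and your sketch would break at three concrete points.

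(i) Integrating all of $y_h$ by parts produces $\mathcal{L}(\theta h\,\pi_h\phi^2 i_{av}\mathcal{L}v_h)$, hence $\partial_t^2v_h$, and also the pointwise value $h\,\mathcal{L}v_h(T)$; no norm of $v_h$ controls these. Only $\phi^2v_h$ should be integrated by parts, using $\mathcal{L}\phi=0$. The difference $\pi_h\phi^2v_h-\phi^2v_h$ is paired with $\|h^{1/2}\mathcal{L}e\|_\phi$ via \eqref{eq_bnd_phih_res_volume}. The $\theta$-part is bounded directly by $\theta\|h^{1/2}\mathcal{L}e\|_\phi\|h^{1/2}\mathcal{L}v_h\|_\phi$, using \eqref{eq_bnd_phih_res_volume} and \eqref{eq_Oswald_properties_local}, as is done for \eqref{eq:pert}.

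(ii) The endpoint term $(e(0),\phi^2v_h(0))_\Omega$ requires $\|v_h(0)\|_\phi$. This is contained neither in $\|y_h\|_{w_h,\phi}$ nor in $\|v_h\|_{w_h,\phi}$, since both only see $t=T$. You need $v_h(0)=0$, as in the error analysis, or an initial-time term on the right.

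(iii) The super-approximation and $\nabla\phi$ corrections yield terms $K^{-1}\|v_h\|_\phi$. They arise as in \eqref{eq_s0_second_arg_pih_res_bound} and \eqref{eq_s1_second_arg_pih_res_bound}, and from the $\pi_h\phi^2v_h-\phi^2v_h$ term in $a$. In a continuity bound such terms cannot be ``absorbed'', and $\|v_h\|_{w_h,\phi}$ has no $L^2(0,T;L^2_\phi)$ component. You must either add $K^{-1}(\int_0^T\|v_h\|_\phi^2\,\mathrm{d}t)^{1/2}$ to the right-hand side, as the error proof effectively does before removing it by Gronwall, or find a different argument.

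With these changes your stabilisation estimates go through. For example, $|\phi^{-1}y_h|_s\lesssim|\phi v_h|_s+K^{-1}\|v_h\|_\phi+\theta\|h^{1/2}\mathcal{L}v_h\|_\phi$ follows from \eqref{eq_bnd_phih_res_bnd} and the computation in the proof of \eqref{eq_est_resid_in_second_arg_s0}. The bound \eqref{eq_s_0_zero_weight_vs_s_sum_y_h} is not needed for this.
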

\begin{proof}
 The proof involves several applications of the Cauchy-Schwarz inequality and follows the structures in \cite[Lemma 9]{burman2023someobservations}.
\end{proof}
\subsection{A priori error bound}
We continue our numerical analysis with presenting a priori error bounds in a localised variant. As common in finite element a priori error bounds, the convergence behaviour, particularly order of convergence in $h$, depends on the regularity of the solution.

Our assumption on the regularity of $u$, the solution to the continuous problem, comes in two parts: One is global in domain, i.e. it concerns both rough and smooth parts of the solution. The other is restricted to what we call smooth part of the domain, i.e. a union of simplices of the mesh $\Omega_S(t)$, where $u(t)$ is sufficiently smooth, for all times $t\in [0,T]$:
\begin{assumption}[Regularity of $u$]
Let $u$ denote the solution to the continuous problem. We assume it satisfies the following regularity conditions:
\begin{enumerate}
 \item Globally, $u$ satisfies
 \begin{align}
  u \in L^\infty(0,T; L^\infty(\Omega)), \quad \mathrm{and} \quad \beta \cdot \nabla u \in L^2(0,T; L^1(\Omega)).
 \end{align}
 \item Locally, for each time $t \in [0,T]$, $u(t)$ satisfies on a union of simplicse of the mesh $\Omega_S(t) \subset \Omega$
 \begin{align}
  u(t) \in H^{k+1} (\Omega_S(t)) \cap W^{1,\infty} (\Omega_S(t)), \quad \textnormal{and} \quad \partial_t u (t) \in H^{k}(\Omega_S(t)).
 \end{align}
\end{enumerate}
\end{assumption}
For ease of notation, we will sometimes write $\Omega_S (t)$ as $\Omega_S$ if the evaluation time is clear by context.
Furthermore, let us introduce notation $\Omega_R(t)$ for the rough part of the solution, the complement of $\Omega_S$:
\begin{equation}
 \Omega_R (t) := \Omega \ \backslash \ \Omega_S(t) \quad t \in [0,T].
\end{equation}
In terms of the mathematical language of these domains, we can now describe in detail the weighting function (c.f. the paragraph around \Cref{eq:about_varphi}) which shall be used in the a priori bound / its relation to the domains $\Omega_S, \Omega_R$ of $u$: First, let us assume the bulk interior part of the smooth domain actually has the weight $\phi$ fully active, $\phi=1$. We define accordingly
\begin{equation}
 \Omega_{S,0}(t) := \{ x \in \Omega_S(t) \, | \, \phi(x,t) = 1 \}.
\end{equation}
In radial coordinates as used in the introduction of $\phi$, this corresponds to $r \leq r_0$.

We assume furthermore that the rough part of the solution shows an asymptotically deactivated weight:
\begin{assumption}
 We assume that there exists a constant $0< C<1$ so that
 \begin{equation}
  \phi(x,t) \leq C h^{k+d/2} \quad \forall x \in \Omega_R(t), \quad t \in [0,T]. \label{eq:phi_small_in_Omega_R}
 \end{equation}
\end{assumption}
In line with the suggested scaling in radial coordinates in the construction of $\phi$, this is possible if $\mathrm{dist}(\Omega_{S,0},\Omega_R) \leq C (k + d/2) h^{1/2} |\log(h)|$.

Furthermore, we define
\begin{equation}
 \Omega_S^- := \bigcup \{ T \in \mathcal{T}_h \, | \, T \subseteq \Omega_S \textnormal{ and } \overline{T} \cap \overline{\Omega_R} = \varnothing \}.
\end{equation}
By this construction, we peel off one boundary layer of $\Omega_S$. Accordingly, we define $\Omega_R^+ = \Omega \ \backslash \ \Omega_S^-$. These definitions are illustrated in \Cref{Omega_R_S_sketch}.

\begin{figure}
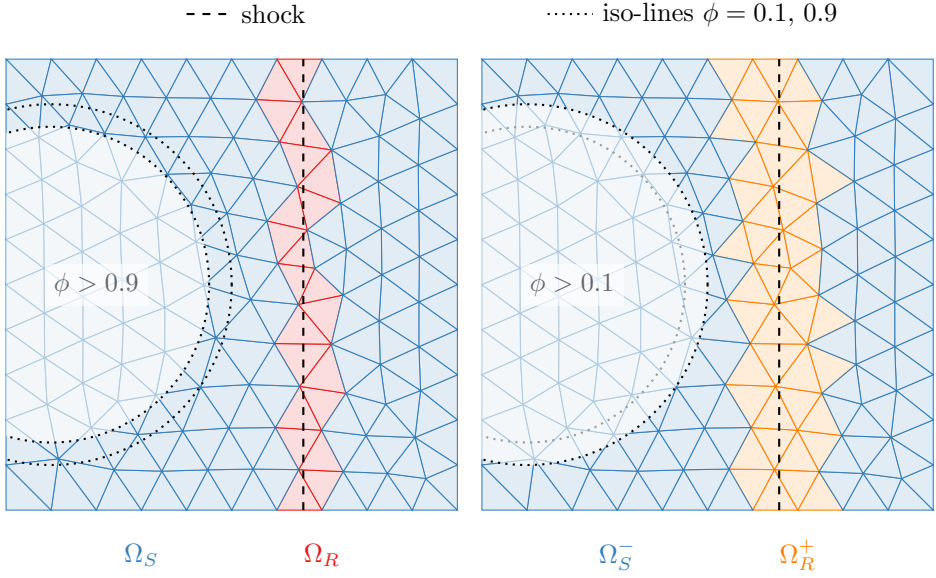

 \begin{center}
   \begin{tikzpicture}[ xscale=6, yscale=6]
   \begin{scope}[Omega_S/.style={fill=Set1-B!15!white, draw=Set1-B},
     Omega_R/.style={fill=Set1-A!15!white, draw=Set1-A},
      Omega_R_plu/.style={draw=none}, Omega_S_min/.style={draw=none}
]
     \input{seq_tex/tikz_mesh_065}
     \node[Set1-A]  at (0.7,-0.1) {$\Omega_R$};
     \node[Set1-B] at (0.3,-0.1) {$\Omega_S$};
     \draw[dashed, thick] (1./3+0.005*65,0) --  (1./3+0.005*65,1);
     \begin{scope}
        \clip (0,0) rectangle (1,1);
        \draw[thick, dotted, fill=white, fill opacity=0.6] (0.1,0.5) circle (0.35);
        \node[fill=white, fill opacity=.6]  at (0.2,0.5) {$\phi > 0.9$};
        \draw[thick, dotted] (0.1,0.5) circle (0.4);
     \end{scope}
   \end{scope}
   \begin{scope}[Omega_S/.style={draw=none},
     Omega_R/.style={draw=none},
      Omega_R_plu/.style={fill=Set1-E!15!white, draw=Set1-E}, Omega_S_min/.style={fill=Set1-B!15!white, draw=Set1-B},
      xshift = 30,
]
     \input{seq_tex/tikz_mesh_065}
     \node[Set1-E]  at (0.7,-0.1) {$\Omega_R^+$};
     \node[Set1-B] at (0.3,-0.1) {$\Omega_S^-$};
     \draw[dashed, thick] (1./3+0.005*65,0) --  (1./3+0.005*65,1);
     \begin{scope}
        \clip (0,0) rectangle (1,1);
        \draw[thick, dotted] (0.1,0.5) circle (0.35);
        \draw[thick, dotted, fill=white, fill opacity=0.6] (0.1,0.5) circle (0.4);
        \node[fill=white, fill opacity=.6]  at (0.2,0.5) {$\phi > 0.1$};
     \end{scope}
   \end{scope}
   \draw[thick, dashed] (0.4,1.1) to node[right, pos=1] {shock} (0.5,1.1);
   \draw[thick, dotted] (1.2,1.1) to node[right, pos=1] {iso-lines $\phi=0.1$, $0.9$} (1.3,1.1);
  \end{tikzpicture}
 \end{center}
 \caption{Sketch of the rough and smooth domain parts for the transported shock example at $t=0.325$, $\Omega_S$ and $\Omega_R$ (left). Additionally, the construction shifted by one element layer is shown, $\Omega_S^-$ and $\Omega_R^+$ (right). To illustrate a typical intended choice of a polar coordinates weight function $\phi$ satifying the decay assumptions, we show the iso-lines / regions of $\phi > 0.9$ and $\phi > 0.1$ within the smooth domain.}
 \label{Omega_R_S_sketch}
\end{figure}

We summarise a few standard discrete inequalities in the following lemma.
\begin{lemma}[Local inverse inequality for CIP stabilisation]
 For all $w_h \in W_h$, it holds
 \begin{equation}
  |\phi w_h |_s \lesssim h^{-1/2} |\beta|^{1/2}_\infty \| \phi w_h \|_\Omega \label{eq_weighted_s_bound}
 \end{equation}
\end{lemma}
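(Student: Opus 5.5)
Since $|\phi w_h|_s^2 = s_0(0;\phi w_h,\phi w_h)$, the plan is to reduce the estimate to a facet-by-facet trace and inverse inequality. Throughout, $\phi$ is smooth with $|\nabla\phi| \le C K^{-1} h^{1/2}\phi$ by \cref{eq_bnd_nabla_phi}, and it is quasi-constant on element patches once $h^{1/2}/K$ is small. By definition,
\[
|\phi w_h|_s^2 = \sum_{T\in\mathcal{T}_h} h_T^2\,\big\| |\beta|^{1/2}\jump{\nabla(\phi w_h)\cdot n_F}\big\|_{\partial T\setminus\partial\Omega}^2 .
\]
Because $\phi$ and $\nabla\phi$ are continuous across facets and $w_h$ is continuous, the product rule gives $\jump{\nabla(\phi w_h)\cdot n_F} = \phi\,\jump{\nabla w_h\cdot n_F}$. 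This is the same observation that underlies \cref{eq_switch_arguments_within_s0}. Bounding the jump by the sum of the one-sided traces then gives
\[
|\phi w_h|_s^2 \lesssim |\beta|_\infty \sum_{T} h_T^2\,\|\phi\nabla w_h|_T\|_{\partial T}^2 .
\]

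On each element I apply a local trace inequality to the piecewise polynomial $\nabla w_h|_T$, and I do it in a weighted form. Let $\bar\phi_T=\max_T\phi$ and $\underline\phi_T=\min_T\phi$. Since $\sigma=K h^{1/2}\gg h$, the bound \cref{eq_bnd_nabla_phi} yields $\bar\phi_T/\underline\phi_T \le \exp(C h^{1/2}/K) \le 2$, provided $h^{1/2}/K$ is small, as assumed throughout. The discrete trace inequality then gives
\[
\|\phi\nabla w_h\|_{\partial T}\le \bar\phi_T\|\nabla w_h\|_{\partial T}\lesssim \bar\phi_T h_T^{-1/2}\|\nabla w_h\|_T\lesssim h_T^{-1/2}\|\phi\nabla w_h\|_T .
\]
Next I use an inverse inequality, again weighted through the quasi-constancy of $\phi$ on $T$:
\[
\|\phi\nabla w_h\|_T\le\bar\phi_T\|\nabla w_h\|_T\lesssim \bar\phi_T h_T^{-1}\|w_h\|_T\lesssim h_T^{-1}\|\phi w_h\|_T .
\]

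Combining the two element bounds gives $h_T^2\|\phi\nabla w_h\|_{\partial T}^2\lesssim h_T^{-1}\|\phi w_h\|_T^2$. Summing over $T$ and using shape regularity then yields $|\phi w_h|_s^2\lesssim h^{-1}|\beta|_\infty\|\phi w_h\|_\Omega^2$, where $h$ here denotes the local mesh size, i.e.\ quasi-uniformity is needed if a single $h$ is meant. Taking square roots gives \cref{eq_weighted_s_bound}.

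The only delicate point is the quasi-constancy of $\phi$ on elements, which is what lets the weight move in and out of the unweighted trace and inverse inequalities. I would verify it by integrating $|\nabla\phi|\le C\sigma^{-1}\phi$ along segments of length at most $h_T$, which gives a Gronwall-type ratio bound $e^{Ch/\sigma}=e^{Ch^{1/2}/K}$. Everything else is standard.
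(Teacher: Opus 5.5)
Your argument is correct and follows the same route as the paper. Both proofs expand $|\phi w_h|_s$ by definition, reduce the jump to $\phi\jump{\nabla w_h\cdot n_F}$ using continuity of $\phi$, $\nabla\phi$ and $w_h$, and then apply a discrete trace inequality and an inverse inequality; your elementwise quasi-constancy bound $\max_T\phi/\min_T\phi\le e^{Ch/\sigma}=e^{Ch^{1/2}/K}$ and your remark that a single global $h$ needs quasi-uniformity supply details the paper leaves implicit in ``standard trace and discrete inequalities'' (the bound you quote from \cref{eq_bnd_nabla_phi} should read $|\nabla\phi|\le CK^{-1}h^{-1/2}\phi$, which is the version your ratio computation actually uses).
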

\begin{proof}
 By definition, we get using standard trace and discrete inequalities
 \begin{equation}
  | \phi w_h|_s \lesssim h |\beta|_\infty^{1/2} \| \phi \nabla w_h \|_{\mathcal{F}_h} \lesssim  h^{-1/2} |\beta|_\infty^{1/2} \| \phi w_h \|_\Omega.
 \end{equation}
\end{proof}

The main result about a priori error norms then reads as follows, for the two cases of $\alpha \in [1,2)$, and $\alpha \geq 2$:
\begin{theorem}[Weighted error estimate]
Assume that $\alpha \in [1,2)$ then if $e_h = u_h - \pi_h u$,
\begin{alignat*}{1}
\|e_h(\cdot,T)\|_\phi^2  & + C_\theta \int_0^T  |||e_h|||_{u_h,S,\phi}^2
~\mbox{d}t    \\
& \leq C h^{2k+1} \int_0^T (\|D^{k+1} u\|_{\Omega_S,\phi}^2 + \|\beta \cdot \nabla u\|_{L^1(\Omega)}^2 + \|u\|_{L^2(\Omega)}^2) ~\mbox{d}t\\
&  ~~~+ C (1 - \alpha/2) h^{\frac{2+\alpha}{2-\alpha}} \int_0^T \|\nabla u\|_{\infty,\Omega_S}^{\frac{4}{2-\alpha}} ~\mbox{d}t.
\end{alignat*}
Assume that $\alpha \ge 2$, and $h \|\nabla u(\cdot,t)\|_{\infty,\Omega_S}$ small enough, for all $t$, then
\begin{alignat*}{1}
\|e_h(\cdot,T)\|_\phi^2  & + C_\theta \int_0^T  |||e_h|||_{u_h,S,\phi}^2
~\mbox{d}t    \\
& \leq C h^{2k+1} \int_0^T (\|D^{k+1} u\|_{\Omega_S,\phi}^2 + \|\beta \cdot \nabla u\|_{L^1(\Omega)}^2 + \|u\|_{L^2(\Omega)}^2) ~\mbox{d}t
\end{alignat*}
\end{theorem}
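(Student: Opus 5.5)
We apply Proposition~\ref{prop:weight_stab} with $v_h = e_h = u_h - \pi_h u$ and the weight $y_h = u_h$. We may take $\pi_h u(0)$ as discrete initial data, so $\|e_h(\cdot,0)\|_\phi = 0$, and the left-hand side of the theorem is bounded by $C/K^2\int_0^T\|e_h\|_\phi^2\,\mathrm{d}t$ plus twice the time integral of $a(e_h,w_h)+\sigma_0 s_0(u_h;e_h,w_h)+\sigma_1 s_1(u_h;e_h,w_h)$, with $w_h=\pi_h\phi^2(e_h+\theta h\, i_{av}\mathcal{L}e_h)$. The $K^{-2}$ term is handled at the end by Gronwall's lemma.

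Galerkin orthogonality \eqref{eq_galerkin_orth} rewrites the integrand as
\[
a(u-\pi_h u, w_h) - \sigma_0 s_0(u_h;\pi_h u,w_h) - \sigma_1 s_1(u_h;\pi_h u,w_h),
\]
because $s_i(u_h;u_h,w_h)-s_i(u_h;e_h,w_h)=s_i(u_h;\pi_h u,w_h)$. We split every term into contributions from $\Omega_S^-$ and from $\Omega_R^+$.

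\emph{Convective term.} Integrating by parts gives $-(u-\pi_h u,\mathcal{L}w_h)$, since $\beta$ is constant and divergence free. We use $\mathcal{L}\phi=0$ from \eqref{eq_phi_transport_L} and the superapproximation bounds \eqref{eq_bnd_phih_res_volume}. The part of $\mathcal{L}w_h$ carrying $\mathcal{L}e_h$ is bounded by $h^{-1/2}\|u-\pi_h u\|_{\phi}$ times $\|h^{1/2}\mathcal{L}e_h\|_\phi$. On $\Omega_S^-$ this gives $h^{k+1/2}\|D^{k+1}u\|_{\Omega_S,\phi}$, which is absorbed by Young's inequality into $C_\theta|||e_h|||^2$.

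On $\Omega_R^+$ we instead use $u\in L^\infty$, $\beta\cdot\nabla u\in L^1$ and \eqref{eq:phi_small_in_Omega_R}. There $\phi\le Ch^{k+d/2}$, and an inverse inequality $\|w_h\|_\infty\lesssim h^{-d/2}\|w_h\|$ on the support gives a bound of size $h^{k}\|\beta\cdot\nabla u\|_{L^1}\|e_h\|_\phi$, or $h^{k}\|u\|\,\|e_h\|_\phi$. Combined with an extra $h^{1/2}$ coming from either the weight or Young's inequality, this should give the $h^{2k+1}(\|\beta\cdot\nabla u\|_{L^1}^2+\|u\|^2)$ terms. The required continuity-type estimates follow the Continuity lemma.

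\emph{Stabilisation terms on $\Omega_R^+$.} We apply Cauchy--Schwarz in $s_i(u_h;\cdot,\cdot)$ and the inverse inequality \eqref{eq_weighted_s_bound}. The weight smallness $\phi\lesssim h^{k+d/2}$ again dominates the $L^\infty$ bound on $\pi_h u$.

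\emph{Stabilisation terms on $\Omega_S^-$.} The $s_0$ part is standard. The jumps of $\nabla\pi_h u$ are approximation errors of order $h^k$, giving $|\phi\pi_h u|_{s}\lesssim h^{k+1/2}\|D^{k+1}u\|_{\Omega_S,\phi}$.

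The main obstacle is $s_1(u_h;\pi_h u,w_h)$ on $\Omega_S^-$. There $\nabla\pi_h u$ is $O(1)$, so consistency must come from smallness of $\varpi(u_h)$. After Cauchy--Schwarz against the $\varpi^{1/2}$-weighted gradient norm of $w_h$, we need to bound
\[
\|h^{1/2}\varpi(u_h)^{1/2}\nabla\pi_h u\|_\phi^2 \le h\|\nabla u\|_{\infty,\Omega_S}^2\int\varpi(u_h)\phi^2 .
\]
We have $\varpi(u_h)\le (hR_T(u_h)/U)^\alpha$. Write $R_T(u_h)\le R_T(e_h)+R_T(\pi_h u)$. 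The term $R_T(\pi_h u)$ is $O(h^{k-1})$-small on smooth elements, since $\mathcal{L}u=f$ and gradient jumps of the interpolant are small. The term $R_T(e_h)$ is bounded via inverse inequalities by $h^{-d/2}$ times local $L^2$ norms of the residual quantities of $e_h$, which are controlled by $|||e_h|||$. So the $s_1$ consistency term reduces to a product like $h\|\nabla u\|_\infty^2\, h^{\alpha}\|\cdot\|^{\alpha}$ of residual norms of $e_h$.

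For $\alpha<2$, Young's inequality with exponents $2/\alpha$ and $2/(2-\alpha)$ absorbs the $e_h$-part into $C_\theta|||e_h|||^2$. It leaves $(1-\alpha/2)\,h^{(2+\alpha)/(2-\alpha)}\|\nabla u\|_{\infty,\Omega_S}^{4/(2-\alpha)}$, and tracking the powers of $h$ carefully is the delicate point. For $\alpha\ge2$ the term is at least quadratic in $|||e_h|||$ with coefficient $h\|\nabla u\|_\infty$ (or its powers), so it is absorbed directly under the smallness assumption on $h\|\nabla u\|_{\infty,\Omega_S}$.

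Finally, we collect all terms, absorb the $e_h$ contributions into the left-hand side, and apply Gronwall's inequality in $T$ to the $K^{-2}\int\|e_h\|_\phi^2$ term.
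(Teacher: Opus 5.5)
Most of your proposal follows the paper's proof: the weighted stability proposition with $y_h=u_h$ and $e_h(\cdot,0)=0$, the consistency identity from Galerkin orthogonality, and the split into $\Omega_S^-$ and $\Omega_R^+$. The $s_1(u_h;\pi_h u,w_h)$ term on $\Omega_S^-$ is handled by Young's inequality with exponents $2/\alpha$ and $2/(2-\alpha)$; this is where the paper invokes Lemma~5 of \cite{burman2023someobservations}. For $\alpha\ge 2$ it is absorbed under smallness of $h\|\nabla u\|_{\infty,\Omega_S}$, and Gronwall closes the argument.

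\textbf{The gap: the convective consistency term.} You integrate by parts against the whole test function $w_h=\pi_h\phi^2(e_h+\theta h\, i_{av}\mathcal{L}e_h)$, so $\mathcal{L}$ also falls on $\theta h\,\pi_h\phi^2 i_{av}\mathcal{L}e_h$. This produces two terms:
\begin{itemize}
\item $\theta h\,\pi_h(\phi^2 i_{av}\,\partial_t\mathcal{L}e_h)$, i.e.\ second time derivatives of $e_h$;
\item an endpoint term $(u-\pi_h u,\theta h\,\pi_h\phi^2 i_{av}\mathcal{L}e_h)_\Omega$ at $t=T$, which needs $\mathcal{L}e_h(\cdot,T)$ pointwise in time.
\end{itemize}
Neither is controlled by $\|e_h(\cdot,T)\|_\phi^2+\int_0^T|||e_h|||_{u_h,S,\phi}^2\,\mathrm{d}t$. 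The superapproximation bound \eqref{eq_bnd_phih_res_volume} only concerns spatial derivatives. Your sentence about ``the part of $\mathcal{L}w_h$ carrying $\mathcal{L}e_h$'' simply drops this contribution. It could be rescued if $\pi_h$ were the $L^2$ projection, since then $(u-\pi_h u,z_h)_\Omega=(u-\pi_h u,\partial_t z_h)_\Omega=0$ for $V_h$-valued $z_h$. However, you do not invoke this, and it fails for the Cl\'ement operator the paper introduces.

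\textbf{How the paper avoids this.} It uses the splitting \eqref{eq:pert}. Only the piece $\phi^2 e_h$ is integrated by parts; using $\mathcal{L}\phi=0$ this yields $\|h^{-1/2}(u-\pi_h u)\|_\phi\,\|h^{1/2}\mathcal{L}e_h\|_\phi$. The pieces $\pi_h(\phi^2 e_h)-\phi^2 e_h$ and $\theta h\,\pi_h\phi^2 i_{av}\mathcal{L}e_h$ are paired directly with $\mathcal{L}(u-\pi_h u)$, via superapproximation and the weighted stability of $\pi_h$ and $i_{av}$. This brings $\|h^{1/2}\mathcal{L}(u-\pi_h u)\|_\phi$ into the estimate. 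That is why $\partial_t u\in H^k(\Omega_S)$ is assumed. It is also where $\|\beta\cdot\nabla u\|_{L^1}$ comes from: on $\Omega_R^+$ one writes $\partial_t\pi_h u=\pi_h(f-\beta\cdot\nabla u)$ and uses the $L^2$--$L^1$ inverse inequality together with $\phi\le Ch^{k+d/2}$.

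\textbf{Your rough-part estimate does not close.} A bound of the form $h^k\|\beta\cdot\nabla u\|_{L^1}\|e_h\|_\phi$ gives only $h^{2k}$ after Young's inequality. This is because only $\|e_h\|_\phi$ is controlled, not $h^{-1/2}\|e_h\|_\phi$. Once the weight has been used there is no spare $h^{1/2}$. In the paper that factor comes from the $h^{\pm1/2}$ pairings above:
\begin{itemize}
\item the superapproximation gain;
\item the explicit $h$ in the $\theta$-term;
\item integration by parts onto $\|h^{1/2}\mathcal{L}e_h\|_\phi$.
\end{itemize}
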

\begin{proof}
Applying the stability of \Cref{prop:weight_stab} yields, assuming initial data from the discrete space (else another equivalent summand of error appears)
\begin{alignat*}{1}
\|e_h(\cdot,T)\|_\phi^2  & + C_\theta \int_0^T  |||e_h|||_{u_h,S,\phi}^2
~\mbox{d}t    \leq C/K^2  \int_0^T \|e_h\|_\phi^2 ~\mbox{d}t 
 \nonumber\\
&+ 2 \int_0^T( (\mathcal{L} e_h, w_h)_\Omega + \gamma_0 s_0(u_h; e_h,w_h) + \gamma_1 s_1(u_h;e_h,w_h) )~\mbox{d}t 
\end{alignat*}
where $w_h = \pi_h \phi^2 (e_h + \theta h\,i_{av}\mathcal{L} e_h)$.
By Galerkin orthogonality, adding \Cref{eq_galerkin_orth}, we obtain
\begin{alignat*}{1}
\int_0^T&( (\mathcal{L} e_h, w_h)_\Omega + \gamma_0 s_0(u_h; e_h,w_h) + \gamma_1 s_1(u_h;e_h,w_h) )~\mbox{d}t \nonumber\\
&= \int_0^T( \underbrace{(\mathcal{L} (u - \pi_h u), w_h)_\Omega}_{I}  + \gamma_0 \underbrace{s_0(u_h; -\pi_h u,w_h)}_{II} + \gamma_1 \underbrace{s_1(u_h;-\pi_h u,w_h)}_{III} )~\mbox{d}t.
\end{alignat*}
First observe that for $I$ we may write
\begin{multline}\label{eq:pert}
(\mathcal{L} (u - \pi_h u), w_h)_\Omega = (\mathcal{L} (u - \pi_h u), \pi_h (\phi^2 e_h) - \phi^2 e_h)_\Omega+(\mathcal{L}(u - \pi_h u),\phi^2 e_h)_\Omega \\
+ (\mathcal{L} (u - \pi_h u), \pi_h \phi^2 \theta h\,i_{av}(\partial_t e_h + \beta \cdot \nabla e_h)).
\end{multline}
Using integration by parts in space and time and using that $\mathcal{L} \phi = 0$, \Cref{eq_phi_transport_L}, we see that in relation to the second summand on the right hand side,
\begin{align*}
 \left| \int_0^T (\mathcal{L}(u - \pi_h u),\phi^2 e_h)_\Omega \mathrm{d}t \right| &= \left| \int_0^T (u - \pi_h u, \phi^2 \mathcal{L}e_h)_\Omega \mathrm{d}t \right|\\& \leq \int_0^T \|h^{-\frac12} (u - \pi_h u)\|_\phi \|h^{\frac12} \mathcal{L}e_h \|_\phi \mathrm{d}t.
\end{align*}
For the first term of the right hand side of \Cref{eq:pert} we see that
\[
(\mathcal{L} (u - \pi_h u), \pi_h (\phi^2 e_h) - \phi^2 e_h)_\Omega \leq \|h^{\frac12} \mathcal{L} (u - \pi_h u) \phi\|_\Omega \|h^{-\frac12} (\pi_h (\phi^2 e_h) - \phi^2 e_h)\|_{\phi^{-1}}.
\]
Using now the super approximation estimate, \Cref{eq_bnd_phih_res_volume},
\[
\|h^{-\frac12}(\pi_h (\phi^2 e_h) - \phi^2 e_h)\|_{\phi^{-1}} \leq C K^{-1} \|e_h\|_\phi,
\]
we conclude that
\begin{align*}
&  \int_0^T |(\mathcal{L}(u - \pi_h u),\pi_h (\phi^2 e_h) )_\Omega|\\
\leq & \frac{C}{\epsilon}( { \|h^{-\frac12} (u - \pi_h u)\|_\phi^2 }
+ \|h^{\frac12} \mathcal{L} (u - \pi_h u) \|_\phi^2+K^{-2} \|e_h\|^2_\phi) +  \epsilon {\theta} \|h^{\frac12} \mathcal{L}e_h \|_\phi^2.
\end{align*}
Using the $L^2$-stability of $\pi_h$, by definition, and $i_{av}$, by \Cref{eq_Oswald_properties_local}, we have for the remaining summand of \Cref{eq:pert}
\[
(\mathcal{L} (u - \pi_h u),\pi_h \phi^2 \theta h\,i_{av}(\mathcal{L} e_h ))_\Omega \leq  C\frac{1}{\epsilon} \theta \|h^{\frac12} \mathcal{L} (u - \pi_h u) \|_\phi^2+\epsilon \theta \|h^{\frac12} \mathcal{L}e_h \|_\phi^2.
\]
It follows that term $I$ is controlled for $\epsilon$ small enough (but depending only on the mesh geometry).
For term $II$ we see that using the Cauchy-Schwarz inequality, the properties of $\phi$ and a standard trace inequality 
\[
II \leq \frac{1}{\epsilon} |\pi_h u|_{s,\phi}^2 + \epsilon { K^{-2} \|e_h\|^2_\phi} + \epsilon \|e_h\|_{R,\phi}^2. 
\]
For the last term we see similarly that
\[
III \leq \frac{1}{\epsilon} (\|h^{\frac12} \varpi(u_h) \nabla \pi_h u \phi\|^2 + { K^{-2} \|e_h\|^2_\phi}) + \epsilon \|h^{\frac12} \varpi(u_h) \nabla e_h \|_\phi^2.
\]
Collecting the above bounds for the terms $I$-$III$ and choosing $\epsilon$ small enough we see that
\begin{alignat}{1}
\|e_h(\cdot,T)\|_\phi^2  & + \gamma \int_0^T  |||e_h|||_{u_h,S,\phi}^2
~\mbox{d}t    \leq C/K^2  \int_0^T \|e_h\|_\phi^2 ~\mbox{d}t 
 \nonumber\\
&+ C \int_0^T(\|h^{\frac12} \mathcal{L} (u - \pi_h u) \|_\phi^2+ |\pi_h u|_{s,\phi}^2 + \|h^{\frac12} \varpi(u_h) \nabla \pi_h u \|_\phi^2 )~\mbox{d}t. \label{eq_large_step_in_error_bnd} \\
& + C \int_0^T { \|h^{-\frac12} (u - \pi_h u)\|_\phi^2 } \nonumber
\end{alignat}
To conclude we need to bound the second and third integral in the right hand side respecting the different regularities in the subdomains $\Omega_S$ and $\Omega_R$.
For the first term we observe that
\[
\|h^{\frac12} \mathcal{L} (u - \pi_h u) \|_\phi^2 \leq \|h^{\frac12} \mathcal{L} (u - \pi_h u) \|_{\Omega_S^-,\phi}^2 + \|h^{\frac12} \mathcal{L} (u - \pi_h u) \|_{\Omega_R^+,\phi}^2.
\]
The integral over the smooth part is handled using standard interpolation estimates
\[
\|h^{\frac12} \mathcal{L} (u - \pi_h u) \|_{\Omega_S^-,\phi}^2 \leq C h^{2k+1} \|D^{k+1} u\|_{\Omega_S,\phi}^2.
\]
Using now that $\mathcal{L}u = f$ and $\|\partial_t \pi_h u\|_\phi = \|\pi_h \partial_t u\|_\phi = \| \pi_h (f -  \beta \cdot \nabla u) \|_\phi$ we see that
\[
\|h^{\frac12} \mathcal{L} (u - \pi_h u) \|_{\Omega_R^+,\phi}^2 \leq
C (\|h^{\frac12} f \|_{\Omega_R^+,\phi}^2 + \|h^{\frac12} \pi_h \beta \cdot \nabla u\|_{\Omega_R^+,\phi}^2 + \|h^{\frac12}  \beta \cdot \nabla \pi_h u\|_{\Omega_R^+,\phi}^2).
\]
Applying the inverse inequality $\|u_h\|_{L^2(\Omega)} \leq C h^{-d/2} \|u_h\|_{L^1(\Omega)}$ and the bound on $\phi$ in $\Omega_R$, in the right hand side we see that
\[
\|h^{\frac12} \pi_h \beta \cdot \nabla u\|_{\Omega_R^+,\phi}^2 \leq C h^{2k+1} \| \pi_h \beta \cdot \nabla u\|_{L^1(\Omega_R^+)}^2 \leq C h^{2k+1} \|\beta \cdot \nabla u\|_{L^1(\Omega_R^+)}^2
\]
and using a standard inverse inequality and $L^2$- stability of $\pi_h$
\[
\|h^{\frac12}  \beta \cdot \nabla \pi_h u\|_{\Omega_R^+,\phi}^2 \leq C h^{2k+1} \|u\|_{\Omega_R^+}^2.
\]
As it relates to the last summand in \Cref{eq_large_step_in_error_bnd}, we argue similarly:
\begin{equation}
 { \|h^{-\frac12} (u - \pi_h u)\|_\phi^2 } \lesssim \|h^{-\frac12} (u - \pi_h u)\|_{\Omega_S^-, \phi}^2 + \|h^{-\frac12} (u - \pi_h u)\|_{\Omega_R^+, \phi}^2
\end{equation}
Again, by standard interpolation error estimates,
\begin{equation}
 \|h^{-\frac12} (u - \pi_h u)\|_{\Omega_S^-, \phi}^2 \lesssim h^{2k+1} \|D^{k+1} u\|_{\Omega_S,\phi}^2.
\end{equation}
On the other side, by \cref{eq:phi_small_in_Omega_R},
\begin{equation}
 \|h^{-\frac12} (u - \pi_h u)\|_{\Omega_R^+, \phi}^2 \lesssim h^{-1} |\phi|_{\infty, \Omega_R^+}^2 \| u \|_{\Omega_R^+}^2 \lesssim h^{2k-1} \| u \|_{\Omega_R^+}^2 
\end{equation}
For the gradient jump term we obtain similarly using approximation in $\Omega_S^-$ and \cref{eq_weighted_s_bound} in $\Omega_R^+$ and the bound on $\phi$, \cref{eq:phi_small_in_Omega_R},
\begin{align}\label{eq:1st_rough}
|\pi_h u|_{s,\phi}^2 &\leq C h^{2k+1} \|D^{k+1} u\|_{\Omega_S,\phi}^2 + C \|h^{-\frac12} \pi_h u\|_{\Omega_R^+,\phi}^2 \\
&\leq C h^{2k+1} \|D^{k+1} u\|_{\Omega_S,\phi}^2 + C h^{2k + 1} \|u\|_{\Omega_R^+}^2.
\end{align}
Finally, considering the term stemming from the nonlinear stabilization we have to proceed with some care. As before we divide it in $\Omega_S^-$ and $\Omega_R^+$. 
\[
\|h^{\frac12} \varpi(u_h) \nabla \pi_h u \|_\phi^2 \leq \|h^{\frac12} \varpi(u_h) \nabla \pi_h u \|_{\Omega_S^-,\phi}^2 + \|h^{\frac12} \varpi(u_h) \nabla \pi_h u \|_{\Omega_R^+,\phi}^2
\]
Starting with the second term we see that using an inverse inequality and arguments similar to those of \eqref{eq:1st_rough},
\[
\|h^{\frac12} \varpi(u_h) \nabla \pi_h u \|_{\Omega_R^+,\phi}^2 \leq C h^{2k + 1} \|u\|_{\Omega_R^+}^2.
\]
The second term must be handled differently depending on the parameter $\alpha$ in $\varpi(u_h)$.
First assume that $\alpha \in [1,2)$. Then applying \cite[Lemma 5]{burman2023someobservations} and the properties of $\phi$ we have
\begin{multline*}
\|h^{\frac12} \varpi(u_h) \nabla \pi_h u \|_{\Omega_S^-,\phi}^2 \leq \epsilon \|e_h\|_{R,\phi}^2 + \epsilon \|u - \pi_h u\|_{R,\Omega_S^-,\phi}^2 + C (1 - \alpha/2) h^{\frac{2+\alpha}{2-\alpha}} \|\nabla \pi_h u\|_{\infty,\Omega_S^-}^{\frac{4}{2-\alpha}} \\
\leq \epsilon \|e_h\|_{R,\phi}^2 + C h^{2k+1} \|D^{k+1} u\|_{\Omega_S^-,\phi}^2 + C (1 - \alpha/2) h^{\frac{2+\alpha}{2-\alpha}} \|\nabla u\|_{\infty,\Omega_S}^{\frac{4}{2-\alpha}}.
\end{multline*}
The estimate for $\alpha \in [1,2)$ now follows after collecting terms and applying Gronwall's lemma.

If on the other hand $\alpha \ge 2$ we may use that $\varpi(u_h)\vert_T \leq h^2 R_T^2$ to obtain 
\[
\|h^{\frac12} \varpi(u_h) \nabla \pi_h u \|_{\Omega_S^-,\phi}^2 \leq C h^2 \|\nabla u\|_{\infty,\Omega_S}^2 (\|e_h\|_{R,\phi}^2 + \|u - \pi_h u\|_{R,\Omega_S^-,\phi}^2 ).
\]
Assuming that $h$ is small enough so that $C h^2 \|\nabla u\|_{\infty,\Omega_S}^2  \leq \frac12 C_\theta$ we see that
\[
\|h^{\frac12} \varpi(u_h) \nabla \pi_h u \|_{\Omega_S^-,\phi}^2 \leq\frac12 C_\theta (\|e_h\|_{R,\phi}^2 + \|u - \pi_h u\|_{R,\Omega_S^-,\phi}^2 ).
\]
\end{proof}
\subsection*{Acknowledgements}
The authors acknowledge support by EPSRC under the grant  EP/X042650/1. FH was supported by the Swedish Research Council under grant no. 2021-06594 while in residence at Institut Mittag-Leffler in Djursholm, Sweden during the fall semester of 2025.
\bibliographystyle{siamplain}
\bibliography{paper}

\begin{thebibliography}{10}

\bibitem{brennerscott}
{\sc S.~C. Brenner and L.~R. Scott}, {\em The mathematical theory of finite
  element methods}, Springer, 2008,
  \url{https://doi.org/10.1007/978-0-387-75934-0}.

\bibitem{BROOKS1982199}
{\sc A.~N. Brooks and T.~J. Hughes}, {\em Streamline upwind/petrov-galerkin
  formulations for convection dominated flows with particular emphasis on the
  incompressible navier-stokes equations}, Computer Methods in Applied
  Mechanics and Engineering, 32 (1982), pp.~199--259,
  \url{https://doi.org/https://doi.org/10.1016/0045-7825(82)90071-8},
  \url{https://www.sciencedirect.com/science/article/pii/0045782582900718}.

\bibitem{burman2022weighted}
{\sc E.~Burman}, {\em Weighted error estimates for transient transport problems
  discretized using continuous finite elements with interior penalty
  stabilization on the gradient jumps}, Vietnam Journal of Mathematics, 50
  (2022), pp.~833--866, \url{https://doi.org/10.1007/s10013-022-00550-x}.

\bibitem{burman2023someobservations}
{\sc E.~Burman}, {\em Some observations on the interaction between linear and
  nonlinear stabilization for continuous finite element methods applied to
  hyperbolic conservation laws}, SIAM Journal on Scientific Computing, 45
  (2023), pp.~A96--A122, \url{https://doi.org/10.1137/21M1464154}.

\bibitem{M2AN_2007__41_1_55_0}
{\sc E.~Burman and A.~Ern}, {\em A continuous finite element method with face
  penalty to approximate {Friedrichs'} systems}, ESAIM: Mod\'elisation
  math\'ematique et analyse num\'erique, 41 (2007), pp.~55--76,
  \url{https://doi.org/10.1051/m2an:2007007},
  \url{https://www.numdam.org/articles/10.1051/m2an:2007007/}.

\bibitem{burman2007continuous}
{\sc E.~Burman and A.~Ern}, {\em Continuous interior penalty hp-finite element
  methods for advection and advection-diffusion equations}, Mathematics of
  Computation, 76 (2007), pp.~1119--1140.

\bibitem{10.1093/imanum/drn001}
{\sc E.~Burman, J.~Guzmán, and D.~Leykekhman}, {\em Weighted error estimates
  of the continuous interior penalty method for singularly perturbed problems},
  IMA Journal of Numerical Analysis, 29 (2008), pp.~284--314,
  \url{https://doi.org/10.1093/imanum/drn001},
  \url{https://doi.org/10.1093/imanum/drn001},
  \url{https://arxiv.org/abs/https://academic.oup.com/imajna/article-pdf/29/2/284/1930247/drn001.pdf}.

\bibitem{BURMAN20041437}
{\sc E.~Burman and P.~Hansbo}, {\em Edge stabilization for galerkin
  approximations of convection–diffusion–reaction problems}, Computer
  Methods in Applied Mechanics and Engineering, 193 (2004), pp.~1437--1453,
  \url{https://doi.org/https://doi.org/10.1016/j.cma.2003.12.032},
  \url{https://www.sciencedirect.com/science/article/pii/S004578250400043X}.
\newblock Recent Advances in Stabilized and Multiscale Finite Element Methods.

\bibitem{di2011mathematical}
{\sc D.~A. Di~Pietro and A.~Ern}, {\em Mathematical aspects of discontinuous
  Galerkin methods}, vol.~69, Springer Science \& Business Media, 2011,
  \url{https://doi.org/10.1007/978-3-642-22980-0}.

\bibitem{10.1007/BFb0120591}
{\sc J.~Douglas and T.~Dupont}, {\em Interior penalty procedures for elliptic
  and parabolic galerkin methods}, in Computing Methods in Applied Sciences,
  R.~Glowinski and J.~L. Lions, eds., Berlin, Heidelberg, 1976, Springer Berlin
  Heidelberg, pp.~207--216.

\bibitem{doi:10.1137/120867482}
{\sc A.~Ern and J.-L. Guermond}, {\em Weighting the edge stabilization}, SIAM
  Journal on Numerical Analysis, 51 (2013), pp.~1655--1677,
  \url{https://doi.org/10.1137/120867482},
  \url{https://doi.org/10.1137/120867482},
  \url{https://arxiv.org/abs/https://doi.org/10.1137/120867482}.

\bibitem{ern2021finite}
{\sc A.~Ern and J.-L. Guermond}, {\em Finite Elements I}, Springer Cham, 2021,
  \url{https://doi.org/10.1007/978-3-030-56341-7}.

\bibitem{ern2021finite3}
{\sc A.~Ern and J.-L. Guermond}, {\em Finite Elements III}, Springer Cham,
  2021, \url{https://doi.org/10.1007/978-3-030-57348-5}.

\bibitem{hughes2012finite}
{\sc T.~J. Hughes}, {\em The finite element method: linear static and dynamic
  finite element analysis}, Courier Corporation, 2012.

\end{thebibliography}
\end{document}